\documentclass[12pt]{article}
\usepackage{amsmath}
\usepackage{amsfonts}
\usepackage{amssymb}
\usepackage{amscd}
\usepackage{amsthm}
\usepackage{bbm}
\usepackage{color}      
\usepackage[linktocpage=true,plainpages=false,pdfpagelabels=false]{hyperref}

\input xypic

\usepackage[matrix,arrow,curve]{xy}

\sloppy
\oddsidemargin=1pt
\textwidth=450pt \textheight=640pt
\topmargin=-12mm

\newcommand{\Z}{{\mathbb Z}}
\newcommand{\Hb}{{\mathbb H}}

\newcommand{\N}{{\mathbb N}}
\newcommand{\Q}{{\mathbb Q}}

\newcommand{\C}{{\mathbb C}}

\newcommand{\Hc}{{\mathcal H}}

\newcommand{\res}{\mathop{\rm res}}       

\newcommand{\LL}{\mathcal L}

\newcommand{\Ab}{{\mathbb A}}
\renewcommand{\a}{\mathfrak{a}}

\newcommand{\p}{\mathfrak{p}}

\newcommand{\OO}{{\mathcal O}}

\newcommand{\Map}{{\rm Map}}

\newcommand{\Hom}{{\rm Hom}}

\newcommand{\Ker}{{\rm Ker}}


\newcommand{\Spec}{{\rm Spec} }

\newcommand{\sgn}{{\rm sgn}}

\newcommand{\M}{{\rm M}}
\newcommand{\Spf}{{\rm Spf} }

\newcommand{\gm}{\mathbb G_{m}}
\newcommand{\ga}{\mathbb G_{a}}


\newcommand{\z}{{\mathbb{Z}}}

\newcommand{\lrto}{\longrightarrow}



\newcommand{\uz}{\underline{\mathbb{Z}}}


\theoremstyle{plain}
\newtheorem{theor}{Theorem}[section]
\newtheorem{prop}[theor]{Proposition}

\newtheorem{corol}[theor]{Corollary}
\newtheorem{lemma}[theor]{Lemma}

\theoremstyle{remark}
\newtheorem{rmk}[theor]{Remark}

\theoremstyle{definition}
\newtheorem{defin}[theor]{Definition}
\newtheorem{defin-prop}[theor]{Definition-Proposition}

\numberwithin{equation}{section}

\newcommand{\supp}{{\rm supp}}

\newcommand{\quash}[1]{}

\title{Continuous homomorphisms between algebras of iterated Laurent series over a ring\footnotetext{This work is supported by the RSF under a grant 14-50-00005.}}
\author{Sergey Gorchinskiy and Denis Osipov}
\date{}

\begin{document}
\maketitle

\begin{abstract}
We study continuous homomorphisms between algebras of iterated Laurent series over a commutative ring. We give a full description of such homomorphisms in terms of a discrete data determined by the images of parameters. In similar terms, we give a criterion of invertibility of an endomorphism and provide an explicit formula for the inverse endomorphism. We study the behavior of the higher-dimensional residue under continuous homomorphisms.
\end{abstract}

\tableofcontents

\section{Introduction}

In this paper, we study continuous homomorphisms between algebras of iterated Laurent series in many variables over a commutative ring. Let us start with a previously known case of one variable. Let $A$ be a commutative ring and let $A((t))$ be the ring of Laurent series over $A$. For simplicity, assume in the introduction that $A$ does not decompose into a product of two rings. Then for any invertible Laurent series
$$
\varphi =\mbox{$ \sum\limits_{l \in \z} a_l t^l $} \in A((t))^*\,,
$$
there is an integer $\nu(\varphi) \in \z$ such that the coefficient $a_{\nu(\varphi)}$ is invertible and for all~${l<\nu(\varphi)}$, the coefficients $a_l$ are nilpotent (see more explanation and references in Section~\ref{prelim}).

If $\nu(\varphi) > 0$, then we have a well-defined continuous endomorphism of the \mbox{$A$-algebra}
\begin{equation}\label{map}
A((t))\longrightarrow A((t))\,,\qquad  \mbox{$f=\sum\limits_{l \in \Z} b_lt^l \longmapsto f(\varphi)=\sum\limits_{l \in \Z}  b_l \varphi^l  $}\,,
\end{equation}
where we consider the natural topology on~$A((t))$ with the base of open neighborhoods of zero given by $A$-submodules $t^iA[[t]]$,~$i\in\Z$. The series $\sum\limits_{l \in \Z}  b_l \varphi^l$ converges in this topology. It is not hard to check that all continuous endomorphisms of the \mbox{$A$-algebra}~$A((t))$ have this form. Besides, Morava (after Kapranov) in~\cite[\S\,1.2]{Mo} and Mu\~{n}oz Porras and Plaza Mart\'in in~\cite[Theor.\,3.3]{MPM} have independently obtained a non-trivial statement that the endomorphism in formula~\eqref{map} is invertible if and only if~$\nu(\varphi)=1$.

Note that the nilpotent coefficients $a_l$ of $\varphi$ with $l < \nu(\varphi)$ play an important role. Even in the case of continuous automorphisms of the $A$-algebra $A[[t]]$, where we have $\nu(\varphi)=1$ and $a_l=0 $ for $l <0$, one needs to consider the nilpotent coefficient $a_0$ in order to obtain the algebra of all derivations (or vector fields) on the formal disk~$\Spf\big(\C[[t]]\big)$ as the Lie algebra of the group of automorphisms, see~\cite[Ch.\,6.2]{FB}.

\medskip

We generalize the above facts to the case of continuous homomorphisms~$A((t_1))\ldots((t_n))\to A((t_1))\ldots((t_m))$ between \mbox{$A$-algebras} of iterated Laurent series over~$A$ with the natural topology.

As a geometric motivation, let us mention that when $A=k$ is a field, the \mbox{$n$-dimensional} local field $k((t_1))\ldots ((t_n))$ appears naturally from iterated localization and completion procedures along a full flag of irreducible subvarieties on an $n$-dimensional algebraic variety~$X$, see, e.g., a survey~\cite{O}.
Thus, in particular, for a finite-dimensional $k$-algebra $A$, the ring
$$
A((t_1))  \ldots ((t_n))\simeq A\otimes_k k((t_1))  \ldots ((t_n))
$$
can be considered as a deformation of the $n$-dimensional local field along $A$, or as a ring that appears from the scheme $X \times_k A$ over $A$.

\medskip

Let us describe our main results. For short, we denote $A((t_1))\ldots((t_n))$ by $\LL^n(A)$. Elements of $\LL^n(A)$ have the form ${f=\sum\limits_{l\in\z^n}a_lt_1^{l_1}\ldots t_n^{l_n}}$, where ${l=(l_1,\ldots, l_n)\in\z^n}$ and~${a_l\in A}$, with a certain restriction on the set of indices of non-zero coefficients, see Section~\ref{prelim}.

Firstly, we give the following description of all continuous homomorphisms between \mbox{$A$-algebras} of iterated Laurent series, see Theorem~\ref{prop:contchange}. Remind that the classical valuation (when $A$ is a field) is generalized to a homomorphism of groups
$$
\nu\,:\,\LL^n(A)^*\lrto\z^n\,.
$$
For instance, we have $\nu(t_1)=(1,0,\ldots,0)$ and $\nu(t_n)=(0,\ldots,0,1)$.
Given a collection of~$n$ invertible iterated Laurent series $\varphi_1,\ldots,\varphi_n\in \LL^m(A)^*$ in~$m$ variables, suppose that the \mbox{$(m\times n)$-matrix} $\big(\nu(\varphi_1),\ldots,\nu(\varphi_n)\big)$ with integral entries is in column echelon form with positive leading entries (see Lemma~\ref{lem:upper}$(ii)$). For example, when $m=n$, this means that the matrix is upper-trangular with positive diagonal entries. Then we have a well-defined continuous homomorphism of $A$-algebras
$$
\phi\,:\,\LL^n(A)  \lrto \LL^m(A)\,,
$$
$$
\mbox{$f=\sum\limits_{l\in\z^n}a_lt_1^{l_1}\ldots t_n^{l_n}\longmapsto f(\varphi_1,\ldots,\varphi_n)=\sum\limits_{l\in\z^n}a_l\varphi_1^{l_1}\ldots\varphi_n^{l_n}$}\,.
$$
Moreover, all continuous homomorphisms of $A$-algebras $\LL^n(A)\to\LL^m(A)$ have this form.

In particular, this implies that there are no such homomorphisms when $n>m$, see Corollary~\ref{cor:inequality}.

As another application, we obtain that the functor that sends a commutative ring~$A$ to the set of all continuous homomorphisms of $A$-algebras $\LL^n(A)\to\LL^m(A)$ is represented by an ind-affine scheme over $\z$ that has many nice geometric properties, see Proposition~\ref{prop:repr} and Corollary~\ref{cor:repr}. More precisely, this ind-scheme is a product of an ind-flat scheme over~$\z$ with a so-called thick ind-cone introduced and studied in~\cite{GOMS}. Such ind-schemes provide an adequate replacement of ind-flat schemes in the context of iterated Laurent series in many variables.

\medskip

Secondly, we investigate how the residue of top differential forms is changed under continuous endomorphisms. More precisely, let the free rank one $\LL^n(A)$-module ${\widetilde{\Omega}^n_{\LL^n(A)}\simeq \LL^n(A)dt_1\wedge\ldots\wedge dt_n}$ be the natural quotient of the $\LL^n(A)$-module of absolute K\"ahler $n$-differentials of the ring $\LL^n(A)$. Further, let
$$
\res\,:\,\widetilde{\Omega}^n_{\LL^n(A)}\longrightarrow A\,,\qquad \mbox{$\sum\limits_{l\in\z^n}a_lt_1^{l_1}\ldots t_n^{l_n}dt_1\wedge\ldots\wedge dt_n\longmapsto a_{-1\ldots -1}$}\,,
$$
be the $n$-dimensional residue map. In Proposition~\ref{prop:invres} and Corollary~\ref{cor:res}, we describe explicitly how the residue map is changed under a continuous homomorphism~$\phi$ as above. In particular, in the case $m=n$, we show that the residue map is invariant under all continuous automorphisms of the $A$-algebra $\LL^n(A)$, see Remark~\ref{rmk:invres}.

As an application, we obtain injectivity of the homomorphisms under a rather mild assumption, see Corollary~\ref{cor:inj} and Remark~\ref{rmk:inj}.

Note that when $A$ is a field, the invariance of the residue under continuous automorphisms of the field $\LL^n(A)$ over $A$  is classical in the case $n=1$ (see, e.g.,~\cite{S}) and is known in the case $n\geqslant 2$ due to the works of Parshin, Lomadze, and Yekutieli, see~\cite[\S\,1, Prop.\,1]{P0},~\cite[Lem.\,6(VIII)]{Lom}, and~\cite[Theor.\,2.4.3]{Y}. Also, let us mention that unlike the case $n=1$, when $n\geqslant 2$, the residue is not necessarily invariant under a non-continuous automorphism, see~\cite[Ex.\,2.4.24]{Y} (when $n=1$, any automorphism of the field $A((t))$ is continuous, see, e.g.,~\cite[Ch.\,II, \S\,2, Exer.\,6\,a)]{FV}).

\medskip

Thirdly, when $m=n$, we give a criterion of invertibility of a continuous endomorphism $\phi$, see Theorem~\ref{theor:inv}. Namely,~$\phi$ is invertible if and only if the \mbox{$(n\times n)$-matrix} ${\big(\nu(\varphi_1),\ldots,\nu(\varphi_n)\big)}$ is invertible, or, equivalently, this upper-triangular matrix has units on the diagonal. Moreover, it turns out that if $\phi$ is invertible, then the inverse $\phi^{-1}$ coincides with the adjoint map $\phi^{\vee}$ to the induced map $\phi\colon\widetilde{\Omega}^n_{\LL^n(A)}\to \widetilde{\Omega}^n_{\LL^n(A)}$ (denoted also by~$\phi$) with respect to the perfect continuous pairing
$$
\LL^n(A)  \times \widetilde{\Omega}^n_{\LL^n(A)}  \lrto  A\,,\qquad (f, \omega) \longmapsto \res(f \omega)\,.
$$

The generalization of the invertibility criterion from $n=1$ to $n\geqslant 1$ is non-trivial. It is not clear how to apply methods from~\cite{Mo} and~\cite{MPM} in the case $n>1$ for an arbitrary commutative ring $A$ (cf.~\cite[Rem.\,5.2]{OZ1} for $n=2$). Our proof of the criterion is based on a different method: we use the invariance of the residue map and the above representability result together with the theory of thick ind-cones from~\cite{GOMS}.

\medskip

Finally, the description of the inverse of a continuous endomorphism $\phi$ as the adjoint map gives immediately an explicit formula for~$\phi^{-1}=\phi^{\vee}$, see in Remark~\ref{rmk:expladj} an explicit formula for $\phi^{\vee}$. For simplicity, let us give this formula when $n=1$. Even in this case, the formula seems not to be known before. If $\phi$ is an endomorphism as in formula~\eqref{map} given by an element $\varphi\in A((t))^*$ with $\nu(\varphi)=1$, then for any element $f \in A((t))$, we have
$$
\mbox{$\phi^{-1}(f)=\phi^{\vee}(f)= \sum\limits_{l\in\z}\res\big(f\varphi^{-l-1}  (\partial \varphi/ \partial t)dt \big)t^l$}\,.
$$
This implies the following peculiar identity (for the general case $n\geqslant 1$, see formula~\eqref{eq:identity}):
$$
f=\mbox{$\sum\limits_{l\in\z}\res\big(f\varphi^{-l-1}   (\partial \varphi/ \partial t)  dt \big)\varphi^l$}\,.
$$
We do not know any elementary (direct computational) proof of this formula even when~$A$ is a field.

\medskip

The results of this paper will be applied in a forthcoming paper~\cite{GOnext} to further investigations of the $n$-dimensional Contou-Carr\`ere symbol, which was studied  by Contou-Carr\`{e}re himself in~\cite{CC1, CC2} for the case~$n=1$, by the second named author and Zhu in~\cite{OZ1} for the case $n=2$, and by the authors in~\cite{GO1, GOMilnor, GOMS} for  arbitrary~$n$.

\medskip

We are grateful to the referee for his comments.

\section{Preliminaries and notation}   \label{prelim}

In this section, we introduce notation and recall, mainly from~\cite{GOMS}, facts on the ring of iterated Laurent series, the topology on this ring, the group of its invertible elements, its differential forms, and on the higher-dimensional residue map.

\medskip

For short, by a ring, we mean a commutative associative unital ring and similarly for algebras.

Throughout the paper, $A$ denotes a ring. Let $n\geqslant 1$ be a positive integer. Let ${\LL(A):=A((t))=A[[t]][t^{-1}]}$ be the ring of Laurent series over $A$ and let
$$
\LL^n(A):=\LL\big(\LL^{n-1}(A)\big)=A((t_1))\ldots((t_n))
$$
be the ring of iterated Laurent series over $A$. We call elements $t_1,\ldots,t_n$ variables or parameters. Put $t^{l}:=t_1^{l_1}\ldots t_n^{l_n}$ for an element ${l=(l_1,\ldots,l_n)\in\z^n}$. More generally, for any collection $\varphi=(\varphi_1,\ldots,\varphi_n)$, where $\varphi_i\in\LL^n(A)$, and for an element ${l=(l_1,\ldots,l_n)\in\z^n}$, we put $\varphi^l:=\varphi_1^{l_1}\ldots\varphi_n^{l_n}$. Given a formal series $f=\sum\limits_{l\in \z^n} a_l t^{l}$, let the support $\supp(f)\subset\z^n$ of~$f$ be the set of all $l\in\z^n$ such that $a_l\ne 0$. Elements of $\LL^n(A)$ are described explicitly in terms of their supports as follows.

\medskip

Define iteratively the sets
$$
\Lambda_1:=\z\,,\qquad \Lambda_n:=\Map(\z,\Lambda_{n-1})\times \z\,,
$$
where $\Map(\z,\Lambda_{n-1})$ denotes the set of all maps from $\z$ to $\Lambda_{n-1}$. The set $\Lambda_n$ has a natural group structure defined inductively by sums of integers and point-wise sums of group valued functions. For an element $\lambda\in\z$, define the set
$$
\z_{\lambda}=\z^1_{\lambda}:=\{l\in \z\,\mid\,l\geqslant \lambda\}\subset \z\,.
$$
For an element $\lambda=(\lambda',\lambda_n)\in\Lambda_n$ with $n\geqslant 2$, define inductively the set
$$
{\z^n_{\lambda}:=\bigcup\limits_{l_n\geqslant\lambda_n}\z^{n-1}_{\lambda'(l_n)}\times\{l_n\}}\subset\z^n\,.
$$
Explicitly, we have
$$
\Lambda_n=\big\{(\lambda_1,\ldots,\lambda_n)\;\mid\;\lambda_p \, \colon  \, \z^{n-p}\lrto \z \quad \mbox{for} \quad 1\leqslant p\leqslant n-1,\quad \lambda_n\in\Z \big\}
$$
and for an element $\lambda=(\lambda_1,\ldots,\lambda_n)\in\Lambda_n$, we have
$$
\z^n_{\lambda}=\big\{(l_1,\ldots,l_n)\in\Z^n\;\mid\;l_n\geqslant \lambda_n,\,l_{n-1}\geqslant \lambda_{n-1}(l_n),\,\ldots,\,l_{1}\geqslant \lambda_{1}(l_2,\ldots,l_n)\big\}\,.
$$
Given two subsets $X,Y\subset \z^n$, let $X+Y$ be the subset of $\z^n$ that consists of all pair-wise sums $x+y\in\z^n$, where $x\in X$ and $y\in Y$. Similarly, let the subset $-X\subset\z^n$ consist of all elements $-x$, where $x\in X$. For short, we put $0:=(0,\ldots,0)$.

The ring of iterated Laurent series~$\LL^n(A)$ consists of all series $f=\sum\limits_{l\in \z^n} a_l t^{l}$ such that $\supp(f)\subset \z^n_{\lambda}$ for some $\lambda=(\lambda',\lambda_n)\in\Lambda_n$. Explicitly, we have that $f=\sum\limits_{i\geqslant{\lambda_n}}g_i t_n^i$, where $g_i\in \LL^{n-1}(A)$, $i\geqslant \lambda_n$, are such that $\supp(g_i)\subset\z^{n-1}_{\lambda'(i)}$.

Given two iterated Laurent series $f=\sum\limits_{l\in\z^n_{\lambda}}a_lt^l$ and $g=\sum\limits_{l\in\z^n_{\mu}}b_lt^l$, we have that ${fg=\sum\limits_{l\in\z^n}c_lt^l}$, where
$c_l=\sum\limits_{p+q=l} a_p b_q$. Since $fg$ is a well-defined iterated Laurent series, we obtain that for all $\lambda,\mu\in\Lambda_n$, the summation map ${\z^n_{\lambda}\times\z^n_\mu\to \z^n}$ has finite fibers and there is $\rho\in\Lambda_n$ such that ${\z^n_{\lambda}+\z^n_{\mu}\subset \z^n_{\rho}}$.

\medskip

One has a natural topology on $\LL^n(A)$ such that $\LL^n(A)$ is a topological group with the group structure given by sums of iterated Laurent series. The topology was introduced first by Parshin in~\cite{P1} in the case when $A$ is a finite field. For properties of this topology in the general case, see~\cite[\S\,3.2]{GOMS}. The base of open neighborhoods of zero is given by $A$-submodules $U_{\lambda}\subset\LL^n(A)$, where $\lambda\in\Lambda$ and $U_{\lambda}$ consists of all elements $f\in\LL^n(A)$ such that $\supp(f)\cap (-\z^n_{\lambda})=\varnothing$. Note that this definition of the base of topology looks differently than the one in~\cite{GOMS} but two definitions are evidently equivalent. A countable set $\{f_i\}$, $i\in\N$, of iterated Laurent series $f_i\in\LL^n(A)$ tends to zero if for any $\lambda\in\Lambda_n$, all but finitely many $i\in\N$ satisfy $\supp(f_i)\cap(-\z^n_{\lambda})=\varnothing$, or, equivalently, $0\notin\supp(f_i)+\z^n_{\lambda}$.

The topological group $\LL^n(A)$ is complete, see~\cite[Rem.\,3.4]{GOMS}, and a series $\sum\limits_{i\in\N}f_i$ of elements of $\LL^n(A)$ converges if and only if the countable set $\{f_i\}$, $i\in\N$, tends to zero, see~\cite[\S\,3.3]{GOMS} (if this holds, the result of the summation does not depend on the order of summation).

Note that when $n\geqslant 2$, the product of Laurent series is not continuous with respect to this topology, whence~$\LL^n(A)$ is not a topological ring. Nevertheless, the product with a fixed element is a continuous homomorphism from $\LL^n(A)$ to itself.

\medskip

Define inductively the lexicographical order on $\z^n$ as follows: we have $(l_1,\ldots,l_n)\leqslant (l'_1,\ldots,l'_n)$ if and only if either $l_n < l'_n$, or $l_n=l'_n$ and $(l_1, \ldots, l_{n-1}) \leqslant (l'_1, \ldots, l'_{n-1})$. Clearly, the order is invariant under translations on the group $\z^n$.

Let~$\uz$ denote the constant sheaf in Zariski topology associated with the constant presheaf~$\z$ on~$\Spec(A)$. Thus $\uz(A)$ consists of all locally constant functions on $\Spec(A)$ with values in $\z$.

Let $\LL(A)^*$ be the group of invertible elements in the ring $\LL(A)$. By~\cite[\S\,4.2]{GOMS}, for any element $f\in\LL^n(A)^*$, there is a finite decomposition into a product of rings ${A\simeq\prod\limits_{i=1}^N A_i}$ with the following property. Let $f=\prod\limits_{i=1}^N f_i$ with $f_i=\sum\limits_{l\in\z^n}a_{i,l}t^l$, $a_{i,l}\in A_i$, be the decomposition of $f$ with respect to the arising decomposition $\LL^n(A)\simeq\prod\limits_{i=1}^N\LL^n(A_i)$. Then for for each $i$, $1\leqslant i\leqslant N$, there is an element $l_i\in\z^n$ such that for any $l<l_i$, the element $a_{i,l}\in A_i$ is nilpotent and the element $a_{i,l_i}\in A_i$ is invertible. Moreover, the iterated Laurent series ${\sum\limits_{l<l_i}a_{i,l}t^l\in\LL^n(A_i)}$ is nilpotent. Let $\underline{l}\in\uz^n(A)$ be the locally constant function on $\Spec(A)$ whose value on $\Spec(A_i)$ is $l_i$ for any $i$, $1\leqslant i\leqslant N$. Then the map
$$
\nu\,:\,\LL^n(A)^*\longrightarrow \uz^n(A)\,,\qquad f\longmapsto \underline{l}\,,
$$
is a homomorphism of groups. These facts were first proved by Contou-Carr\`ere in the case $n=1$, see~\cite{CC1} and~\cite{CC2}, and by the second named author and Zhu in the case $n=2$, see~\cite{OZ1}.

\medskip

Given a functor $F$ on the category of rings, by $F_A$ denote the restriction of $F$ to the category of $A$-algebras. If a functor $F$ is represented by an (ind-)scheme, then we denote this (ind-)scheme also by $F$. By a subgroup $G\subset F$, we mean a morphism of functors $G\to F$ such that for any ring $A$, the map $G(A)\to F(A)$ is injective.

By~$L^n\ga$ and $L^n\gm$ denote the group functors on the category of rings $A\mapsto \LL^n(A)$ and $A\mapsto \LL^n(A)^*$, respectively. These functors are represented by ind-affine schemes that have many nice geometric properties, see~\cite[\S\S\,6.2, 6.3]{GOMS}.

Explicitly, $L^n\ga$ is represented by the ind-closed subscheme
$$
\mbox{``$\varinjlim\limits_{\lambda\in \Lambda_{n}}$''}\,\Ab^{\z^n_{\lambda}}\subset\Ab^{\z^n}\,,
$$
which is an ind-affine space, where for a set $E$, by $\Ab^E$ we denote the affine space whose set of coordinates is bijective with $E$. An iterated Laurent series $\sum\limits_{l\in \z^n}a_lt^l\in\LL^n(A)$ corresponds to the point~$(a_l)\in\Ab^{\z^n}(A)$, where $l\in \z^n$.

Let $(L^n\gm)^0$ be the kernel of the morphism of group functors $\nu\colon L^n\gm\to\uz^n$. The group functor $(L^n\gm)^0$ is represented by an absolutely connected ind-affine scheme over~$\z$, see~\cite[Def.\,5.18]{GOMS} and~\cite[Prop.\,6.13(iv)]{GOMS}.

\medskip

By $\widetilde{\Omega}^1_{\LL^n(A)}$ denote the quotient of the $\LL^n(A)$-module of absolute K\"ahler differentials of the ring $\LL^n(A)$ by the $\LL^n(A)$-submodule generated by all elements of the form ${df-\sum\limits_{i=1}^n\frac{\partial f}{\partial t_i}}dt_i$. Then $\widetilde{\Omega}^1_{\LL^n(A)}$ is a free $\LL^n(A)$-module of rank $n$. For each $i\geqslant 0$, put $\widetilde{\Omega}^i_{\LL^n(A)}:=\bigwedge^i_{\LL^n(A)}\widetilde{\Omega}^1_{\LL^n(A)}$. Note that one has a well-defined de Rham differential $d\colon \widetilde{\Omega}^i_{\LL^n(A)}\to \widetilde{\Omega}^{i+1}_{\LL^n(A)}$, $i\geqslant 0$.

We have an $A$-linear {\it residue map}
$$
\res\,:\,\widetilde{\Omega}^n_{\LL^n(A)}\longrightarrow A\,,\qquad \mbox{$\sum\limits_{l\in\z^n}a_lt^l\cdot dt_1\wedge\ldots\wedge dt_n\longmapsto a_{-1\ldots-1}$}\,.
$$
If $A$ is $\Q$-algebra, then the residue map induces an isomorphism (see, e.g.,~\cite[Lem.\,3.12]{GOMS})
$$
\widetilde{\Omega}^n_{\LL^n(A)}/d\widetilde{\Omega}^{n-1}_{\LL^n(A)}\stackrel{\sim}\longrightarrow A\,.
$$

\section{Continuous and functorial additive homomorphisms}

We compare in this section continuous additive homomorphisms $\LL^n(A)\to\LL^m(A)$ with morphisms of groups functors $L^n\ga\to L^m\ga$, see Proposition~\ref{prop:funccont} and Corollary~\ref{cor:funccontrg}. As an application, we describe in Proposition~\ref{prop:valuation} how the homomorphism ${\nu\colon\LL^n(A)^*\to\uz(A)^n}$ is changed under continuous homomorphisms of $A$-algebras of iterated Laurent series.

\medskip

We start with the following simple facts.

\begin{lemma}\label{cor:conv}
\hspace{0cm}
\begin{itemize}
\item[(i)]
For any element $f=\sum\limits_{l\in\z^n}a_lt^l$ in $\LL^n(A)$, the series $\sum\limits_{l\in\z^n}a_lt^l$ converges to~$f$ in the topological group $\LL^n(A)$.
\item[(ii)]
The set of all Laurent polynomials is dense in $\LL^n(A)$.
\end{itemize}
\end{lemma}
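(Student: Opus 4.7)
The plan is to reduce both parts to a single finiteness statement about the index sets. Given $f = \sum_{l \in \z^n} a_l t^l$ with $\supp(f) \subset \z^n_{\lambda}$ for some $\lambda \in \Lambda_n$, I view the family $\{a_l t^l\}_{l \in \z^n}$ as a countable family of elements of $\LL^n(A)$. By the convergence criterion recalled in the preliminaries (a countable family tends to zero iff its sum converges, and the result is order-independent), it suffices to show that for every $\mu \in \Lambda_n$, only finitely many $l$ with $a_l \neq 0$ satisfy $\supp(a_l t^l) \cap (-\z^n_\mu) = \{l\} \cap (-\z^n_\mu) \neq \varnothing$. Since $\supp(f) \subset \z^n_\lambda$, this reduces to the purely combinatorial assertion that
\[
\z^n_\lambda \cap (-\z^n_\mu) \text{ is finite for every } \lambda, \mu \in \Lambda_n.
\]

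I would prove this finiteness by induction on $n$. For $n=1$, $\z_\lambda \cap (-\z_\mu) = [\lambda, -\mu] \cap \z$, which is finite. For $n \geqslant 2$, writing $\lambda = (\lambda',\lambda_n)$ and $\mu = (\mu',\mu_n)$ and using the explicit recursive description
\[
\z^n_\lambda = \bigcup_{l_n \geqslant \lambda_n} \z^{n-1}_{\lambda'(l_n)} \times \{l_n\},
\]
an element $(l_1,\ldots,l_n) \in \z^n_\lambda \cap (-\z^n_\mu)$ must satisfy $\lambda_n \leqslant l_n \leqslant -\mu_n$, forcing $l_n$ into a finite set; for each such $l_n$, the tuple $(l_1,\ldots,l_{n-1})$ lies in $\z^{n-1}_{\lambda'(l_n)} \cap (-\z^{n-1}_{\mu'(-l_n)})$, which is finite by the inductive hypothesis. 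This is the only step of the proof that requires any work, and I expect it to be the main (modest) obstacle.

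Having established convergence, I still need to identify the limit with $f$. Fix any exhausting sequence $F_1 \subset F_2 \subset \ldots$ of finite subsets of $\z^n$ with $\bigcup_N F_N = \z^n$, and put $f_N := \sum_{l \in F_N} a_l t^l$. For any $\mu \in \Lambda_n$, the finite set $\z^n_\lambda \cap (-\z^n_\mu)$ is contained in $F_N$ for all sufficiently large $N$; then
\[
\supp(f - f_N) \cap (-\z^n_\mu) \subset (\z^n_\lambda \setminus F_N) \cap (-\z^n_\mu) = \varnothing,
\]
so $f - f_N \in U_\mu$, giving $f_N \to f$. Since the sum is order-independent, this identifies the sum of the series with $f$, proving (i).

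Part (ii) is then immediate: each partial sum $f_N$ above is a Laurent polynomial, so the sequence $\{f_N\}$ exhibits an arbitrary $f \in \LL^n(A)$ as a limit of Laurent polynomials, which shows the density claim.
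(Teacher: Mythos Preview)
Your proof is correct and follows essentially the same route as the paper's. Both arguments reduce (i) to the finiteness of $\supp(f)\cap(-\z^n_{\mu})$ for each $\mu\in\Lambda_n$, and deduce (ii) immediately from (i); the only cosmetic difference is that you prove this finiteness by an explicit induction on $n$, whereas the paper invokes the equivalent fact, already recorded in the preliminaries, that the summation map $\z^n_{\lambda}\times\z^n_{\mu}\to\z^n$ has finite fibers.
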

\begin{proof}
$(i)$ Consider an element $\lambda\in\Lambda_n$ and the corresponding open neighborhood of zero $U_{\lambda}\subset\LL^n(A)$. Since the summation map ${\supp(f)\times\z^n_{\lambda}\to \z^n}$ has finite fibers, there is a finite subset $S\subset\supp(f)$ such that $0\notin \big(\supp(f)\smallsetminus S\big)+\z^n_{\lambda}$.
Hence for any finite subset $T\subset \supp(f)$ such that $S\subset T$, the partial sum $f_T:=\sum\limits_{l\in T}a_lt^l$ satisfies the condition $f-f_T\in U_{\lambda}$. Therefore partial sums of the series $\sum\limits_{l\in\z^n}a_lt^l$ tend to~$f$ (with respect to any linear order on the summands of the series).

$(ii)$ This follows directly from item~$(i)$.
\end{proof}

\begin{lemma}\label{lemma:intersect}
For any subset $X\subset \z^n$, the following conditions are equivalent:
\begin{itemize}
\item[(i)]
for some $\lambda\in\Lambda_n$, we have $X\subset \z^n_{\lambda}$;
\item[(ii)]
for any $\mu\in\Lambda_n$, the intersection $X\cap (-\z^n_{\mu})$ is finite.
\end{itemize}
\end{lemma}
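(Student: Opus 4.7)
The implication $(i) \Rightarrow (ii)$ follows immediately from the fact, noted in the preliminaries, that for any $\lambda, \mu \in \Lambda_n$ the summation map $\z^n_\lambda \times \z^n_\mu \to \z^n$ has finite fibers: the fiber over $0$ is in bijection with $\z^n_\lambda \cap (-\z^n_\mu)$, and so under hypothesis $(i)$ the subset $X \cap (-\z^n_\mu) \subset \z^n_\lambda \cap (-\z^n_\mu)$ is finite.

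For the reverse implication $(ii) \Rightarrow (i)$, I would argue by induction on $n$. When $n=1$, condition $(ii)$ applied to $\mu = 0 \in \z$ says that $X \cap (-\z_0)$ is finite, which forces $X$ to have a lower bound $\lambda \in \z$, and then $X \subset \z_\lambda$. For $n \geqslant 2$, let $\pi \colon \z^n \to \z$ denote the projection to the last coordinate and, for $i \in \z$, write $X_i := \{l' \in \z^{n-1} \mid (l', i) \in X\}$ for the corresponding slice. The plan is to establish: (a) $\pi(X)$ is bounded below, say by some $\lambda_n \in \z$; and (b) for each $i \geqslant \lambda_n$, the subset $X_i \subset \z^{n-1}$ satisfies the $(n-1)$-dimensional analogue of $(ii)$. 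Granting both, the inductive hypothesis produces $\lambda'(i) \in \Lambda_{n-1}$ with $X_i \subset \z^{n-1}_{\lambda'(i)}$; extending $\lambda'$ arbitrarily for $i < \lambda_n$ and setting $\lambda := (\lambda', \lambda_n) \in \Lambda_n$, one obtains $X \subset \z^n_\lambda$.

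Both (a) and (b) reduce to choosing an auxiliary element $\mu \in \Lambda_n$ for which condition $(ii)$ forces the desired finiteness. For (a), if $\pi(X)$ were unbounded below, I would pick $x_1, x_2, \ldots \in X$ whose last coordinates $i_k := \pi(x_k)$ are pairwise distinct and tend to $-\infty$; after discarding finitely many one may assume $i_k \leqslant 0$. I then define $\mu := (\mu', 0) \in \Lambda_n$, where $\mu' \colon \z \to \Lambda_{n-1}$ is prescribed by taking $\mu'(-i_k) \in \Lambda_{n-1}$ to be any element with $-(x_k)' \in \z^{n-1}_{\mu'(-i_k)}$ (which exists since every single point of $\z^{n-1}$ lies in some $\z^{n-1}_\nu$), and by taking $\mu'$ to be arbitrary on the remaining integers; all the $x_k$ then lie in $-\z^n_\mu$, contradicting $(ii)$. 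For (b), given $i \geqslant \lambda_n$ and an arbitrary $\nu \in \Lambda_{n-1}$, I would set $\mu := (\mu', -i) \in \Lambda_n$ with $\mu'(-i) := \nu$ and $\mu'$ arbitrary on other integers; unpacking the definition of $-\z^n_\mu$ shows that $X \cap (-\z^n_\mu)$ contains the subset $\{(l', i) \in X \mid l' \in -\z^{n-1}_\nu\}$, which is in bijection with $X_i \cap (-\z^{n-1}_\nu)$, forcing the latter to be finite by $(ii)$. The only genuine obstacle is the bookkeeping around the recursive structure of $\Lambda_n$ and $\z^n_\lambda$; once the right $\mu$'s have been written down, the verifications are direct.
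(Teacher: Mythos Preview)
Your argument is correct and follows essentially the same route as the paper's proof: both prove $(i)\Rightarrow(ii)$ via the finite fibers of the summation map, and both prove $(ii)\Rightarrow(i)$ by induction on $n$, first bounding the last coordinate of $X$ from below and then applying the inductive hypothesis to each slice $X_i$. The only difference is that you spell out explicitly the auxiliary elements $\mu\in\Lambda_n$ needed for steps (a) and (b), whereas the paper handles this more tersely by invoking the covering $\z^{n-1}=\bigcup_{\rho}(-\z^{n-1}_{\rho})$ and leaving the construction of $\mu$ implicit.
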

\begin{proof}
Since the summation map $\z^n_{\lambda}\times \z^n_{\mu}\to\z^n$ has finite fibers, $(i)$ implies $(ii)$. Let us show by induction on $n$ that $(ii)$ implies $(i)$.
The base $n=1$ is obvious and let us make the induction step from $n-1$ to $n$, where $n\geqslant 2$. Note that there are equalities
$$
\z^n=\bigcup\limits_{\lambda\in\Lambda_n}\z^n_{\lambda}=\bigcup\limits_{\lambda\in\Lambda_n}(-\z^n_{\lambda})\,.
$$
Using the analogous fact with $n-1$, we see that there is $\lambda_n\in\z$ such that for any $l_n<\lambda_n$, we have $X\cap \big(\z^{n-1}\times\{l_n\}\big)=\varnothing$. Moreover, for all $l_n\geqslant \lambda_n$ and $\rho\in\Lambda_{n-1}$, the intersection $X\cap \big( (-\z^{n-1}_{\rho})\times\{l_n\}\big)$ is finite.
Thus we conclude by the induction hypothesis applied to the intersections $X\cap\big(\z^{n-1}\times\{l_n\}\big)$ with $l_n\geqslant \lambda_n$.
\end{proof}

\medskip

Let $\ga$ be the additive group scheme $\Spec\big(\z[T]\big)$ over $\z$. Define the ind-affine group scheme $\ga^{\oplus\N}:=\mbox{``$\varinjlim\limits_{i\in \N}$''}(\ga)^i$. Note that the group functor~$\ga^{\oplus \N}$ has a natural $\ga$-module structure in the following sense: for any ring $B$, the group $(\ga^{\oplus\N})(B)\simeq B^{\oplus\N}$ has a canonical structure of a $B$-module, which is functorial with respect to~$B$. Similarly, the group functor $L^n\ga$ has also a natural $\ga$-module structure. We say that a morphism of group functors $L^n\ga\to\ga^{\oplus\N}$ is {\it $\ga$-linear} if it respects the corresponding $B$-module structures for any ring $B$. One defines similarly $(\ga)_A$-linear morphisms of group functors $(L^n\ga)_A\to(\ga^{\oplus\N})_A$.

Given a $(\ga)_A$-linear morphism of group functors
$$
\pi\,:\,(L^n\ga)_A\longrightarrow (\ga^{\oplus\N})_A\,,
$$
we have the kernel $\Ker\big(\pi(A)\big)\subset \LL^n(A)$ of the evaluation $\pi(A)$ of $\pi$ at $A$
$$
\pi(A)\,:\, (L^n\ga)_A(A)=\LL^n(A)\longrightarrow (\ga^{\oplus\N})_A(A)=A^{\oplus \N}\,.
$$
Clearly, $\Ker\big(\pi(A)\big)$ is an $A$-submodule of $\LL^n(A)$. The following statement provides an important relation between the ind-affine scheme~$(L^n\ga)_A$ and the topology on $\LL^n(A)$.

\begin{lemma}\label{lemma:basechar}
The collection of the subgroups $\Ker\big((\pi(A)\big)\subset \LL^n(A)$, where ${\pi\colon (L^n\ga)_A\to (\ga^{\oplus\N})_A}$ runs over all $(\ga)_A$-linear morphisms of group functors, form a base of open neighborhoods of zero in the topological group $\LL^n(A)$.
\end{lemma}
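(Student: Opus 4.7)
The plan is to establish two containments: for every $\lambda \in \Lambda_n$ there exists a $(\ga)_A$-linear morphism $\pi$ with $\Ker\big(\pi(A)\big) = U_{\lambda}$, and for every such $\pi$ there exists $\lambda$ with $U_{\lambda} \subset \Ker\big(\pi(A)\big)$. Since the $U_{\lambda}$ form a base of open neighborhoods of zero in $\LL^n(A)$ and each $\Ker\big(\pi(A)\big)$ is a subgroup, this will imply that the subgroups $\Ker\big(\pi(A)\big)$ also form such a base.

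I would first unpack concretely what a $(\ga)_A$-linear morphism of group functors $\pi\colon(L^n\ga)_A\to(\ga^{\oplus\N})_A$ amounts to. Using the ind-affine presentation $(L^n\ga)_A = \mbox{``$\varinjlim_{\lambda}$''}\,\Ab^{\z^n_{\lambda}}$, each component $\pi_j\colon (L^n\ga)_A\to\ga_A$ restricts on every $\Ab^{\z^n_{\lambda}}$ to a finite $A$-linear combination of coordinate functions (by linearity together with the polynomial nature of morphisms from an affine space), and these combinations glue across $\lambda$ into a single function $c_j\colon\z^n\to A$ whose intersection $\supp(c_j)\cap\z^n_{\lambda}$ is finite for every $\lambda$. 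Applying Lemma~\ref{lemma:intersect} to $-\supp(c_j)$ gives condition~(i): $\supp(c_j)\subset -\z^n_{\mu_j}$ for some $\mu_j\in\Lambda_n$. Separately, the target being $\ga^{\oplus\N}$ rather than $\prod\ga$ imposes condition~(ii): for every $\lambda\in\Lambda_n$, only finitely many $j$ satisfy $\supp(c_j)\cap\z^n_{\lambda}\neq\varnothing$. The evaluation is then $\pi(A)\bigl(\sum_l a_l t^l\bigr)=\bigl(\sum_l c_{j,l}\,a_l\bigr)_{j\in\N}$, each inner sum being finite by Lemma~\ref{lemma:intersect}.

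For the first containment, given $\lambda$ I would enumerate the countable set $-\z^n_{\lambda}$ as $\{l_j\}_{j\in\N}$ and take $\pi_j$ to be the coordinate projection $\sum_l a_l t^l\mapsto a_{l_j}$. Conditions~(i) and~(ii) both reduce to the finiteness of $\z^n_{\mu}\cap(-\z^n_{\lambda})$ for every $\mu$, which is Lemma~\ref{lemma:intersect}. By construction, $\Ker\big(\pi(A)\big)$ consists of those $f\in\LL^n(A)$ with $\supp(f)\cap(-\z^n_{\lambda})=\varnothing$, so $\Ker\big(\pi(A)\big)=U_{\lambda}$.

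For the reverse containment, starting from an arbitrary $\pi$ with data $(c_{j,l})$ as above, I would set $X:=\bigcup_j\supp(c_j)\subset\z^n$. For each $\mu\in\Lambda_n$, the intersection $X\cap\z^n_{\mu}$ is a finite union (by~(ii)) of finite sets (by~(i) together with Lemma~\ref{lemma:intersect}), hence finite. A second application of Lemma~\ref{lemma:intersect}, now to $-X$, yields some $\lambda$ with $X\subset -\z^n_{\lambda}$. For any $f\in U_{\lambda}$ one then has $\supp(f)\cap\supp(c_j)\subset\supp(f)\cap X=\varnothing$ for every $j$, so every component $\pi_j(A)(f)=\sum_l c_{j,l}\,a_l$ vanishes and $f\in\Ker\big(\pi(A)\big)$. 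The main obstacle I anticipate is the preliminary step: translating the abstract data of a $(\ga)_A$-linear morphism of ind-schemes into the explicit pair of conditions~(i)--(ii); once this translation is in hand, both containments reduce to short applications of Lemma~\ref{lemma:intersect}.
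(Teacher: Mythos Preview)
Your proposal is correct and follows essentially the same route as the paper: describe a $(\ga)_A$-linear $\pi$ as a family of linear functionals $(\chi_j)$ with the two finiteness constraints you call (i) and (ii), apply Lemma~\ref{lemma:intersect} to $X=\bigcup_j\supp(c_j)$ to produce a $\lambda$ with $U_\lambda\subset\Ker\big(\pi(A)\big)$, and conversely realize each $U_\lambda$ via the coordinate projections indexed by $-\z^n_\lambda$. The only cosmetic difference is that the paper does not bother isolating your condition~(i) separately; it argues directly that $X\cap\z^n_\mu$ is finite because only finitely many $\chi_j$ restrict non-trivially to $\Ab^{\z^n_\mu}$ and each such restriction is a finite linear form.
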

\begin{proof}
Let us show that for any $\pi$ as in the lemma, the kernel $\Ker\big((\pi(A)\big)$ is an open subgroup of $\LL^n(A)$. For any set $E$, a $(\ga)_A$-linear morphism of group schemes ${(\Ab^E)_A\to (\ga)_A}$ over~$A$ is given by a linear functional $(x_e)\mapsto\sum\limits_{e\in E} c_{e} x_e$, where $c_e\in A$ are scalars, $x_e$ are coordinates on $\Ab^E$, and all but finitely many $c_e$ equal zero.

Since the functor~$L^n\ga$ is represented by a formal direct limit of group schemes~$\Ab^{\z^n_{\mu}}$, where $\mu\in\Lambda_n$, we see that a $(\ga)_A$-linear morphism of group functors ${\chi\colon (L^n\ga)_A\to (\ga)_A}$ is given by a linear functional $\sum\limits_{l\in\z^n}a_lt^l\mapsto\sum\limits_{l\in \z^n} c_{l} a_l$, where $c_l\in A$ and for any $\mu\in\Lambda_n$, the intersection $\supp(\chi)\cap \z^n_{\mu}$ is finite. Here, $\supp(\chi)\in \z^n$ is the set of all $l\in \z^n$ such that $c_l\ne 0$.

It follows that a $(\ga)_A$-linear morphism of group functors $\pi\colon (L^n\ga)_A\to (\ga^{\oplus\N})_A$ is given by a collection of $(\ga)_A$-linear morphisms of group functors $(\chi_1,\ldots,\chi_i,\ldots)$, where $\chi_i\colon (L^n\ga)_A\to (\ga)_A$, $i\geqslant 1$. Note that not any collection $(\chi_1,\ldots,\chi_i,\ldots)$ corresponds to a morphism of group functors $\pi$ as above. In particular, for any $\mu\in\Lambda_n$, the intersection of the set $X:=\bigcup\limits_{i\in \N}\supp(\chi_i)$ with $\z^n_{\mu}$ should be finite. This is because the restrictions of all but finitely many~$\chi_i$ to~$\Ab^{\z^n_{\mu}}$ are equal to zero. By Lemma~\ref{lemma:intersect}, we have that $X\subset(-\z^n_{\lambda})$ for some $\lambda\in\Lambda_n$. Therefore the kernel $\Ker\big((\pi(A)\big)$ contains the open subgroup $U_{\lambda}\subset\LL^n(A)$, whence $\Ker\big((\pi(A)\big)$ is open.

Finally, take an open subset $U_{\lambda}$, where $\lambda\in\Lambda_n$, and choose any bijection between~$-\z^n_{\lambda}$ and~$\N$. Then the collection of linear functionals $\sum\limits_{l\in\z^n}a_lt^l\mapsto a_l$, where $l\in-\z^n_{\lambda}$, defines a morphism of group functors $\pi\colon (L^n\ga)_A\to (\ga^{\oplus\N})_A$ such that $\Ker\big((\pi(A)\big)=U_{\lambda}$. This proves the lemma.
\end{proof}


\medskip

Here is our main object of study.

\begin{defin}\label{def:hom}
Denote by
$$
{\Hom^{\rm c}_A\big(\LL^n(A),\LL^m(A)\big)}
$$
the set of all continuous $A$-linear homomorphisms of additive topological groups~${\LL^n(A)\to\LL^m(A)}$. Denote by
$$
{\Hom^{\rm c,alg}_{A}\big(\LL^n(A),\LL^m(A)\big)}
$$
the set of all continuous homomorphisms of $A$-algebras $\LL^n(A)\to\LL^m(A)$.
\end{defin}

Also, let ${\Hom\big((L^n\ga)_A,(L^m\ga)_A\big)}$ be the set of all $(\ga)_A$-linear morphisms of group functors~${(L^n\ga)_A\to(L^m\ga)_A}$ and let $\Hom^{\rm rg}\big((L^n\ga)_A,(L^m\ga)_A\big)$ be the set of all \mbox{$(\ga)_A$-linear} morphisms of ring functors~$(L^n\ga)_A\to(L^m\ga)_A$.

\begin{prop}\label{prop:funccont}
Evaluation at $A$ defines the bijection
$$
{\Hom\big((L^n\ga)_A,(L^m\ga)_A)}\stackrel{\sim}\longrightarrow \Hom^{\rm c}_A\big(\LL^n(A),\LL^m(A)\big)\,,\qquad \alpha\longmapsto\alpha(A)\,.
$$
\end{prop}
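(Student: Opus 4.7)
The plan is to establish three things in turn: that evaluation at $A$ of any $(\ga)_A$-linear $\alpha$ is continuous, that the evaluation map is injective, and that it is surjective. Continuity of $\alpha(A)$ falls out of Lemma~\ref{lemma:basechar}: every basic open neighborhood of zero in $\LL^m(A)$ has the form $\Ker(\pi(A))$ for some $(\ga)_A$-linear $\pi \colon (L^m\ga)_A \to (\ga^{\oplus\N})_A$, and then $\pi \circ \alpha$ is again $(\ga)_A$-linear, so $\alpha(A)^{-1}\bigl(\Ker(\pi(A))\bigr) = \Ker\bigl((\pi \circ \alpha)(A)\bigr)$ is open by the same lemma applied in the other direction.

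For injectivity, I would use a universal-element computation. For each $\lambda \in \Lambda_n$, set $R := A[x_l \mid l \in \z^n_\lambda]$ and consider the universal element $\xi_\lambda := \sum_{l \in \z^n_\lambda} x_l t^l \in \LL^n(R)$, writing $\alpha_R(\xi_\lambda) = \sum_k h_k t^k$ with $h_k \in R$. Functoriality of $\alpha$ along the $A$-algebra maps $R \to B$, $x_l \mapsto b_l$, recovers $\alpha_B$ on every $f \in \LL^n(B)$ with support in $\z^n_\lambda$ from the $h_k$'s. The key claim is that each $h_k$ is $A$-linear, namely $h_k = \sum_l c_{k,l} x_l$ with $c_{k,l} \in A$ and only finitely many nonzero; once this is established, $c_{k,l}$ is precisely the $t^k$-coefficient of $\alpha_A(t^l)$, so $\alpha$ is determined by $\alpha(A)$. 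To prove the claim, pick $l_0 \in \z^n_\lambda$ not appearing as a variable of the polynomial $h_k$---possible since $h_k$ involves only finitely many variables---and apply the $A$-algebra endomorphism $R \to R$ sending $x_l \mapsto x_{l_0} x_l$. This map carries $\xi_\lambda$ to $x_{l_0}\xi_\lambda$, so combining functoriality of $\alpha$ with the $R$-linearity of $\alpha_R$ (a consequence of $(\ga)_A$-linearity for $B=R$) gives the polynomial identity $h_k(x_{l_0} \cdot x) = x_{l_0}\cdot h_k(x)$ in $R$, where $x_{l_0}\cdot x$ denotes the substitution $x_l \mapsto x_{l_0} x_l$. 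Since $x_{l_0}$ is algebraically independent of the variables actually occurring in $h_k$, matching monomials forces every term of $h_k$ of degree $\neq 1$ to vanish.

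For surjectivity, given a continuous $A$-linear $\beta \colon \LL^n(A)\to\LL^m(A)$, define $c_{k,l}\in A$ as the $t^k$-coefficient of $\beta(t^l)$, and for each $A$-algebra $B$ and each $f = \sum_l b_l t^l \in \LL^n(B)$ with $\supp(f)\subset\z^n_\lambda$ set $\alpha_B(f) := \sum_k \bigl(\sum_l c_{k,l} b_l\bigr) t^k$. Two support conditions on $(c_{k,l})$ must be verified, both following from continuity of $\beta$ combined with Lemma~\ref{lemma:intersect}: (a)~for every $k$, the set $\{l : c_{k,l}\neq 0\}$ is contained in some $-\z^n_{\lambda'(k)}$, ensuring that the inner sum is finite; (b)~for every $\lambda$, the set $K_\lambda := \bigcup_{l\in\z^n_\lambda}\{k : c_{k,l}\neq 0\}$ is contained in some $\z^m_\mu$, ensuring that the outer series defines an element of $\LL^m(B)$. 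Condition (a) is obtained by choosing $\mu'$ with $k\in -\z^m_{\mu'}$ and using $\beta(U_{\lambda'})\subset U_{\mu'}$ to force $c_{k,l}=0$ for $l\notin -\z^n_{\lambda'}$. Condition (b) follows by reducing $K_\lambda\cap(-\z^m_{\mu'})$ via continuity to a union, over the finite set $\z^n_\lambda\cap(-\z^n_{\lambda'})$, of intersections $\supp(\beta(t^l))\cap(-\z^m_{\mu'})$, each finite by Lemma~\ref{lemma:intersect}. The $B$-linearity and functoriality of $\alpha$ in $B$ are immediate from the formula, and $\alpha(A) = \beta$ follows because $f = \sum_l b_l t^l$ converges to $f$ in $\LL^n(A)$ by Lemma~\ref{cor:conv} and $\beta$ is continuous. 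The main technical obstacle is the $A$-linearity of each $h_k$ in the injectivity step: the delicate point is that $(\ga)_A$-linearity of $\alpha$ at the highly non-Noetherian algebra $R$, together with a clever choice of substitution, must be leveraged to force the polynomial shape, entirely without invoking any topology or completion on $\LL^n(R)$.
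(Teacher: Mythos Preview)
Your proof is correct, but it takes a noticeably more laborious route than the paper's. The paper's key simplification is to observe that Lemma~\ref{lemma:basechar} applies verbatim with any $A$-algebra $B$ in place of $A$, so $\alpha(B)$ is continuous for \emph{every} $B$, not just for $A$. Once this is noted, injectivity is immediate: by Lemma~\ref{cor:conv}$(ii)$ Laurent polynomials are dense in $\LL^n(B)$, so the continuous $B$-linear map $\alpha(B)$ is determined by its values $\alpha(B)(t^l)=\alpha(A)(t^l)$. Your universal-element argument with $R=A[x_l]$ and the substitution $x_l\mapsto x_{l_0}x_l$ is valid and clever, but it is doing by hand what continuity plus density give for free. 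Likewise for surjectivity: the paper simply sets $\alpha(B)(f):=\sum_l b_l\,\phi(t^l)$ and notes that this series converges in $\LL^m(B)$ because $\{t^l\}_{l\in\z^n_\lambda}$ tends to zero and $\phi$ is continuous, so $\{\phi(t^l)\}$ tends to zero in $\LL^m(A)$ and hence in $\LL^m(B)$. Your conditions (a) and (b) on the coefficient array $(c_{k,l})$ are exactly the unpacking of this convergence statement, and your verification via Lemma~\ref{lemma:intersect} is correct, but again the paper's formulation in terms of ``tends to zero'' is shorter. In summary: your approach trades the single observation ``$\alpha(B)$ is continuous for all $B$'' for explicit coefficient bookkeeping; both work, but you have made the problem harder than it is by deliberately avoiding topology on $\LL^n(B)$ for $B\neq A$.
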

\begin{proof}
Lemma~\ref{lemma:basechar} implies directly that for any $(\ga)_A$-linear morphism of group functors $\alpha\colon(L^n\ga)_A\to (L^m\ga)_A$, the evaluation $\alpha(A)\colon \LL^n(A)\to\LL^m(A)$ at $A$ is continuous. Clearly, the map $\alpha(A)$ is $A$-linear. Thus the map in the proposition is well-defined.

By the same reason, for any $A$-algebra $B$, the evaluation $\alpha(B)$ belongs to $\Hom_B^{\rm c}\big(\LL^n(B),\LL^m(B)\big)$. By Lemma~\ref{cor:conv}$(ii)$, the homomorphism~$\alpha(B)$ is defined uniquely by the values $\alpha(B)(bt^l)=b\,\alpha(B)(t^l)$, where $b\in B$, $l\in\z^n$. Therefore $\alpha(B)$ is defined uniquely by $\alpha(A)$. This proves that the map in the proposition is injective.

Finally, we prove the surjectivity. Take a homomorphism $\phi\in\Hom_A^{\rm c}\big(\LL^n(A),\LL^m(A)\big)$. For any $\lambda\in\Lambda_n$, the countable set $\{t^l\}$, ${l\in\z^n_{\lambda}}$, tends to zero in~$\LL^n(A)$. Since $\phi$ is continuous, the countable set $\{\phi(t^l)\}$, $l\in\Lambda_n$, tends to zero in $\LL^m(A)$. Let~$B$ be an $A$-algebra. For an element $f=\sum\limits_{l\in\z^n}b_lt^l\in\LL^n(B)$, put ${\alpha(B)(f):=\sum\limits_{l\in\z^n}b_l\phi(t^l)}$, which is a convergent series in the topological group $\LL^m(B)$, because the countable set $\{\phi(t^l)\}$, $l\in\Lambda_n$, tends to zero in $\LL^m(B)$ as well. This defines a morphism ${\alpha\in\Hom\big((L^n\ga)_A,(L^m\ga)_A)}$. By Lemma~\ref{cor:conv}$(i)$, we have $\alpha(A)=\phi$. This finishes the proof.
\end{proof}

\medskip

Now let us consider homomorphisms that respect products of iterated Laurent series. We will use the following simple observation.

\begin{lemma}\label{lem:mult}
Suppose that a homomorphism $\phi\in\Hom^{\rm c}_A\big(\LL^n(A),\LL^m(A)\big)$ satisfies
$$
\phi(t^l)=\phi(t_1)^{l_1}\ldots\phi(t_n)^{l_n}
$$
for any element $l=(l_1,\ldots,l_n)\in\z^n$. Then $\phi$ is a homomorphism of rings.
\end{lemma}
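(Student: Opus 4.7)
The strategy is a double density argument that bootstraps the given multiplicativity on monomials up to arbitrary iterated Laurent series, using the density of Laurent polynomials (Lemma~\ref{cor:conv}$(ii)$) and the fact recalled in Section~\ref{prelim} that multiplication by a fixed element is a continuous endomorphism of $\LL^m(A)$ (even though the full multiplication need not be jointly continuous when $n\geqslant 2$). Taking $l=0$ in the hypothesis and using the empty product convention gives $\phi(1)=1$, so $\phi$ is unital.

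The first step is to fix a monomial $t^l$, $l\in\z^n$, and verify that $\phi(t^l\cdot g)=\phi(t^l)\cdot\phi(g)$ for every $g\in\LL^n(A)$. Both sides are $A$-linear and continuous as functions of $g$: the left-hand side because $g\mapsto t^l g$ is a continuous endomorphism of $\LL^n(A)$ and $\phi$ is continuous; the right-hand side because multiplication by the fixed element $\phi(t^l)\in\LL^m(A)$ is continuous. On monomials the two maps agree, since the hypothesis gives
$$
\phi(t^l\cdot t^{l'})=\phi(t^{l+l'})=\phi(t_1)^{l_1+l'_1}\cdots\phi(t_n)^{l_n+l'_n}=\phi(t^l)\phi(t^{l'})\,.
$$
By $A$-linearity they agree on all Laurent polynomials, and by density (Lemma~\ref{cor:conv}$(ii)$) on all of $\LL^n(A)$.

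Now I would iterate the same argument in the other factor. For any Laurent polynomial $p=\sum_{l}a_l t^l$ (a finite sum) and any $g\in\LL^n(A)$, $A$-linearity of $\phi$ together with the previous step yields
$$
\phi(pg)=\sum_l a_l\,\phi(t^l g)=\sum_l a_l\,\phi(t^l)\phi(g)=\phi(p)\phi(g)\,.
$$
Finally, fix $g\in\LL^n(A)$ and consider the two maps $f\mapsto\phi(fg)$ and $f\mapsto\phi(f)\phi(g)$ from $\LL^n(A)$ to $\LL^m(A)$. Both are $A$-linear and continuous in $f$ (by the same reasoning as in the first step, applied now to multiplication by the fixed element $g$), and they coincide on Laurent polynomials. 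By density they coincide on all of $\LL^n(A)$, which proves multiplicativity.

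The only point one must watch is the failure of joint continuity of the product in $\LL^n(A)$ when $n\geqslant 2$; this never causes trouble here because at each density step exactly one of the two factors is held fixed, reducing everything to continuity of multiplication by a fixed element.
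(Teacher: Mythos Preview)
Your proof is correct and follows essentially the same approach as the paper: a double density argument using the density of Laurent polynomials (Lemma~\ref{cor:conv}$(ii)$) together with the continuity of multiplication by a fixed element. The paper phrases it slightly differently---first observing that the hypothesis gives multiplicativity on all Laurent polynomials, then extending by density once to polynomial-by-series products and once more to arbitrary products---but your breakdown into monomial-by-all, then polynomial-by-all, then all-by-all is a minor reordering of the same idea, and your explicit remark about avoiding joint continuity is exactly the point the paper's proof handles implicitly.
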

\begin{proof}
The condition of the lemma implies that $\phi$ respects products of Laurent polynomials. Recall that the product with a fixed iterated Laurent series is continuous,~see~\cite[Lem.\,3.5(i)]{GOMS}, and that the set of Laurent polynomials is dense in $\LL^n(A)$ by Lemma~\ref{cor:conv}$(ii)$.
Applying this twice, we obtain first that $\phi$ respects products of Laurent polynomials with iterated Laurent series and then that $\phi$ respects products of arbitrary elements of~$\LL^n(A)$.
\end{proof}

Proposition~\ref{prop:funccont} implies the following fact.

\begin{corol}\label{cor:funccontrg}
Evaluation at $A$ defines the bijection
$$
{\Hom^{\rm rg}\big((L^n\ga)_A,(L^m\ga)_A\big)}\stackrel{\sim}\longrightarrow \Hom^{\rm c,alg}_A\big(\LL^n(A),\LL^m(A)\big)\,,\qquad \alpha\longmapsto\alpha(A)\,.
$$
\end{corol}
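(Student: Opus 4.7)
The plan is to deduce the corollary as a direct consequence of Proposition~\ref{prop:funccont} together with Lemma~\ref{lem:mult}. Proposition~\ref{prop:funccont} already provides the bijection between $(\ga)_A$-linear morphisms of group functors $\alpha\colon (L^n\ga)_A\to (L^m\ga)_A$ and continuous $A$-linear homomorphisms $\phi\colon \LL^n(A)\to \LL^m(A)$, so the only point to establish is that this bijection restricts to one between the subsets of ring-functor morphisms and continuous $A$-algebra homomorphisms. In particular, both injectivity and well-definedness of the evaluation map in Corollary~\ref{cor:funccontrg} are inherited for free.

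The forward direction is immediate: if $\alpha$ is a morphism of ring functors, then $\alpha(A)$ is a ring homomorphism by definition, and by Proposition~\ref{prop:funccont} it is continuous and $A$-linear, hence $\alpha(A)\in \Hom^{\rm c,alg}_A\big(\LL^n(A),\LL^m(A)\big)$. For the reverse direction, start with $\phi\in \Hom^{\rm c,alg}_A\big(\LL^n(A),\LL^m(A)\big)$ and let $\alpha\colon (L^n\ga)_A\to (L^m\ga)_A$ be the unique $(\ga)_A$-linear morphism of group functors with $\alpha(A)=\phi$ provided by Proposition~\ref{prop:funccont}. I need to check that $\alpha(B)$ is a ring homomorphism for every $A$-algebra $B$. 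By the explicit formula in the proof of Proposition~\ref{prop:funccont}, for any $f=\sum_{l\in\z^n}b_l t^l\in \LL^n(B)$ one has $\alpha(B)(f)=\sum_{l\in\z^n} b_l\,\phi(t^l)$, so in particular $\alpha(B)(t^l)=\phi(t^l)$ and $\alpha(B)(t_i)=\phi(t_i)$ for all $l\in\z^n$ and $i=1,\ldots,n$. Since $\phi$ is multiplicative on $\LL^n(A)$, this yields
$$
\alpha(B)(t^l)\;=\;\phi(t^l)\;=\;\phi(t_1)^{l_1}\cdots\phi(t_n)^{l_n}\;=\;\alpha(B)(t_1)^{l_1}\cdots\alpha(B)(t_n)^{l_n}\,.
$$
Now $\alpha(B)$ is continuous and $B$-linear (again by Proposition~\ref{prop:funccont} applied over $B$, i.e.\ by the observation that $\alpha(B)\in \Hom^{\rm c}_B\big(\LL^n(B),\LL^m(B)\big)$ already noted in that proof), so Lemma~\ref{lem:mult} applies and shows that $\alpha(B)$ is a ring homomorphism. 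This is functorial in $B$ by construction of $\alpha$, so $\alpha\in\Hom^{\rm rg}\big((L^n\ga)_A,(L^m\ga)_A\big)$.

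Essentially, there is no real obstacle: the corollary is a short packaging of Proposition~\ref{prop:funccont} and Lemma~\ref{lem:mult}. The only mild point that must not be skipped is that multiplicativity of $\alpha(B)$ has to be verified for \emph{every} $A$-algebra $B$, not merely for $B=A$; this however is automatic from the formula $\alpha(B)(t^l)=\phi(t^l)$ and the ring-homomorphism property of $\phi$ itself, so Lemma~\ref{lem:mult} delivers the conclusion uniformly in $B$.
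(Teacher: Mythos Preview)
Your proof is correct and follows essentially the same approach as the paper's own proof: both invoke Proposition~\ref{prop:funccont} for well-definedness and injectivity, and then use Lemma~\ref{lem:mult} to upgrade $\alpha(B)$ to a ring homomorphism for every $A$-algebra $B$. You have simply made explicit the verification of the hypothesis of Lemma~\ref{lem:mult} (namely $\alpha(B)(t^l)=\phi(t^l)=\alpha(B)(t_1)^{l_1}\cdots\alpha(B)(t_n)^{l_n}$), which the paper leaves implicit.
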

\begin{proof}
By Proposition~\ref{prop:funccont}, the map is well-defined and injective. Also by Proposition~\ref{prop:funccont}, for any ${\phi\in \Hom^{\rm c,alg}_A\big(\LL^n(A),\LL^m(A)\big)}$, there is $\alpha\in{\Hom\big((L^n\ga)_A,(L^m\ga)_A\big)}$ such that $\alpha(A)=\phi$. Since $\phi$ is a homomorphism of rings, it follows from Lemma~\ref{lem:mult} that for any $A$-algebra $B$, the continuous $B$-linear map ${\alpha(B)\colon \LL^n(B)\to\LL^m(B)}$ is a homomorphism of rings as well, whence we have $\alpha\in{\Hom^{\rm rg}\big((L^n\ga)_A,(L^m\ga)_A\big)}$.
\end{proof}

\begin{rmk}
If a ring $A$ is of zero characteristic, that is, the natural homomorphism $\z\to A$ is injective, then any endomorphism of the group functor~$(\ga)_A$ is $(\ga)_A$-linear (see, e.g.,~\cite[Lem.\,6.20(i)]{GOMS}). This implies that in this case, one can omit in Lemma~\ref{lemma:basechar}, Proposition~\ref{prop:funccont}, and Corollary~\ref{cor:funccontrg} the condition that morphisms of group functors (or ring functors) are $(\ga)_A$-linear (see the beginning of the proof of Lemma~\ref{lemma:basechar}).
\end{rmk}

\medskip

Here is an application of Proposition~\ref{prop:funccont} and Corollary~\ref{cor:funccontrg}. For an arbitrary ring~$R$, by~$\M_{m\times n}(R)$ denote the set of all $(m\times n)$-matrices with entries in $R$. Consider elements of $R^n$ as columns.

\begin{defin}\label{def:ups}
For any $\phi\in\Hom_A^{\rm c,alg}\big(\LL^n(A),\LL^m(A)\big)$, put
$$
\Upsilon(\phi):=\big(\nu(\phi(t_1)),\ldots,\nu(\phi(t_n))\big)\in\M_{m\times n}\big(\uz(A)\big)\,.
$$
\end{defin}

\begin{prop}\label{prop:valuation}
For any $\phi\in\Hom_A^{\rm c,alg}\big(\LL^n(A),\LL^m(A)\big)$ and any $f\in\LL^n(A)^*$, there is an equality in $\uz(A)^n$
$$
\nu\big(\phi(f)\big)=\Upsilon(\phi)\cdot \nu(f)\,.
$$
\end{prop}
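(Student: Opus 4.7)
The plan is to reduce the statement to showing that $\phi$ carries the kernel of $\nu$ into the kernel of $\nu$, and then to exploit the absolute connectedness of $(L^n\gm)^0$ recalled in the preliminaries.

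First I would reduce to the case where $\nu(f)$ is constant. Since $\nu(f)\in\uz(A)^n$ is a locally constant $\z^n$-valued function on $\Spec(A)$, there is a finite decomposition $A\simeq \prod_{i=1}^N A_i$ on which $\nu(f)|_{A_i}$ takes a constant value $l_i\in \z^n$. All the data in the statement---the rings $\LL^p(-)$, the valuation $\nu$, the homomorphism $\phi$, and the matrix $\Upsilon(\phi)$---respect finite products of rings, so it is enough to verify the identity on each factor. Hence I may assume $\nu(f)=l=(l_1,\ldots,l_n)\in\z^n$.

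Next, since $\nu(t^l)=l$ and $\nu$ is a group homomorphism, the element $u:=ft^{-l}$ lies in $\LL^n(A)^*$ with $\nu(u)=0$. Writing $f=t^l u$ and using that $\phi$ is a ring homomorphism gives
$$
\phi(f)=\phi(t_1)^{l_1}\cdots\phi(t_n)^{l_n}\cdot\phi(u),
$$
and applying the group homomorphism $\nu$ yields
$$
\nu(\phi(f))=l_1\,\nu(\phi(t_1))+\cdots+l_n\,\nu(\phi(t_n))+\nu(\phi(u))=\Upsilon(\phi)\cdot l+\nu(\phi(u)).
$$
So the proposition reduces to the claim that $\nu(\phi(u))=0$ for every $u\in\LL^n(A)^*$ with $\nu(u)=0$, i.e.\ that $\phi$ sends $(L^n\gm)^0(A)$ into $(L^m\gm)^0(A)$.

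For this last point I would lift $\phi$ to a morphism of functors via Corollary~\ref{cor:funccontrg}: there is a morphism of ring functors $\alpha\colon (L^n\ga)_A\to (L^m\ga)_A$ with $\alpha(A)=\phi$. Restricting $\alpha$ to invertible elements and composing with $\nu$ produces a morphism of group functors
$$
\Psi\colon (L^n\gm)^0_A\lrto (\uz^m)_A.
$$
Since $(L^n\gm)^0$ is absolutely connected over $\z$ and $(\uz^m)_A$ is \'etale-constant, for every point $\p\in\Spec(A)$ the base change $\Psi_{\kappa(\p)}$ is a group homomorphism from a connected ind-scheme to a discrete group, hence constant; because $\Psi$ sends the identity section to $0$, this constant is $0$. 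As an element of $\uz^m(A)$ is determined by its values at the points of $\Spec(A)$, it follows that $\Psi=0$, and in particular $\nu(\phi(u))=0$, which finishes the proof. The main obstacle is precisely this last step---passing from $\phi$ back to the geometric picture to invoke connectedness, while handling the possible disconnectedness of $\Spec(A)$ by reducing to residue fields and using that $\uz^m$ is detected pointwise.
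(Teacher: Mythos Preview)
Your proof is correct and follows essentially the same strategy as the paper: lift $\phi$ to a morphism of ring functors via Corollary~\ref{cor:funccontrg}, then use absolute connectedness of $(L^n\gm)^0$ to kill the composition $(L^n\gm)^0_A \to (L^m\gm)_A \xrightarrow{\nu} \uz^m_A$, so that the statement reduces to the monomial part. The only difference is that the paper invokes \cite[Prop.\,5.23]{GOMS} directly over $A$ (any morphism of group functors from an absolutely connected ind-affine scheme over $A$ to $\uz_A$ is zero) and then reads off the induced map on the quotient $\uz^n_A\simeq (L^n\gm)_A/(L^n\gm)^0_A\to\uz^m_A$, so your passage to residue fields and the pointwise ``connected-to-discrete'' argument is an unnecessary, though harmless, detour.
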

\begin{proof}
By Corollary~\ref{cor:funccontrg}, there is a unique $\alpha\in\Hom^{\rm rg}\big((L^n\ga)_A,(L^m\ga)_A\big)$ such that~${\alpha(A)=\phi}$. Since $\alpha$ is a morphism of ring functors, $\alpha$ induces a morphism of group functors ${(L^n\gm)_A\to(L^m\gm)_A}$, which we denote also by $\alpha$ for simplicity.

Let us show that the morphism $\alpha\colon (L^n\gm)_A\to (L^m\gm)_A$ sends the group subfunctor $(L^n\gm)^0_A\subset (L^n\gm)_A$ to the group subfunctor $(L^m\gm)^0_A\subset (L^m\gm)_A$. Since the functor~$(L^n\gm)^0$ is represented by an absolutely connected ind-affine scheme over~$\z$, the group functor $(L^n\gm)^0_A$ is represented by an absolutely connected ind-affine scheme over $A$, see~\cite[Def.\,5.18]{GOMS} and~\cite[Prop.\,6.13(iv)]{GOMS}. It follows that any morphism of group functors $(L^n\gm)^0_A\to \uz_A$ over $A$ is equal to zero, see~\cite[Prop.\,5.23]{GOMS}. Therefore the composition
$$
(L^n\gm)^0_A\stackrel{\alpha}\longrightarrow (L^m\gm)_A\stackrel{\nu}\longrightarrow \uz^m_A
$$
is also equal to zero. Hence~$\alpha$ sends~$(L^n\gm)^0_A$ to $(L^m\gm)^0_A$.

We obtain that $\alpha$ defines a homomorphism of group functors from the quotient
$$
\nu\,:\,(L^n\gm)_A/(L^n\gm)_A^0\stackrel{\sim}\longrightarrow \uz^n_A
$$
to the quotient
$$
\nu\,:\,(L^m\gm)_A/(L^m\gm)_A^0\stackrel{\sim}\longrightarrow \uz^m_A\,,
$$
which is given by a matrix in $\M_{m\times n}\big(\uz(A)\big)$. It is easy to see that this matrix is nothing but $\Upsilon(\phi)$, which finishes the proof.
\end{proof}

\begin{corol}\label{cor:hommon}
When $m=n$, the map
$$
\Upsilon\,:\, \Hom^{\rm c,alg}_A\big(\LL^n(A),\LL^n(A)\big)\longrightarrow \M_{n\times n}\big(\uz(A)\big)
$$
is a homomorphism of monoids. In particular, if an endomorphism ${\phi\in\Hom^{\rm c,alg}_A\big(\LL^n(A),\LL^n(A)\big)}$ is invertible, then the matrix ${\Upsilon(\phi)\in\M_{n\times n}\big(\uz(A)\big)}$ is invertible as well.
\end{corol}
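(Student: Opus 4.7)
The plan is to deduce the corollary directly from Proposition~\ref{prop:valuation} by applying it to compositions of endomorphisms.

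First, I would verify multiplicativity. Take two endomorphisms ${\phi,\psi\in\Hom_A^{\rm c,alg}\big(\LL^n(A),\LL^n(A)\big)}$. For each $i=1,\ldots,n$, the parameter $t_i\in\LL^n(A)^*$ is invertible, so is $\psi(t_i)$, and so is $\phi(\psi(t_i))$, since $\phi$ and $\psi$ are homomorphisms of rings. Applying Proposition~\ref{prop:valuation} twice gives
$$
\nu\big((\phi\circ\psi)(t_i)\big)=\nu\big(\phi(\psi(t_i))\big)=\Upsilon(\phi)\cdot\nu\big(\psi(t_i)\big)=\Upsilon(\phi)\cdot\Upsilon(\psi)\cdot\nu(t_i)\,.
$$
Since $\nu(t_i)\in\uz^n(A)$ is the $i$-th standard basis column $e_i$, the right hand side equals the $i$-th column of the matrix product $\Upsilon(\phi)\cdot\Upsilon(\psi)$. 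By Definition~\ref{def:ups}, the left hand side is the $i$-th column of $\Upsilon(\phi\circ\psi)$. Varying $i$, we obtain
$$
\Upsilon(\phi\circ\psi)=\Upsilon(\phi)\cdot\Upsilon(\psi)\,.
$$

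Next, I would check that $\Upsilon$ sends the identity to the identity. For $\phi=\Id$ we have $\phi(t_i)=t_i$, so $\nu(\phi(t_i))=\nu(t_i)=e_i$, hence $\Upsilon(\Id)=I_n$ is the identity matrix in $\M_{n\times n}\big(\uz(A)\big)$. Together with the previous step, this shows that $\Upsilon$ is a homomorphism of monoids.

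Finally, the second assertion is formal: if $\phi$ is invertible with inverse $\phi^{-1}\in\Hom_A^{\rm c,alg}\big(\LL^n(A),\LL^n(A)\big)$, then applying the monoid homomorphism $\Upsilon$ to the identities $\phi\circ\phi^{-1}=\Id=\phi^{-1}\circ\phi$ yields
$$
\Upsilon(\phi)\cdot\Upsilon(\phi^{-1})=I_n=\Upsilon(\phi^{-1})\cdot\Upsilon(\phi)\,,
$$
so $\Upsilon(\phi)\in\M_{n\times n}\big(\uz(A)\big)$ is invertible with inverse $\Upsilon(\phi^{-1})$. There is no substantive obstacle here; the entire argument is a direct consequence of Proposition~\ref{prop:valuation} applied column by column to the standard basis vectors $\nu(t_i)=e_i$.
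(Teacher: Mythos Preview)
Your argument is correct and is exactly the intended deduction: the paper states this as an immediate corollary of Proposition~\ref{prop:valuation} without giving a separate proof, and your column-by-column application of that proposition to the $\nu(t_i)=e_i$ is precisely how one reads it off.
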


\section{Continuous homomorphisms and changes of parameters}

In this section, we describe all possible changes of parameters $t_1,\ldots,t_n$ under continuous homomorphisms of $A$-algebras $\phi\colon\LL^n(A)\to\LL^m(A)$, that is, we describe all possible collections $\big(\phi(t_1),\ldots,\phi(t_n)\big)$ of elements in $\LL^m(A)^*$, see Theorem~\ref{prop:contchange}. Such a collection determines uniquely the initial homomorphism~$\phi$.

\medskip

We will use the following elementary fact on matrices with integral entries.

\begin{lemma}\label{lem:upper}
For any matrix $M\in\M_{m\times n}(\z)$, the following conditions are equivalent:
\begin{itemize}
\item[(i)]
the map
$$
M\,:\,\z^n\longrightarrow \z^m\,,\qquad l\longmapsto M\cdot l\,,
$$
strictly preserves the lexicographical order, that is, if $l>0$, then $M\cdot l>0$;
\item[(ii)]
the matrix $M$ is in column echelon form with positive leading entries, that is, the matrix~$M$ has a form
$$
\begin{pmatrix}
x_{11}& x_{12}&\ldots  &x_{1n} \\
\vdots& \vdots& &\vdots \\
x_{p_1, 1}&\vdots&&\vdots\\
0&x_{p_2,2}&&\vdots&\\
\vdots& 0&&x_{p_n,n}\\
\vdots&\vdots&&0 \\
\vdots&\vdots&&\vdots \\
0&0&\ldots&0
\end{pmatrix}
$$
where $1\leqslant p_1<\ldots <p_n\leqslant m$ and $x_{p_i,i}>0$ for any $i$, $1\leqslant i\leqslant n$;
\item[(iii)]
the map $M\colon \z^n\to\z^m$ is injective and for any $\lambda\in\Lambda_n$, there is $\mu\in\Lambda_m$ such that $M\cdot \z^n_{\lambda}\subset \z^m_{\mu}$.
\end{itemize}
\end{lemma}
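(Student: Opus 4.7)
The strategy is to show (i) $\Leftrightarrow$ (ii) directly and then close the loop by (ii) $\Rightarrow$ (iii) $\Rightarrow$ (i). The key preliminary remark is that a nonzero $l\in\z^n$ is positive in the lexicographical order if and only if its last nonzero coordinate is positive: letting $i^*$ be maximal with $l_{i^*}\neq 0$, one has $l>0$ iff $l_{i^*}>0$.

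For (ii) $\Rightarrow$ (i), when $M$ is in column echelon form with positive leading entries at rows $p_1<\ldots<p_n$, a direct computation gives $(Ml)_r=0$ for $r>p_{i^*}$ (columns $j>i^*$ contribute $0$ because $l_j=0$, and columns $j\leqslant i^*$ contribute $0$ because $r>p_{i^*}\geqslant p_j$), while $(Ml)_{p_{i^*}}=x_{p_{i^*},i^*}\cdot l_{i^*}$ has the same sign as $l_{i^*}$. For (i) $\Rightarrow$ (ii), applying (i) to each standard basis vector $e_j$ shows that the last nonzero entry of column $j$ is positive; call its row $q_j$. To see $q_1<\ldots<q_n$, suppose $j<j'$ with $q_j\geqslant q_{j'}$ and apply (i) to $v=e_{j'}-Ne_j$, which is positive in lex since its last nonzero coordinate is $+1$ at position $j'$. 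Then $Mv$ vanishes at rows strictly greater than $q_j$, and for $N$ sufficiently large (in the case $q_j=q_{j'}$, chosen to dominate $x_{q_{j'},j'}$), its entry at row $q_j$ is negative, so $Mv<0$, contradicting (i).

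For (ii) $\Rightarrow$ (iii), injectivity follows from linear independence of columns with distinct leading rows. To build $\mu$ with $M(\z^n_\lambda)\subset\z^m_\mu$, exploit the triangular structure to recover $l_n,l_{n-1},\ldots$ successively from the coordinates $l'_{p_n},l'_{p_{n-1}},\ldots$ of $l'=Ml$ (by dividing by the nonzero $x_{p_i,i}$, when integrality holds). Define $\mu_r\colon\z^{m-r}\to\z$ row by row: put $\mu_r\equiv 0$ for $r>p_n$; at a non-leading row $r\leqslant p_n$, put $\mu_r$ equal to the exact expression $\sum_{j\geqslant J(r)}M_{r,j}l_j$ in the reconstructed $l_j$'s, where $J(r):=\min\{j:p_j\geqslant r\}$; and at a leading row $r=p_i$, use the lower bound $x_{p_i,i}\,\lambda_i(l_{i+1},\ldots,l_n)+\sum_{j>i}M_{p_i,j}l_j$. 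Whenever the reconstruction of the $l_j$'s fails (divisibility or support conditions), set $\mu_r$ to $0$: such higher coordinates are not in the image of $M$ and therefore do not affect the containment.

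For (iii) $\Rightarrow$ (i), given $l>0$ with last nonzero coordinate at position $i^*$, construct $\lambda\in\Lambda_n$ so that $\{Nl\mid N\geqslant 1\}\subset\z^n_\lambda$: take $\lambda_s\equiv 0$ for $s>i^*$, set $\lambda_{i^*}$ to send the origin to $l_{i^*}$, and for $s<i^*$ put $\lambda_s(y_{s+1},\ldots,y_n):=\lfloor y_{i^*}\,l_s/l_{i^*}\rfloor$, so that at arguments arising from $Nl$ (where $y_{i^*}=Nl_{i^*}$) the value equals $Nl_s$. By (iii), $M(Nl)=N\cdot Ml\in\z^m_\mu$ for some fixed $\mu$ and all $N\geqslant 1$. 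Injectivity rules out $Ml=0$; if instead $Ml<0$ with last nonzero entry at row $r$, then $(NMl)_s=0$ for $s>r$ while $(NMl)_r\to-\infty$, violating the fixed lower bound $\mu_r(0,\ldots,0)$ for large $N$. Hence $Ml>0$. The main obstacle in the whole argument is the inductive bookkeeping in the construction of $\mu$ in (ii) $\Rightarrow$ (iii); the other three implications are relatively direct once the lex-order characterization of positivity is in hand.
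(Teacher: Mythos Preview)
Your argument is correct and follows essentially the same route as the paper: the cycle (i)$\Rightarrow$(ii)$\Rightarrow$(iii)$\Rightarrow$(i), with the key observation in (iii)$\Rightarrow$(i) that $l>0$ is equivalent to $l\neq 0$ together with $\{Nl:N\geqslant 1\}\subset\z^n_\lambda$ for some $\lambda$. The differences are stylistic rather than structural: the paper handles (i)$\Rightarrow$(ii) by induction on $n$ and (ii)$\Rightarrow$(iii) by induction on $m$, whereas you do both steps directly---using the ``last nonzero coordinate'' description of lexicographic positivity for the first, and an explicit row-by-row definition of $\mu$ (via back-substitution through the leading rows $p_n,\ldots,p_1$) for the second. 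Your explicit construction of $\mu$ is slightly more laborious to verify than the paper's clean induction on $m$, but it has the virtue of producing $\mu$ in closed form; conversely, the paper's inductive proof avoids the bookkeeping about reconstruction failures that you have to dispose of with the ``set $\mu_r$ to $0$ otherwise'' clause.
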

\begin{proof}
$(i)\Rightarrow (ii)$  The proof is by induction on $n$. The base $n=1$ is obvious. Let us make the induction step from $n-1$ to $n$, where $n\geqslant 2$. The inequality
${(0,\ldots,0,1,0)<(0,\ldots,0,0,1)}$ implies that either $p_n>p_{n-1}$ and $x_{p_n,n}>0$, or $p_n=p_{n-1}=p$ and ${x_{p,n}>x_{p,n-1}}$. Suppose the second case holds. Since $x_{p,n-1}\ne 0$ (actually, $x_{p,n-1}> 0$), there is $k\in\z$ such that $kx_{p,n-1}+x_{p,n}<0$. Then we obtain a contradiction, because ${(0,\ldots,0,k,1)>0}$ and $M\cdot (0,\ldots,0,k,1)^{\top}<0$.

$(ii)\Rightarrow (iii)$ It follows from the explicit form of the matrix $M$ in item~$(ii)$ that the rank of $M$ (over $\Q$) equals $n$. Thus the map ${M\colon\z^n\to\z^m}$ is injective.

Let us prove by induction on $m$ that the second condition in item~$(iii)$ also holds true. We consider the base $m=0$, in which case the statement is satisfied, being void. Let us make an induction step from $m-1$ to $m$, where $m\geqslant 1$.

Suppose that $p_n<m$. Then the image of the map $M\colon \z^n\to\z^m$ is contained in the subgroup ${\z^{m-1}=\{(*,\ldots,*,0)\}}$. By the induction hypothesis, there is $\kappa\in\Lambda_{m-1}$ such that $M\cdot \z^n_{\lambda}\subset\z^{m-1}_{\kappa}$. We let $\mu=(\mu',0)\in\Lambda_m$, where $\mu'\colon\z\to\Lambda_{m-1}$ is any map such that $\mu'(0)=\kappa$.

Now suppose that $p_n=m$. By definition, we have $\lambda=(\lambda',\lambda_n)$, where ${\lambda'\colon\z\to\Lambda_{n-1}}$ and $\lambda_n\in\z$. Put $\mu_m:=\lambda_nx_{mn}\in\z$. Let $M'$ be an \mbox{$(m-1)\times(n-1)$-matrix} which is equal to $M$ without the last row and the last column. By the induction hypothesis, for any $l\geqslant\lambda_n$, there is $\rho(l)\in\Lambda_{m-1}$ such that $M'\cdot \z^{n-1}_{\lambda'(l)}\subset\z^{m-1}_{\rho(l)}$. Now let $\mu'\colon\z\to\Lambda_{m-1}$ be a map such that for any $l\geqslant \lambda_n$, the element $\mu'(lx_{mn})\in\Lambda_{m-1}$ satisfies the condition
$$
\z^{m-1}_{\rho(l)}+l\cdot (x_{1n},\ldots,x_{m-1,n})\subset \z^{m-1}_{\mu'(lx_{mn})}\,.
$$
Then we obtain $M\cdot\z^n_{\lambda}\subset\z^m_{\mu}$, where $\mu=(\mu',\mu_m)$.

$(iii)\Rightarrow (i)$ Note that an element $l\in\z^n$ satisfies $l>0$ if and only if $l\ne 0$ and there is $\lambda\in\Lambda_n$ such that the set ${\{kl\mid k\in\N\}}$ is contained in $\z^n_{\lambda}$ (this can be shown easily by induction on $n$). This gives the needed implication.
\end{proof}

\begin{defin}\label{def:plus}
Denote by
$$
\M_{m\times n}^+(\z)\subset \M_{m\times n}(\z)
$$
the set of all matrices that satisfy the equivalent conditions of Lemma~\ref{lem:upper}.
Similarly, let~$\M^+_{m\times n}\big(\uz(A)\big)$ be the set of all \mbox{$(m\times n)$-matrices} with entries in $\uz(A)$ such that point-wise on $\Spec(A)$ these maitrices belong to~$\M^+_{m\times n}(\z)$.
\end{defin}

Note that $n\leqslant m$ whenever $\M_{m\times n}^+(\z)$ is non-empty.

\medskip

Let us say that a set $\{f_i\}$, $i\in \N$, of iterated Laurent series in $\LL^n(A)$ is {\it bounded} if there is $\lambda\in\Lambda_n$ such that for all $i\in\N$, we have ${\supp(f_i)\subset \z^n_{\lambda}}$. Clearly, the pair-wise product of two bounded sets is also bounded.

\begin{lemma}\label{lem:zerobound}
\hspace{0cm}
\begin{itemize}
\item[(i)]
If a countable set $\{f_i\}$, $i\in\N$, of iterated Laurent series tends to zero, then this set is bounded.
\item[(ii)]
If a countable set $\{f_i\}$, $i\in\N$, of iterated Laurent series tends to zero and a countable set $\{g_i\}$, $i\in\N$, of iterated Laurent series is bounded, then the diagonal product~$\{f_ig_i\}$, $i\in\N$, tends to zero.
\end{itemize}
\end{lemma}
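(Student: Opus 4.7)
\medskip

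My plan is to prove both parts by translating the convergence condition into set-theoretic conditions on supports and then applying Lemma~\ref{lemma:intersect} together with the basic properties of addition in $\z^n_\lambda$ recalled in Section~\ref{prelim}.

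For part $(i)$, I will consider the union $X:=\bigcup_{i\in\N}\supp(f_i)\subset\z^n$ and show that $X\subset\z^n_\lambda$ for some $\lambda\in\Lambda_n$ by verifying criterion $(ii)$ of Lemma~\ref{lemma:intersect}. Fix $\mu\in\Lambda_n$. Because $\{f_i\}$ tends to zero, there is a finite subset $I_\mu\subset\N$ such that $\supp(f_i)\cap(-\z^n_\mu)=\varnothing$ for all $i\notin I_\mu$, so $X\cap(-\z^n_\mu)=\bigcup_{i\in I_\mu}\big(\supp(f_i)\cap(-\z^n_\mu)\big)$. On the other hand, each individual $f_i$ lies in $\LL^n(A)$, hence $\supp(f_i)\subset\z^n_{\lambda_i}$ for some $\lambda_i\in\Lambda_n$, and then $\supp(f_i)\cap(-\z^n_\mu)$ is finite because the summation map $\z^n_{\lambda_i}\times\z^n_\mu\to\z^n$ has finite fibers (the fiber over $0$ is in bijection with this intersection). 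Thus $X\cap(-\z^n_\mu)$ is a finite union of finite sets, hence finite, and Lemma~\ref{lemma:intersect} yields the desired $\lambda$.

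For part $(ii)$, I will work directly with the convergence criterion. Fix $\lambda\in\Lambda_n$. Since $\{g_i\}$ is bounded, there is $\rho\in\Lambda_n$ with $\supp(g_i)\subset\z^n_\rho$ for all $i$. From the formula for the product of iterated Laurent series we have $\supp(f_ig_i)\subset\supp(f_i)+\z^n_\rho$. By the property recalled in Section~\ref{prelim}, there is $\sigma\in\Lambda_n$ with $\z^n_\lambda+\z^n_\rho\subset\z^n_\sigma$, so $-\z^n_\sigma\supset -(\z^n_\lambda+\z^n_\rho)=-\z^n_\lambda-\z^n_\rho$. A direct check shows that $\supp(f_i)\cap(-\z^n_\sigma)=\varnothing$ implies $\big(\supp(f_i)+\z^n_\rho\big)\cap(-\z^n_\lambda)=\varnothing$, hence $\supp(f_ig_i)\cap(-\z^n_\lambda)=\varnothing$. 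Since $\{f_i\}$ tends to zero, this happens for all but finitely many $i$, which proves that $\{f_ig_i\}$ tends to zero.

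I do not expect any serious obstacle in either part: both arguments are essentially bookkeeping with supports, and the only non-trivial ingredients are Lemma~\ref{lemma:intersect} for $(i)$ and the finite-fiber/containment properties of sums of sets $\z^n_\lambda$ for $(ii)$. The most delicate point to get right is the combination of the two observations in part $(i)$ — namely, that although $\{f_i\}$ tending to zero only controls the intersections $\supp(f_i)\cap(-\z^n_\mu)$ cofinitely, the remaining finitely many intersections are automatically finite because each $f_i$ is itself an iterated Laurent series.
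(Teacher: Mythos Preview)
Your proposal is correct and follows essentially the same approach as the paper's proof: for~$(i)$ both arguments set $X=\bigcup_i\supp(f_i)$, verify criterion~$(ii)$ of Lemma~\ref{lemma:intersect} by splitting into the finitely many exceptional indices and the cofinitely many empty intersections, and conclude; for~$(ii)$ both choose a common bound $\z^n_\rho$ for the $\supp(g_i)$, pick $\sigma$ with $\z^n_\lambda+\z^n_\rho\subset\z^n_\sigma$, and observe that the condition $\supp(f_i)\cap(-\z^n_\sigma)=\varnothing$ forces $\supp(f_ig_i)\cap(-\z^n_\lambda)=\varnothing$. The only difference is cosmetic: the paper phrases the convergence condition as $0\notin\supp(f_i)+\z^n_\mu$ and writes the key step as a chain of inclusions $\supp(f_i)+\z^n_\rho\supset\supp(f_ig_i)+\z^n_\mu$, whereas you phrase it via intersections with $-\z^n_\lambda$; these are equivalent reformulations.
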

\begin{proof}
$(i)$
Take an element $\mu\in\Lambda_n$. Note that for any $i\in \N$, the intersection ${\supp(f_i)\cap (-\z^n_{\mu})}$ is finite and this intersection is empty for all but finitely many $i\in\N$. This implies finiteness of the intersection $X\cap(-\z^n_{\mu})$, where $X:=\bigcup\limits_{i\in \N}\supp(f_i)$. Now, the statement follows from Lemma~\ref{lemma:intersect}.

$(ii)$
Let $\lambda\in\Lambda_n$ be an element such that $\supp(g_i)\subset \z^n_{\lambda}$ for all $i\in\N$. Given an element $\mu\in\Lambda_n$, take an element $\rho\in\Lambda_n$ such that ${\z^n_{\lambda}+\z^n_{\mu}\subset\z^n_{\rho}}$. Since the countable set $\{f_i\}$, $i\in\N$, tends to zero, all but finitely many $i\in\N$ satisfy $0\notin\supp(f_i)+\z^n_{\rho}$. The embeddings
$$
\supp(f_i)+\z^n_{\rho}\supset \supp(f_i)+\z^n_{\lambda}+\z^n_{\mu}\supset \supp(f_ig_i)+\z^n_{\mu}
$$
imply that all but finitely many $i\in\N$ satisfy $0\notin \supp(f_ig_i)+\z^n_{\mu}$.
\end{proof}

\begin{lemma}\label{lemma:coundpower}
For any element $f\in\LL^n(A)^*$ with $\nu(f)=0$, the set $\{f^k\}$, $k\in\Z$, is bounded.
\end{lemma}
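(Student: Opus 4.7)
The plan is to reduce $f$ to a structural form and then bound the supports of $f^k$ piecewise. Using the product decomposition $A\simeq\prod_{i=1}^N A_i$ associated with $f\in\LL^n(A)^*$ together with the elementary fact that any finite collection of sets $\z^n_{\lambda_i}$ is contained in a single $\z^n_\lambda$, I first reduce to the case where $A$ does not decompose, so that the hypothesis $\nu(f)=0$ takes the explicit form recalled in the preliminaries: the constant term $a_0$ is invertible, every coefficient $a_l$ with $l<0$ is nilpotent, and $f_-:=\sum_{l<0}a_lt^l$ is nilpotent, say $f_-^N=0$.

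Set $u:=a_0^{-1}\sum_{l>0}a_lt^l$ and $v:=a_0^{-1}f_-$, so that $f=a_0(1+u+v)$ with $\supp(u)\subset\{l>0\}$ and $v^N=0$. The element $1+u$ is a unit in $\LL^n(A)$ (its constant term~$1$ is invertible and all its coefficients at indices $l<0$ vanish, so the structural criterion is satisfied trivially), and one writes $f=a_0(1+u)(1+w)$ with $w:=v(1+u)^{-1}$; then $w^N=v^N(1+u)^{-N}=0$. Since the factors commute, $f^k=a_0^k(1+u)^k(1+w)^k$ for every $k\in\Z$. The nilpotent factor is immediate: $(1+w)^k=\sum_{j=0}^{N-1}\binom{k}{j}w^j$ is supported in the $k$-independent finite union $\bigcup_{j=0}^{N-1}\supp(w^j)\subset\z^n_{\lambda_w}$ for some $\lambda_w\in\Lambda_n$.

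The main task is a uniform support bound for $(1+u)^k$. I claim that $u^j\to 0$ in $\LL^n(A)$ as $j\to\infty$. Granting this, Lemma~\ref{lem:zerobound}(i) produces $\lambda_u\in\Lambda_n$ with $\supp(u^j)\subset\z^n_{\lambda_u}$ for all $j\geqslant 0$. For $k\geqslant 0$ the binomial expansion $(1+u)^k=\sum_{j=0}^k\binom{k}{j}u^j$ is supported in $\z^n_{\lambda_u}$. For $k<0$, the telescoping identity $(1+u)\sum_{j=0}^M(-u)^j=1-(-u)^{M+1}$ combined with the continuity of multiplication by the fixed element $1+u$ and with $u^j\to 0$ identifies $(1+u)^{-1}$ with $1+\tilde u$, where $\tilde u:=\sum_{j\geqslant 1}(-u)^j$ has strictly positive support (a sum of positives is positive in the lexicographic order). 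Applying the claim to $\tilde u$ gives $\lambda_{\tilde u}\in\Lambda_n$ bounding $\{\tilde u^i\}_{i\geqslant 0}$, and then $(1+u)^k=(1+\tilde u)^{-k}=\sum_{i=0}^{-k}\binom{-k}{i}\tilde u^i$ for $k<0$ is a finite sum supported in $\z^n_{\lambda_{\tilde u}}$. Taking $\lambda\in\Lambda_n$ majorizing $\lambda_u$, $\lambda_{\tilde u}$, $\lambda_w$ and using $\z^n_\lambda+\z^n_\lambda\subset\z^n_{\lambda'}$, I obtain $\supp(f^k)\subset\z^n_{\lambda'}$ uniformly in $k$.

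The main obstacle is proving $u^j\to 0$ for $u$ of strictly positive support. Unwinding the definition of the topology, this reduces to the purely combinatorial statement that for $S:=\supp(u)\subset\{l>0\}\cap\z^n_\lambda$ and any $\mu\in\Lambda_n$, the $j$-fold sumset $S^{+j}$ is disjoint from $-\z^n_\mu$ once $j$ is sufficiently large. I would argue by induction on $n$. The base $n=1$ is immediate, since $S\subset\Z_{\geqslant 1}$ gives $S^{+j}\subset\Z_{\geqslant j}$. For the induction step, split $S=S_0\sqcup S_{\geqslant 1}$ by the last coordinate, identifying $S_0$ with a subset of $\{l'>0\}$ in $\z^{n-1}$. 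Any $l=x_1+\cdots+x_j\in S^{+j}\cap(-\z^n_\mu)$ has $l_n\leqslant -\mu_n$, which forces $|\{i:x_i\in S_{\geqslant 1}\}|\leqslant -\mu_n$; the first $(n-1)$ coordinates of these boundedly-many summands lie in a fixed $\z^{n-1}_{\kappa^*}$, so after a translation depending only on $l_n\in\{0,1,\ldots,-\mu_n\}$ the projection of the remaining $\geqslant j+\mu_n$ summands from $S_0$ is forced into a set of the form $-\z^{n-1}_{\tau(l_n)}$. Varying $l_n$ over finitely many values and applying the induction hypothesis to $S_0$ for each $\tau(l_n)$ produces a threshold $J^*$ such that $j\geqslant J^*-\mu_n$ yields a contradiction, completing the induction.
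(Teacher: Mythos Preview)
Your argument is correct. The combinatorial induction showing that $u^j\to 0$ for $u$ of strictly positive support goes through: the key observations---that summands from $S_{\geqslant 1}$ are bounded in number by $-\mu_n$ and have last coordinate in $\{1,\ldots,-\mu_n\}$, so their first $(n-1)$ coordinates sit in a fixed $\z^{n-1}_{\kappa^*}$ depending only on $\lambda$ and $\mu_n$---are correct, and the rest follows cleanly by the inductive hypothesis applied to $S_0$.

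That said, your route is considerably more elaborate than the paper's. The paper does not split $g$ into a strictly-positive part $u$ and a nilpotent part $v$; it keeps the single element $g=a_0^{-1}f-1$ (so $a_0=c$ and $g$ has zero constant term with nilpotent negative tail) and invokes~\cite[Prop.\,3.8]{GOMS} directly to conclude $g^i\to 0$. Then $\supp(f^k)\subset\bigcup_{i\in\N}\supp(g^i)$ for every $k\in\Z$ (positive and negative powers alike, since $(1+g)^{-1}=\sum_{i\geqslant 0}(-g)^i$), and a single application of Lemma~\ref{lem:zerobound}(i) finishes. In effect, your factorisation $f=a_0(1+u)(1+w)$ and the separate treatment of positive/negative $k$ via $\tilde u$ reprove, in disguise, the special case of~\cite[Prop.\,3.8]{GOMS} needed here. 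What your approach buys is self-containment: you never leave the paper. What the paper's approach buys is brevity---three lines instead of three paragraphs---at the cost of an external citation. Both arguments ultimately rest on the same mechanism (Lemma~\ref{lem:zerobound}(i) applied to a sequence of powers tending to zero); yours makes that mechanism explicit rather than delegating it. As a minor simplification, note that since $\nu(f)=0$ is already constant, the initial reduction via the product decomposition of $A$ is not actually needed: the structural form $f=a_0(1+u+v)$ with $a_0\in A^*$ and $v$ nilpotent holds globally.
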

\begin{proof}
We have a decomposition $f=c\cdot (1+g)$, where $c\in A^*$ and $g=\sum\limits_{l\in\z^n}a_lt^l$ is an element such that $a_0=0$ and the iterated Laurent series $\sum\limits_{l<0}a_lt^l$ is nilpotent. Clearly, for any integer $k\in\z$, we have that $\supp(f^k)$ is contained in the union $\bigcup\limits_{i\in\N}\supp(g^i)$. By~\cite[Prop.\,3.8]{GOMS}, the countable set $\{g^i\}$, $i\in\N$, tends to zero. Thus by Lemma~\ref{lem:zerobound}$(i)$, the set~$\{g^i\}$, $i\in\N$, is bounded, which proves the lemma.
\end{proof}

\begin{defin}
Denote by
$$
\Hb_{m,n}(A)\subset \big(\LL^m(A)^*\big)^{\times n}
$$
the set of all collections $(\varphi_1,\ldots,\varphi_n)$ of invertible iterated Laurent series in $t_1,\ldots,t_m$ such that the matrix ${\big(\nu(\varphi_1),\ldots,\nu(\varphi_n)\big)}\in\M_{m\times n}\big(\uz(A)\big)$ belongs to~$\M^+_{m\times n}\big(\uz(A)\big)$.
\end{defin}

\begin{prop}\label{prop:converg}
A collection ${\varphi=(\varphi_1,\ldots,\varphi_n)\in \big(\LL^m(A)^*\big)^{\times n}}$
belongs to $\Hb_{m, n}(A)$ if and only if for any $\lambda\in\Lambda_n$, the countable set $\{\varphi^l\}$, $l\in\z^n_{\lambda}$, tends to zero in~$\LL^m(A)$.
\end{prop}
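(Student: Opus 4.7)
The plan is to reduce to the case where the valuations $\nu(\varphi_i)$ are constant, then analyze the leading monomial of $\varphi^l$ explicitly.

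First, both sides of the stated equivalence are local on $\Spec A$. The entries of $\Upsilon(\varphi)$ are locally constant functions, so a finite decomposition $A \simeq \prod_j A_j$ makes them into constant matrices $M^{(j)} \in \M_{m\times n}(\z)$. Under the topological decomposition $\LL^m(A)\simeq\prod_j\LL^m(A_j)$ (the neighborhoods $U_\lambda$ split accordingly), $\varphi\in\Hb_{m,n}(A)$ iff each $\varphi^{(j)}\in\Hb_{m,n}(A_j)$, and a set tends to zero in $\LL^m(A)$ iff each projection does. So assume $M=(\nu(\varphi_1),\ldots,\nu(\varphi_n))$ is a constant matrix in $\M_{m\times n}(\z)$, and factor each $\varphi_i=c_i t^{M_i}\tilde\varphi_i$ with $c_i\in A^*$ the leading coefficient and $\tilde\varphi_i\in\LL^m(A)^*$ satisfying $\nu(\tilde\varphi_i)=0$ with constant term $1$. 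Then
$$
\varphi^l \;=\; \bigl(\textstyle\prod_i c_i^{l_i}\bigr)\cdot t^{Ml}\cdot\prod_i\tilde\varphi_i^{l_i}\qquad(l\in\z^n).
$$
By Lemma~\ref{lemma:coundpower} each $\{\tilde\varphi_i^{\,k}\}_{k\in\z}$ is bounded, and a finite product of bounded families is bounded, so there is a single $\rho\in\Lambda_m$ with $\supp\bigl(\prod_i\tilde\varphi_i^{l_i}\bigr)\subset\z^m_\rho$ for every $l\in\z^n$. Consequently $\supp(\varphi^l)\subset Ml+\z^m_\rho$ for all $l$. A short computation (using that modulo nilpotents $\tilde\varphi_i$ has no strictly-negative support) shows the coefficient of $\varphi^l$ at $Ml$ is an element of $(1+\Nil A)\cdot\prod_i c_i^{l_i}\subset A^*$, so in particular $Ml\in\supp(\varphi^l)$.

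For the forward implication, assume $M\in\M^+_{m\times n}(\z)$, fix $\lambda\in\Lambda_n$, and pick $\mu\in\Lambda_m$. By Lemma~\ref{lem:upper}(iii) there is $\mu'\in\Lambda_m$ with $M\cdot\z^n_\lambda\subset\z^m_{\mu'}$. Any $l\in\z^n_\lambda$ for which $\supp(\varphi^l)\cap(-\z^m_\mu)\ne\varnothing$ forces $Ml\in-\z^m_\mu-\z^m_\rho\subset-\z^m_{\mu''}$ for a fixed $\mu''\in\Lambda_m$ (sums of the form $\z^m_a+\z^m_b$ embed into some $\z^m_c$). Thus $Ml$ lies in the finite set $\z^m_{\mu'}\cap(-\z^m_{\mu''})$ (finite by Lemma~\ref{lemma:intersect}), and injectivity of $M$ (Lemma~\ref{lem:upper}(iii)) confines $l$ itself to a finite set. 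This gives exactly the tending-to-zero condition.

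For the converse, suppose $M\notin\M^+_{m\times n}(\z)$. Negating condition $(i)$ of Lemma~\ref{lem:upper}, there exists $l\in\z^n$ with $l>0$ lexicographically and $Ml\leq 0$. Invoking the characterization used in the proof of $(iii)\Rightarrow(i)$ of Lemma~\ref{lem:upper}, that lex-positive elements are precisely those whose natural-number multiples lie inside a single $\z^{\bullet}_{\kappa}$, we obtain $\lambda\in\Lambda_n$ with $\{kl:k\in\N\}\subset\z^n_\lambda$ and $\mu\in\Lambda_m$ with $\{kMl:k\in\N\}\subset-\z^m_\mu$ (applied to $-Ml\geq 0$ in the case $Ml<0$, and trivially for any $\mu$ containing $0$ in the case $Ml=0$). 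Since $kMl\in\supp(\varphi^{kl})$, every $k\geq 1$ contributes a non-empty intersection $\supp(\varphi^{kl})\cap(-\z^m_\mu)$, contradicting the hypothesis that $\{\varphi^p\}_{p\in\z^n_\lambda}$ tends to zero.

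The main obstacle is the converse direction: one must package the whole sequence $\{kMl\}_{k\geq 1}$ inside a single $-\z^m_\mu$, so that a single test-neighborhood witnesses the failure of convergence. This is supplied precisely by the uniform characterization of lex-positive elements noted above and the invertibility of the coefficient of $\varphi^l$ at $Ml$, which together guarantee $kMl$ actually appears in $\supp(\varphi^{kl})$ rather than being an artifact of the ambient bound $Ml+\z^m_\rho$.
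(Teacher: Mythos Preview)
Your proof is correct and follows essentially the same strategy as the paper: reduce to constant valuations, factor out the monomial $t^{Ml}$, use Lemma~\ref{lemma:coundpower} to bound the remaining factor, and exploit that $Ml\in\supp(\varphi^l)$. The only organizational differences are that in the forward direction you bypass Lemma~\ref{lem:zerobound}(ii) by working directly with the support inclusion $\supp(\varphi^l)\subset Ml+\z^m_\rho$, and in the converse direction you argue the contrapositive via condition~$(i)$ of Lemma~\ref{lem:upper} (producing a single ray $\{kl\}$ that witnesses non-convergence), whereas the paper verifies condition~$(iii)$ directly by first invoking Lemma~\ref{lem:zerobound}(i) to get boundedness and then ruling out a kernel; both routes hinge on the same observation that $\nu(\varphi^l)=Ml$ lies in the support.
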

\begin{proof}
Taking a decomposition of $A$ into a finite product of rings, we may assume that all $\nu(\varphi_i)\in\uz^n(A)$, $1\leqslant i\leqslant n$, are constant, that is, are elements of~$\z^n$.

Suppose that $\varphi\in\Hb_{m,n}(A)$. Put $M:={\big(\nu(\varphi_1),\ldots,\nu(\varphi_n)\big)}\in\M^+_{m\times n}(\z)$. For each $i$, $1\leqslant i\leqslant n$, let $f_i\in\LL^m(A)^*$ be the element that satisfies
$$
\varphi_i=t^{\nu(\varphi_i)}\cdot f_i\,.
$$
In particular, we have $\nu(f_i)=0$. Then for any $l\in\z^n$, there is an equality $\varphi^l=t^{M\cdot l}\cdot f^l$, where $f = (f_1, \ldots, f_n)$.

Since the matrix $M$ satisfies condition~$(iii)$ of Lemma~\ref{lem:upper}, we have an embedding $M\colon \z^n_{\lambda}\hookrightarrow\z^m_{\mu}$ for some $\mu\in\Lambda_m$. For any $\rho\in\Lambda_m$, the intersection $\z^m_{\mu}\cap (-\z^m_{\rho})$ is finite. This implies that the countable set $\{t^{M\cdot l}\}$, $l\in \z^n_{\lambda}$, tends to zero.

It follows from Lemma~\ref{lemma:coundpower} that the countable set $\{f^l\}$, where $l\in \z^n$, is bounded. Thus by Lemma~\ref{lem:zerobound}$(ii)$, the countable set $\{t^{M\cdot l}\cdot f^l\}$, $l\in\z^n_{\lambda}$, tends to zero.

Now suppose that the countable set $\{\varphi^l\}$, $l\in\z^n_{\lambda}$, tends to zero. Then by Lemma~\ref{lem:zerobound}$(i)$, this set is bounded. On the other hand, we have that $\nu(\varphi^l)\in\supp(\varphi^l)$ and ${\nu(\varphi^l)=M\cdot l\in\z^m}$. Thus there is $\mu\in\Lambda_m$ such that $M\cdot\z^n_{\lambda}\subset \z^m_{\mu}$.

Suppose that the map $M\colon\z^n\to\z^m$ has a non-zero kernel. Then there is $\lambda\in\Lambda_n$ such that for infinitely many $l\in \z^n_{\lambda}$, we have $M\cdot l=0$. Consequently the countable set~$\{\varphi^l\}$, $l\in\z^n_{\lambda}$, contains infinitely many elements $g$ with $\nu(g)=0$. This contradicts the condition that $\{\varphi^l\}$, $l\in\z^n_{\lambda}$, tends to zero. Thus the map $M\colon\z^n\to\z^m$ is injective and~$M$ satisfies condition~$(iii)$ of Lemma~\ref{lem:upper}.
\end{proof}

Now we are ready to prove the main result of this section.

\begin{theor}\label{prop:contchange}
We have a bijection (see Definition~\ref{def:hom})
$$
\Hom^{\rm c,alg}_{A}\big(\LL^n(A),\LL^m(A)\big)\stackrel{\sim}\longrightarrow \Hb_{m,n}(A)\,,\qquad \phi\longmapsto\big(\phi(t_1),\ldots\phi(t_n)\big)\,.
$$
\end{theor}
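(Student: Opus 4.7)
The plan is to handle the three pieces---injectivity, well-definedness into $\Hb_{m,n}(A)$, and surjectivity---by leveraging Proposition~\ref{prop:converg}, which already converts the combinatorial condition ``$\Upsilon(\phi) \in \M^+_{m\times n}(\uz(A))$'' into the topological condition that $\{\varphi^l\}_{l \in \z^n_\lambda}$ tends to zero for every $\lambda \in \Lambda_n$. Injectivity is immediate: for any $\phi \in \Hom_A^{\rm c,alg}(\LL^n(A),\LL^m(A))$ one has $\phi(\sum_l a_l t^l) = \sum_l a_l \phi(t_1)^{l_1}\cdots\phi(t_n)^{l_n}$ on Laurent polynomials, and these are dense by Lemma~\ref{cor:conv}(ii), so continuity determines $\phi$ from the tuple $(\phi(t_1),\ldots,\phi(t_n))$.

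For well-definedness of the forward map, each $\phi(t_i)$ is invertible since $t_i$ is. For any $\lambda \in \Lambda_n$, the family $\{t^l\}_{l \in \z^n_\lambda}$ tends to zero in $\LL^n(A)$, as $\z^n_\lambda \cap (-\z^n_\mu)$ is finite for every $\mu$ (by finite fibers of $\z^n_\lambda \times \z^n_\mu \to \z^n$). Continuity and multiplicativity of $\phi$ then force $\{\varphi^l\}_{l \in \z^n_\lambda}$ to tend to zero in $\LL^m(A)$, where $\varphi_i := \phi(t_i)$, and Proposition~\ref{prop:converg} yields $(\varphi_1,\ldots,\varphi_n) \in \Hb_{m,n}(A)$.

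For surjectivity, given $(\varphi_1,\ldots,\varphi_n) \in \Hb_{m,n}(A)$, Proposition~\ref{prop:converg} provides the convergence needed to define $\phi(\sum_l a_l t^l) := \sum_l a_l \varphi^l$ in the complete group $\LL^m(A)$, since $\supp(a_l \varphi^l) \subset \supp(\varphi^l)$. Additivity and $A$-linearity are clear. For continuity, fix $\mu \in \Lambda_m$ and put $S := \{l \in \z^n : \supp(\varphi^l) \cap (-\z^m_\mu) \neq \varnothing\}$. For each $\rho \in \Lambda_n$, the intersection $S \cap \z^n_\rho$ is finite by convergence to zero, so Lemma~\ref{lemma:intersect} applied to $-S$ produces $\lambda \in \Lambda_n$ with $S \subset -\z^n_\lambda$. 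Then $f \in U_\lambda$ forces $\supp(\phi(f)) \subset \bigcup_{l \in \supp(f)} \supp(\varphi^l)$ to miss $-\z^m_\mu$, hence $\phi(f) \in U_\mu$. Multiplicativity follows from Lemma~\ref{lem:mult}, since $\phi(t^l) = \varphi^l = \phi(t_1)^{l_1}\cdots\phi(t_n)^{l_n}$ by construction.

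I do not anticipate a serious obstacle: the heavy lifting is encoded in Proposition~\ref{prop:converg}, and the remainder is a support-tracking argument built on Lemmas~\ref{cor:conv}, \ref{lemma:intersect}, and \ref{lem:mult}. The only mildly delicate step is the continuity check in the surjective direction, which is resolved by reinterpreting ``$\phi^{-1}(U_\mu)$ is open'' dually to the convergence of $\{\varphi^l\}$ via Lemma~\ref{lemma:intersect}.
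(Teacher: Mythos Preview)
Your proof is correct and follows essentially the same outline as the paper's: density of Laurent polynomials for injectivity, Proposition~\ref{prop:converg} for well-definedness, and the convergent-sum construction plus Lemma~\ref{lem:mult} for surjectivity.

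The one genuine difference lies in how continuity of the constructed $\phi$ is established in the surjective direction. The paper does not argue directly: it observes that the same formula $f\mapsto\sum_l b_l\varphi^l$ makes sense over every $A$-algebra $B$, assembles these into a $(\ga)_A$-linear morphism of group functors $\alpha\colon (L^n\ga)_A\to(L^m\ga)_A$, and then invokes Proposition~\ref{prop:funccont} (whose proof rests on Lemma~\ref{lemma:basechar}) to conclude that $\alpha(A)=\phi$ is continuous. You instead give a direct topological argument: fixing $\mu\in\Lambda_m$, you show the set $S=\{l:\supp(\varphi^l)\cap(-\z^m_\mu)\ne\varnothing\}$ meets each $\z^n_\rho$ finitely, apply Lemma~\ref{lemma:intersect} to $-S$, and read off an explicit $\lambda$ with $\phi(U_\lambda)\subset U_\mu$. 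Your route is more elementary and self-contained, bypassing the ind-scheme formalism entirely; the paper's route has the advantage of simultaneously producing the functorial extension of $\phi$ to all $A$-algebras, which is what Corollary~\ref{cor:funccontrg} and the later representability results require anyway.
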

\begin{proof}
Consider an element $\phi\in\Hom^{\rm c,alg}_{A}\big(\LL^n(A),\LL^m(A)\big)$. For any $\lambda\in\Lambda_n$, the countable set $\{t^l\}$, $l\in\z^n_{\lambda}$, tends to zero in $\LL^n(A)$. Therefore the countable set $\{\phi(t^l)\}$, $l\in\z^n_{\lambda}$, tends to zero in $\LL^m(A)$. For any $l\in\z^n$, we have an equality $\phi(t^l)=\varphi^l$, where $\varphi_i:=\phi(t_i)$, $1\leqslant i\leqslant n$, and $\varphi=(\varphi_1,\ldots,\varphi_n)$. Thus by Proposition~\ref{prop:converg}, the map in the theorem is well-defined. The injectivity of this map follows directly from Lemma~\ref{cor:conv}$(ii)$.

Let us prove the surjectivity. Take a collection $\varphi=(\varphi_1,\ldots,\varphi_n)\in \Hb_{m,n}(A)$. By Proposition~\ref{prop:converg}, for any $\lambda\in\Lambda_n$, the countable set~$\{\varphi^l\}$, $l\in\z^n_{\lambda}$, tends to zero. Hence the series $\sum\limits_{l\in\z^n_{\lambda}}a_l\varphi^l$ converges in~$\LL^m(A)$ and we have a well-defined map
$$
\phi\,:\,\LL^n(A)\longrightarrow \LL^m(A)\,,\qquad \mbox{$f=\sum\limits_{l\in\z^n_{\lambda}}a_lt^l\longmapsto f(\varphi_1,\ldots,\varphi_n)=\sum\limits_{l\in\z^n_{\lambda}}a_l\varphi^l$}\,.
$$
Clearly, the map $\phi$ is $A$-linear, one has $\phi(t_i)=\varphi_i$, $1\leqslant i\leqslant n$, and one easily shows that $\phi$ is a homomorphism of additive groups. Similarly, for any $A$-algebra $B$, the collection $(\varphi_1,\ldots,\varphi_n)$ defines a $B$-linear homomorphism of additive groups ${\LL^n(B)\to\LL^m(B)}$. This gives a $(\ga)_A$-linear morphism of group functors~$\alpha\colon (L^n\ga)_A\to(L^m\ga)_A$ such that $\alpha(A)=\phi$. Therefore by Proposition~\ref{prop:funccont}, the map~$\phi$ is continuous. Finally, Lemma~\ref{lem:mult} implies that $\phi$ is a homomorphism of rings. Thus $\phi$ is a well-defined element in $\Hom^{\rm c,alg}_A\big(\LL^n(A),\LL^m(A)\big)$.
\end{proof}

\begin{corol} \label{cor:inequality}
If $n>m$, then there are no continuous homomorphisms of $A$-algebras from $\LL^n(A)$ to~$\LL^m(A)$.
\end{corol}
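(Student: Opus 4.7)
The plan is to read this off directly from Theorem~\ref{prop:contchange} combined with the combinatorial constraint already built into the definition of $\M^+_{m\times n}(\z)$. By that theorem, giving a continuous $A$-algebra homomorphism $\phi\colon\LL^n(A)\to\LL^m(A)$ is the same as giving a collection $(\varphi_1,\ldots,\varphi_n)\in\Hb_{m,n}(A)$; so it suffices to show that $\Hb_{m,n}(A)$ is empty when $n>m$. (The degenerate case $A=0$ is vacuous since then $\LL^n(A)=\LL^m(A)=0$, so one should implicitly assume $A\neq 0$, which makes $\Spec(A)$ non-empty.)

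Assuming an element of $\Hb_{m,n}(A)$ exists, the associated valuation matrix
$$
M=\bigl(\nu(\varphi_1),\ldots,\nu(\varphi_n)\bigr)\in\M^+_{m\times n}\bigl(\uz(A)\bigr)
$$
restricts, at any point of $\Spec(A)$, to a matrix in $\M^+_{m\times n}(\z)$. But by the column echelon description in Lemma~\ref{lem:upper}(ii), the pivot rows of such a matrix form a strictly increasing sequence $1\leqslant p_1<\ldots <p_n\leqslant m$, which forces $n\leqslant m$. Hence $\M^+_{m\times n}(\z)=\varnothing$ whenever $n>m$, and a fortiori so is $\M^+_{m\times n}\bigl(\uz(A)\bigr)$ (over any non-zero $A$), giving the desired contradiction.

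There is essentially no obstacle: the translation into matrices (Theorem~\ref{prop:contchange}) and the pivot-row constraint (Lemma~\ref{lem:upper}(ii)) have already done the work. This is precisely the content of the remark immediately following Definition~\ref{def:plus}, and the corollary is its direct consequence.
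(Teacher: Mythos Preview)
Your proof is correct and follows exactly the paper's approach: the corollary is stated without proof as an immediate consequence of Theorem~\ref{prop:contchange} and the observation (made right after Definition~\ref{def:plus}) that $\M^+_{m\times n}(\z)=\varnothing$ when $n>m$, which is precisely the pivot-row argument from Lemma~\ref{lem:upper}(ii) that you spell out.
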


\begin{corol}\label{cor:section}
The homomorphism of monoids (see Corollary~\ref{cor:hommon})
$$
\Upsilon\,:\,\Hom^{\rm c,alg}_A\big(\LL^n(A),\LL^n(A)\big)\longrightarrow \M_{n\times n}^+\big(\uz(A)\big)
$$
has a natural monoid section that sends a matrix ${M\in\M_{n\times n}^+\big(\uz(A)\big)}$ to the endomorphism~$\phi$ such that $\phi(t_i)=t^{l_i}$, $1\leqslant i\leqslant n$, where $M=(l_1,\ldots,l_n)$ and $l_i\in\z^n$.
\end{corol}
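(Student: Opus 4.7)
The plan is to verify three things: that $\phi$ is a well-defined element of $\Hom^{\rm c,alg}_A\big(\LL^n(A),\LL^n(A)\big)$, that $\Upsilon(\phi)=M$, and that the assignment $M\mapsto\phi$ respects composition and the identity. All three follow essentially from Theorem~\ref{prop:contchange} and the explicit form of matrices in $\M^+_{n\times n}\big(\uz(A)\big)$.

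First I would reduce to the case when $M\in\M^+_{n\times n}(\z)$ is constant. Using the fact that an element of $\uz(A)$ is locally constant, there is a decomposition $A\simeq\prod_{j=1}^N A_j$ such that each column $l_i$ of $M$ is a constant element of $\z^n$ on each factor; correspondingly, $\LL^n(A)\simeq\prod_{j=1}^N\LL^n(A_j)$, and it suffices to construct $\phi$ on each factor. So assume $l_i\in\z^n$ and consider $\varphi_i:=t^{l_i}\in \LL^n(A)^*$. Since $\nu(t^{l_i})=l_i$ (the leading coefficient~$1$ is invertible), the matrix $\big(\nu(\varphi_1),\ldots,\nu(\varphi_n)\big)$ equals~$M$, which lies in $\M^+_{n\times n}(\z)$ by hypothesis. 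By Definition of $\Hb_{n,n}(A)$, the tuple $(\varphi_1,\ldots,\varphi_n)$ belongs to $\Hb_{n,n}(A)$, so Theorem~\ref{prop:contchange} produces a unique $\phi\in\Hom^{\rm c,alg}_A\big(\LL^n(A),\LL^n(A)\big)$ with $\phi(t_i)=t^{l_i}$. This also gives $\Upsilon(\phi)=\big(\nu(\phi(t_1)),\ldots,\nu(\phi(t_n))\big)=M$, so $\Upsilon\circ s=\Id$.

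Next I would check that $s$ is a homomorphism of monoids. For the identity matrix $\Id_n=(e_1,\ldots,e_n)$, we have $\phi(t_i)=t^{e_i}=t_i$, and by the injectivity part of Theorem~\ref{prop:contchange} this forces $s(\Id_n)=\Id_{\LL^n(A)}$. For multiplicativity, let $M=(l_1,\ldots,l_n)$ and $M'=(l'_1,\ldots,l'_n)$, and write $l'_i=(l'_{i,1},\ldots,l'_{i,n})^{\top}$. Then $MM'$ has $i$-th column $Ml'_i=\sum_{k=1}^n l'_{i,k}l_k$. On the other hand, by the multiplicativity of $\phi_M$ (guaranteed by Lemma~\ref{lem:mult} together with Theorem~\ref{prop:contchange}),
\[
(\phi_M\circ\phi_{M'})(t_i)=\phi_M(t^{l'_i})=\phi_M(t_1)^{l'_{i,1}}\cdots\phi_M(t_n)^{l'_{i,n}}=(t^{l_1})^{l'_{i,1}}\cdots(t^{l_n})^{l'_{i,n}}=t^{Ml'_i}.
\]
Hence $\phi_M\circ\phi_{M'}$ and $\phi_{MM'}$ agree on the parameters $t_1,\ldots,t_n$, and again by the injectivity in Theorem~\ref{prop:contchange} they coincide, i.e.\ $s(M)\circ s(M')=s(MM')$.

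No step is a serious obstacle: the only mildly subtle point is the reduction to constant $M$ via the decomposition of $\Spec(A)$, which is needed to make literal sense of the monomial $t^{l_i}$ when $l_i$ is only locally constant. Everything else is a direct application of Theorem~\ref{prop:contchange} and Lemma~\ref{lem:mult}.
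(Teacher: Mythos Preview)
Your proof is correct. The paper states this corollary without proof, treating it as an immediate consequence of Theorem~\ref{prop:contchange}; your argument simply fills in the natural details (well-definedness of the section via the bijection with $\Hb_{n,n}(A)$, and the monoid property via the injectivity part of that bijection), which is exactly the intended approach.
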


\medskip

Theorem~\ref{prop:contchange} also implies that one can define an associative composition $\circ$ of collections from~$\Hb_{p,m}$ and $\Hb_{m,n}$ as follows.

\begin{corol}\label{cor:monoidphi}
For all collections $(\varphi_1,\ldots,\varphi_n)\in\Hb_{m,n}(A)$ and $(\vartheta_1,\ldots,\vartheta_m)\in\Hb_{p,m}(A)$, the composition
$$
(\vartheta_1,\ldots,\vartheta_m)\circ(\varphi_1,\ldots,\varphi_n):=\big(\varphi_1(\vartheta_1,\ldots,\vartheta_m),\ldots,\varphi_n(\vartheta_1,\ldots,\vartheta_m)\big)\in \big(\LL^p(A)\big)^{\times n}
$$
belongs to $\Hb_{p,n}(A)\subset \big(\LL^p(A)^*\big)^{\times n}\subset \big(\LL^p(A)\big)^{\times n}$. Moreover, there is an equality of matrices in $\M_{p\times n}\big(\uz(A)\big)$
$$
\big(\nu(\varphi_1(\vartheta_1,\ldots,\vartheta_m)),\ldots,\nu(\varphi_n(\vartheta_1,\ldots,\vartheta_m))\big)=
\big(\nu(\vartheta_1),\ldots,\nu(\vartheta_m)\big)\cdot \big(\nu(\varphi_1),\ldots,\nu(\varphi_n)\big)\,.
$$
\end{corol}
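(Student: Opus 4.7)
The plan is to reduce everything to Theorem~\ref{prop:contchange} and Proposition~\ref{prop:valuation}. By Theorem~\ref{prop:contchange}, the collection $(\varphi_1,\ldots,\varphi_n)\in\Hb_{m,n}(A)$ corresponds to a unique continuous $A$-algebra homomorphism $\phi\colon \LL^n(A)\to\LL^m(A)$ with $\phi(t_i)=\varphi_i$, and similarly $(\vartheta_1,\ldots,\vartheta_m)\in\Hb_{p,m}(A)$ corresponds to $\psi\colon \LL^m(A)\to\LL^p(A)$ with $\psi(t_j)=\vartheta_j$. The composition $\psi\circ\phi\colon \LL^n(A)\to\LL^p(A)$ is again a continuous homomorphism of $A$-algebras, so it determines an element of $\Hb_{p,n}(A)$ via the bijection of Theorem~\ref{prop:contchange}.

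The next step is to identify this element explicitly as the composition defined in the statement. Writing $\varphi_i=\sum_{l\in\z^m}a_{i,l}t^l$, Lemma~\ref{cor:conv}$(i)$ gives convergence of this series to $\varphi_i$ in $\LL^m(A)$. Since $\psi$ is continuous and a ring homomorphism, applying $\psi$ term by term yields
$$
\psi(\varphi_i) \;=\; \sum_{l\in\z^m} a_{i,l}\,\psi(t)^l \;=\; \sum_{l\in\z^m} a_{i,l}\,\vartheta^l \;=\; \varphi_i(\vartheta_1,\ldots,\vartheta_m),
$$
where the convergence of the middle series is precisely the content of Proposition~\ref{prop:converg} applied to the collection $(\vartheta_1,\ldots,\vartheta_m)\in\Hb_{p,m}(A)$. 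Hence $(\psi\circ\phi)(t_i)=\varphi_i(\vartheta_1,\ldots,\vartheta_m)$, and by the well-definedness half of Theorem~\ref{prop:contchange}, the tuple $\big(\varphi_1(\vartheta_1,\ldots,\vartheta_m),\ldots,\varphi_n(\vartheta_1,\ldots,\vartheta_m)\big)$ lies in $\Hb_{p,n}(A)$.

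Finally, the matrix identity follows from Proposition~\ref{prop:valuation} applied to the homomorphism $\psi$ and the invertible element $\varphi_i\in\LL^m(A)^*$: one has
$$
\nu\bigl(\psi(\varphi_i)\bigr) \;=\; \Upsilon(\psi)\cdot \nu(\varphi_i).
$$
By Definition~\ref{def:ups}, $\Upsilon(\psi)=\big(\nu(\vartheta_1),\ldots,\nu(\vartheta_m)\big)$, so assembling the equalities for $i=1,\ldots,n$ as columns produces exactly the asserted matrix equation.

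There is no genuine obstacle here; the only non-formal ingredient is the continuous-substitution step $\psi(\varphi_i)=\varphi_i(\vartheta_1,\ldots,\vartheta_m)$, and this is immediate once one invokes continuity of $\psi$ together with the convergence statements in Lemma~\ref{cor:conv}$(i)$ and Proposition~\ref{prop:converg}.
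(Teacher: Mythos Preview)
Your proof is correct and follows essentially the same approach as the paper: both arguments pass to the continuous $A$-algebra homomorphisms $\phi$ and $\psi$ (the paper writes $\theta$) via Theorem~\ref{prop:contchange}, identify $(\psi\circ\phi)(t_i)$ with $\varphi_i(\vartheta_1,\ldots,\vartheta_m)$, and then invoke Theorem~\ref{prop:contchange} once more together with Proposition~\ref{prop:valuation}. The only difference is that you spell out the substitution identity $\psi(\varphi_i)=\varphi_i(\vartheta_1,\ldots,\vartheta_m)$ via Lemma~\ref{cor:conv}$(i)$ and Proposition~\ref{prop:converg}, whereas the paper treats it as immediate from the explicit description of $\psi$ in the proof of Theorem~\ref{prop:contchange}.
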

\begin{proof}
By Theorem~\ref{prop:contchange}, the collections $(\varphi_1,\ldots,\varphi_n)$ and $(\vartheta_1,\ldots,\vartheta_m)$ correspond to homomorphisms~$\phi\in\Hom_A^{\rm c,rg}\big(\LL^n(A),\LL^m(A)\big)$ and~$\theta\in\Hom_A^{\rm c,rg}\big(\LL^m(A),\LL^p(A)\big)$, respectively. It follows that for any $i$ such that $1\leqslant i\leqslant n$, there are equalities
$$
(\theta\circ\phi)(t_i)=\theta(\varphi_i)=\varphi_i(\vartheta_1,\ldots,\vartheta_m)\,.
$$
Hence the composition $\theta\circ\phi\in\Hom_A^{\rm c,rg}(\LL^n(A),\LL^p(A))$ corresponds to the collection $(\vartheta_1,\ldots,\vartheta_m)\circ(\varphi_1,\ldots,\varphi_n)$. Therefore again by Theorem~\ref{prop:contchange}, we see that the collection ${(\vartheta_1,\ldots,\vartheta_m)\circ(\varphi_1,\ldots,\varphi_n)}$ belongs to $\Hb_{p,n}(A)$. The equality between matrices follows from Proposition~\ref{prop:valuation}.
\end{proof}

\section{Representability of functors and invariance of the residue}

In this section, we show representability for functors defined by the sets of continuous homomorphisms, see Proposition~\ref{prop:repr} and Corollary~\ref{cor:repr}. Then we investigate how the residue map is changed under continuous homomorphisms, see Proposition~\ref{prop:invres} and Corollary~\ref{cor:res}.

\medskip

Define the functor on the category of rings (see Definition~\ref{def:hom})
$$
\Hc om^{\rm c,alg}(\LL^n,\LL^m)\,:\,A\longmapsto \Hom^{\rm c,alg}_A\big(\LL^n(A),\LL^m(A)\big)\,.
$$
This is indeed a functor by Corollary~\ref{cor:funccontrg} (cf. Remark~\ref{rmk:funvtadj} below). The functor (see Definition~\ref{def:plus})
$$
\M^+_{m\times n}(\uz)\,:\,A\longmapsto \M^+_{m\times n}\big(\uz(A)\big)
$$
is clearly represented by an ind-scheme, which is a formal direct limit of finite disjoint unions of $\Spec(\z)$.
Theorem~\ref{prop:contchange} implies directly the following representability result.

\begin{prop}\label{prop:repr}
The functor $\Hc om^{\rm c,alg}(\LL^n,\LL^m)$ is represented by the ind-affine scheme over $\z$
$$
\M_{m\times n}^+\big(\uz\big)\times\big((L^m\gm)^0\big)^{\times n}\,,
$$
where for any ring $A$, the corresponding bijection sends $\phi\in\Hom^{\rm c,alg}_A\big(\LL^n(A),\LL^m(A)\big)$ to
$$
\big(\Upsilon(\phi),(f_1,\ldots,f_n)\big)\in \M_{m\times n}^+\big(\uz(A)\big)\times\big((L^m\gm)^0\big)^{\times n}(A)\,,
$$
where $f_i:=\phi(t_i)\cdot t^{-\nu(\phi(t_i))}$, $1\leqslant i\leqslant n$ (see Definition~\ref{def:ups} for $\Upsilon(\phi)$).
\end{prop}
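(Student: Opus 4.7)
The plan is to combine Theorem~\ref{prop:contchange} with the canonical factorization of each invertible iterated Laurent series as a ``monomial'' times a unit of valuation zero. By Theorem~\ref{prop:contchange}, the functor $\Hc om^{\rm c,alg}(\LL^n,\LL^m)$ is naturally isomorphic to the functor $A\mapsto\Hb_{m,n}(A)$ via $\phi\mapsto\bigl(\phi(t_1),\ldots,\phi(t_n)\bigr)$, so it suffices to identify $\Hb_{m,n}(A)$ with the set $\M^+_{m\times n}\bigl(\uz(A)\bigr)\times\bigl((L^m\gm)^0\bigr)^{\times n}(A)$ functorially in $A$.

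To do so, I first construct a functorial bijection $\LL^m(A)^*\stackrel{\sim}\to\uz^m(A)\times(L^m\gm)^0(A)$ sending $\varphi$ to $\bigl(\nu(\varphi),\,\varphi\cdot t^{-\nu(\varphi)}\bigr)$. The only point requiring care is the definition of the symbol $t^{\nu(\varphi)}$: since $\nu(\varphi)\in\uz^m(A)$ is a locally constant function on the quasi-compact space $\Spec(A)$, it takes only finitely many values, so there is a finite decomposition $A\simeq\prod_j A_j$ on the factors of which $\nu(\varphi)$ is a constant $l_j\in\z^m$, and one sets $t^{\nu(\varphi)}:=\prod_j t^{l_j}$ via the corresponding decomposition $\LL^m(A)\simeq\prod_j\LL^m(A_j)$. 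Since $\nu$ is a group homomorphism (as recalled in Section~\ref{prelim}) and $\nu(t^{\nu(\varphi)})=\nu(\varphi)$, the quotient $\varphi\cdot t^{-\nu(\varphi)}$ lies in $(L^m\gm)^0(A)$. The inverse map $(l,f)\mapsto t^l\cdot f$ is functorial in $A$ as well, so we indeed get a functorial bijection as claimed.

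Applying this bijection coordinatewise to an $n$-tuple in $(\LL^m(A)^*)^{\times n}$, and using that the defining condition of $\Hb_{m,n}(A)$ is precisely that the matrix $\bigl(\nu(\varphi_1),\ldots,\nu(\varphi_n)\bigr)$ belongs to $\M^+_{m\times n}\bigl(\uz(A)\bigr)$, yields the desired bijection. For the representability statement, the functor $\M^+_{m\times n}(\uz)$ is represented by the ind-scheme obtained as a formal direct limit of finite disjoint unions of $\Spec(\z)$ indexed by matrices in $\M^+_{m\times n}(\z)$, while $(L^m\gm)^0$ is represented by an ind-affine scheme over $\z$ by the results recalled in Section~\ref{prelim}. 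Their product then represents $\Hc om^{\rm c,alg}(\LL^n,\LL^m)$, and tracing through the identifications sends $\phi\in\Hom^{\rm c,alg}_A\bigl(\LL^n(A),\LL^m(A)\bigr)$ exactly to the pair $\bigl(\Upsilon(\phi),\,(\phi(t_i)\cdot t^{-\nu(\phi(t_i))})_i\bigr)$ of the proposition. The only genuine subtlety is the handling of $t^{\nu(\varphi)}$ when $\nu(\varphi)$ is merely locally constant rather than globally constant, which is resolved by the finite decomposition exhibited above; beyond that, the argument is a bookkeeping consequence of Theorem~\ref{prop:contchange}.
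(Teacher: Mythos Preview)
Your proof is correct and follows exactly the approach the paper intends: the paper itself offers no detailed argument, stating only that the proposition follows directly from Theorem~\ref{prop:contchange}, and your write-up simply spells out that deduction via the factorization $\varphi=t^{\nu(\varphi)}\cdot f$ with $f\in(L^m\gm)^0(A)$, including the routine handling of locally constant $\nu(\varphi)$ by a finite product decomposition.
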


Recall that for $m=n$, by Corollary~\ref{cor:hommon}, we have a morphism of monoid functors
$$
\Upsilon\,:\,\Hc om^{\rm c,alg}(\LL^n,\LL^n)\longrightarrow \M_{n\times n}^+\big(\uz\big)\,.
$$
Denote the kernel of this morphism by $\Ker(\Upsilon)_n$. Define the subfunctor
$$
\Ker(\Upsilon)^{\rm nil}_n\subset\Ker(\Upsilon)_n
$$
that sends a ring $A$ to the set of all $\phi\in\Hom^{\rm c,alg}_A\big(\LL^n(A),\LL^n(A)\big)$ such that for any~$i$ with ${1\leqslant i\leqslant n}$, we have
${\phi(t_i)=t_i+h_i}$, where $h_i\in \LL^n(A)$ is a Laurent polynomial (not merely a series) with nilpotent coefficients.

\medskip

By definition, a regular function on a functor $F$ on the category of $A$-algebras is a morphism of functors from $F$ to the affine line $(\Ab^1)_A$ over $A$. By $\OO(F)$ denote the $A$-algebra of all regular functions on $F$.

Proposition~\ref{prop:repr} implies many useful properties of regular functions on the functor~$\Hc om^{\rm c,alg}(\LL^n,\LL^m)$. Corollary~\ref{cor:repr} below contains two of them. The first one is that in order to check equalities between regular functions on the functor ${\Hc om^{\rm c,alg}(\LL^n,\LL^m)\times L^n\ga}$, it is enough to consider their evaluations at $\Q$-algebras only. The second property is that in order to check equalities between regular functions on the functor $\Ker(\Upsilon)_n\times L^n\ga$, it is enough to consider for an arbitrary ring $A$ only elements from ${\Ker(\Upsilon)_n^{\rm nil}(A)\times\LL^n(A)\subset \Ker(\Upsilon)_n(A)\times\LL^n(A)}$, which are much simpler to be treated.

These properties of regular functions are of utmost importance for what follows. We warn the reader that the proof of Corollary~\ref{cor:repr} relies heavily on the theory of thick ind-cones developed in~\cite{GOMS}. However, in this paper, the theory of thick ind-cones is used explicitly only here.

\begin{corol}\label{cor:repr}
\hspace{0cm}
\begin{itemize}
\item[(i)]
The natural ring homomorphism
$$
\OO\big(\Hc om^{\rm c,alg}(\LL^n,\LL^m)\times L^n\ga\big)\longrightarrow\OO\big(\Hc om^{\rm c,alg}(\LL^n,\LL^m)_{\Q}\times (L^n\ga)_{\Q}\big)
$$
is injective.
\item[(ii)]
The ring homomorphism which is the restriction map
$$
\OO\big(\Ker(\Upsilon)_n\times L^n\ga\big)\longrightarrow \OO\big(\Ker(\Upsilon)_n^{\rm nil}\times L^n\ga\big)
$$
is injective.
\end{itemize}
\end{corol}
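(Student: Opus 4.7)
The plan is to apply the representability result Proposition~\ref{prop:repr} to rewrite both source functors as products of an ind-flat scheme with a thick ind-cone, and then to invoke the corresponding injectivity properties from the theory of thick ind-cones developed in~\cite{GOMS}.

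For part~$(i)$, Proposition~\ref{prop:repr} gives an isomorphism of functors
$$
\Hc om^{\rm c,alg}(\LL^n,\LL^m)\times L^n\ga \;\simeq\; \M^+_{m\times n}(\uz)\times\big((L^m\gm)^0\big)^{\times n}\times L^n\ga\,.
$$
The factor $\M^+_{m\times n}(\uz)\times L^n\ga$ is ind-flat over~$\z$: $\M^+_{m\times n}(\uz)$ is a formal direct limit of finite disjoint unions of $\Spec(\z)$, and $L^n\ga$ is an ind-affine space. The remaining factor $\big((L^m\gm)^0\big)^{\times n}$ is a thick ind-cone over~$\z$, being a finite product of thick ind-cones. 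The claim is then the injectivity statement from~\cite{GOMS} that regular functions on the product of a thick ind-cone with an ind-flat scheme are determined by their evaluations on $\Q$-algebras.

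For part~$(ii)$, Proposition~\ref{prop:repr} restricts to an isomorphism
$$
\Ker(\Upsilon)_n \;\simeq\; \big((L^n\gm)^0\big)^{\times n}\,,\qquad \phi\longmapsto(f_1,\ldots,f_n),\quad f_i:=\phi(t_i)\cdot t_i^{-1}\,.
$$
Under this identification, $\Ker(\Upsilon)_n^{\rm nil}$ corresponds to the sub-functor of tuples $(f_1,\ldots,f_n)$ with each $f_i=1+t_i^{-1}h_i$ a Laurent polynomial whose coefficients, aside from the constant term~$1$, are all nilpotent. These are precisely the nilpotent points of the thick ind-cone $(L^n\gm)^0$ in the sense of~\cite{GOMS}. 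Combining with the ind-flat scheme $L^n\ga$, the injectivity follows from the analogous property of thick ind-cones in~\cite{GOMS}, namely that on a product of a thick ind-cone with an ind-flat scheme, regular functions are already determined by their restriction to the sub-functor of nilpotent points times the ind-flat scheme.

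The main obstacle is to locate and cite the correct formal statements in~\cite{GOMS} and to verify carefully that $\big((L^m\gm)^0\big)^{\times n}$ inherits the required thick ind-cone structure, and that $\Ker(\Upsilon)_n^{\rm nil}$ agrees with the sub-functor of nilpotent points of $\big((L^n\gm)^0\big)^{\times n}$ under Proposition~\ref{prop:repr}. Once these identifications are in place, both parts are formal consequences of the results from~\cite{GOMS}.
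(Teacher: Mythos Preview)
Your overall strategy is the same as the paper's, and part~$(i)$ goes through essentially as you outline. One technical correction: according to \cite[Prop.~6.8(iii)]{GOMS}, the ind-scheme $(L^m\gm)^0$ is not itself a thick ind-cone but rather a \emph{product} of a thick ind-cone with an ind-flat scheme over~$\z$ (the ind-flat factor accounts, roughly, for the invertible leading coefficient). This does not affect part~$(i)$, since \cite[Prop.~5.17]{GOMS} applies to such products, but it does matter for part~$(ii)$.

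For part~$(ii)$, your one-step argument has a gap stemming from the same misidentification. Since $(L^n\gm)^0$ is not a thick ind-cone, the notion of ``nilpotent points of the thick ind-cone $(L^n\gm)^0$'' is not available, and $\Ker(\Upsilon)_n^{\rm nil}$ does not sit inside $\big((L^n\gm)^0\big)^{\times n}$ in that way. The paper instead proceeds in two steps. First, it passes from $\big((L^n\gm)^0\big)^{\times n}$ to the absolutely dense ind-closed subscheme $\big((L^n\gm)^{\sharp}\big)^{\times n}$ (where $(L^n\gm)^{\sharp}(A)$ consists of elements $1+\sum a_l t^l$ with $\sum_{l\leqslant 0}a_l t^l$ nilpotent), using \cite[Prop.~6.13(iii), Lem.~5.25]{GOMS} for injectivity of the restriction. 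Second, it observes that $(L^n\gm)^{\sharp}$ \emph{is} a thick ind-cone \cite[Prop.~6.8(ii)]{GOMS}, that $\Ker(\Upsilon)_n^{\rm nil}$ lands inside $\big((L^n\gm)^{\sharp}\big)^{\times n}$ under Proposition~\ref{prop:repr}, and then applies \cite[Prop.~5.15]{GOMS} to the thick ind-cone $\big((L^n\gm)^{\sharp}\big)^{\times n}\times L^n\ga$. So the obstacle you anticipated is real: the identification you propose does not hold as stated, and the intermediate passage through $(L^n\gm)^{\sharp}$ is what makes the argument go through.
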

\begin{proof}
$(i)$
By Proposition~\ref{prop:repr}, we have an isomorphism
$$
\Hc om^{\rm c,alg}(\LL^n,\LL^m)\times L^n\ga\stackrel{\sim}\longrightarrow\M_{m\times n}^+\big(\uz\big)\times\big((L^m\gm)^0\big)^{\times n}\times L^n\ga\,.
$$
Clearly, the ind-affine scheme $\M_{m\times n}^+\big(\uz\big)$ is ind-flat over $\z$, that is, this scheme is a formal direct limit of flat affine schemes over $\z$. Further, the ind-affine scheme~$\big((L^n\gm)^0\big)^{\times m}$ is isomorphic to a product of a thick ind-cone and an  ind-flat scheme over~$\z$, see~\cite[Def.\,5.7, Def.\,5.10]{GOMS} and~\cite[Prop.\,6.8(iii), Lem.\,5.13]{GOMS}. The ind-affine scheme $L^n\ga$ is an ind-affine space by~\cite[Ex.\,6.7]{GOMS}, whence $L^n\ga$ is ind-flat over $\z$. Finally, by~\cite[Prop.\,5.17]{GOMS}, for any $X$ which is a product of a thick ind-cone and an ind-flat scheme over~$\z$, the homomorphism $\OO(X)\to\OO(X_{\Q})$ is injective.

$(ii)$ By Proposition~\ref{prop:repr}, we have an isomorphism
$$
\Ker(\Upsilon)_n\times L^n\ga\stackrel{\sim}\longrightarrow \big((L^n\gm)^0\big)^{\times n}\times L^n\ga\,.
$$
By~\cite[Prop.\,6.13(iii)]{GOMS}, the ind-scheme $\big((L^n\gm)^0\big)^{\times n}$ contains an absolutely dense ind-closed subscheme $\big((L^n\gm)^{\sharp}\big)^{\times n}$, see~\cite[Def.\,5.24]{GOMS}. Here, for any ring $A$, the set (actually, the group)~${(L^n\gm)^{\sharp}(A)}$ consists of all elements $1+\sum\limits_{l\in\z^n}a_lt^l\in(L^n\gm)(A)$ such that $\sum\limits_{l\leqslant 0}a_l t^l$ is a nilpotent element of $\LL^n(A)$, see~\cite[Lem.\,4.7(iii)]{GOMS}. Hence by~\cite[Lem.\,5.25]{GOMS}, the restriction map
$$
\OO\big(\big((L^n\gm)^0\big)^{\times n}\times L^n\ga\big)\longrightarrow \OO\big(\big((L^n\gm)^{\sharp}\big)^{\times n}\times L^n\ga\big)
$$
is injective.

Clearly, $\Ker(\Upsilon)^{\rm nil}_{n}$ is a subfunctor of $\big((L^n\gm)^{\sharp}\big)^{\times n}$,
 see Proposition~\ref{prop:repr}.
 Further, the ind-scheme $(L^n\gm)^{\sharp}$ is a thick ind-cone by~\cite[Prop.\,6.8(ii)]{GOMS}. Hence by~\cite[Lem.\,5.13]{GOMS} and~\cite[Ex.\,6.7]{GOMS}, the ind-scheme ${\big((L^n\gm)^{\sharp}\big)^{\times n}\times L^n\ga}$ is a thick ind-cone as well. It follows from~\cite[Prop.\,5.15]{GOMS} that the restriction map
$$
\OO\big(\big((L^n\gm)^{\sharp}\big)^{\times n}\times L^n\ga\big)\longrightarrow \OO\big(\Ker(\Upsilon)^{\rm nil}_n\times L^n\ga\big)
$$
is injective. This finishes the proof.
\end{proof}

\medskip

Now we describe how the residue map is changed under continuous homomorphisms. Untill the end of this section, we fix a continuous homomorphism of $A$-algebras ${\phi\colon\LL^n(A)\to\LL^m(A)}$. By Theorem~\ref{prop:contchange}, the matrix $\Upsilon(\phi)$ belongs to $\M^+_{m\times n}\big(\uz(A)\big)$. In particular, we have that $n\leqslant m$.

Given two locally constant functions $p,q\in\uz(A)$ on $\Spec(A)$, we say that $p<q$ or $p\leqslant q$ if this holds point-wise on $\Spec(A)$. Let
$$
1\leqslant p_1<\ldots<p_n\leqslant m \quad  \mbox{and}  \quad x_{p_i,i}>0 \quad \mbox{for}\quad 1\leqslant i\leqslant n\,,
$$
be the elements of $\uz(A)$ that correspond to the matrix $\Upsilon(\phi)\in\M^+_{m\times n}\big(\uz(A)\big)$ as in condition~$(ii)$ of Lemma~\ref{lem:upper}. Let
$$
1\leqslant q_1<\ldots<q_{m-n}\leqslant m
$$
be a collection of elements of $\uz(A)$ which is complementary to $(p_1,\ldots,p_n)$, that is, for all $1\leqslant i,j\leqslant n$, we have~$p_i\ne q_j$ point-wise on $\Spec(A)$ (in other words, for any point of $\Spec (A)$, the values of $p_i$ and $q_j$ at this point are not equal). Let $\sgn(\phi)\in\uz(A)$ be the locally constant function of sign of the locally constant permutation that sends $(1,2,\ldots,n)$ to~$(p_1,\ldots,p_m,q_{1},\ldots,q_{m-n})$. Also, put
$$
0<d(\phi):=\prod\limits_{i=1}^n x_{p_i,i}\in\uz(A)\,.
$$
For example, if $m=n$, then $\sgn(\phi)=1$ and $d(\phi)=\det\big(\Upsilon(\phi)\big)$.

The continuous homomorphism $\phi$ induces additive homomorphisms $\widetilde{\Omega}^i_{\LL^n(A)}\to \widetilde{\Omega}^i_{\LL^m(A)}$, $i\geqslant 0$, which we denote also by $\phi$ for simplicity.

\begin{prop}\label{prop:invres}
For any differential form $\omega\in\widetilde{\Omega}^n_{\LL^n(A)}$, there is an equality
\begin{equation}\label{eq:res}
\res\left(\phi(\omega)\wedge\frac{dt_{q_1}}{t_{q_1}}\wedge\ldots\wedge\frac{dt_{q_{m-n}}}{t_{q_{m-n}}}\right)=\sgn(\phi)d(\phi)\res(\omega)\,.
\end{equation}
\end{prop}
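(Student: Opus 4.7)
The plan is to reduce \eqref{eq:res} to an explicit logarithmic residue computation by combining Corollary~\ref{cor:repr}(i) with the Poincar\'e lemma in characteristic zero. Decomposing $\Spec(A)$ into connected components, I may assume that the locally constant data $(p_i)$, $(q_j)$, $\sgn(\phi)$, and $d(\phi)$ are constant, so that $\Upsilon(\phi)$ takes values in a fixed integral matrix $M\in\M^+_{m\times n}(\z)$. Writing $\omega = f\cdot dt_1\wedge\ldots\wedge dt_n$ with $f\in\LL^n(A)$, both sides of \eqref{eq:res} become regular functions on the product of $L^n\ga$ with the subfunctor of $\Hc om^{\rm c,alg}(\LL^n,\LL^m)$ cut out by $\Upsilon=M$, which is isomorphic to $((L^m\gm)^0)^{\times n}$ by Proposition~\ref{prop:repr}. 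By Corollary~\ref{cor:repr}(i) it therefore suffices to prove \eqref{eq:res} under the additional hypothesis that $A$ is a $\Q$-algebra.

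Now assume $A$ is a $\Q$-algebra. The residue isomorphism $\widetilde{\Omega}^n_{\LL^n(A)}/d\widetilde{\Omega}^{n-1}_{\LL^n(A)}\toiso A$ recalled in Section~\ref{prelim} shows that $\omega - \res(\omega)\cdot d\log t_1\wedge\ldots\wedge d\log t_n$ is exact. Since $\phi$ commutes with the de Rham differential, the forms $\tfrac{dt_{q_j}}{t_{q_j}}$ are closed, and the analogous residue isomorphism on $\LL^m(A)$ guarantees that the residue vanishes on exact $m$-forms, this substitution alters the left-hand side of \eqref{eq:res} only by the residue of an exact form. Thus it is enough to establish
$$
\res\!\left(d\log\varphi_1\wedge\ldots\wedge d\log\varphi_n\wedge\tfrac{dt_{q_1}}{t_{q_1}}\wedge\ldots\wedge\tfrac{dt_{q_{m-n}}}{t_{q_{m-n}}}\right)=\sgn(\phi)\,d(\phi),
$$
where $\varphi_i=\phi(t_i)$.

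For this last identity I would decompose $\varphi_i = t^{c_i}\cdot u_i$ with $c_i=\nu(\varphi_i)$ the $i$-th column of $M$ and $u_i\in(L^m\gm)^0(A)$, giving $d\log\varphi_i = \sum_{j=1}^m x_{j,i}\,\tfrac{dt_j}{t_j} + d\log u_i$. Writing $u_i = c\cdot(1+g)$ as in Section~\ref{prelim}, the series $\sum_{k\geqslant 1}(-1)^{k+1}g^k/k$ converges in $\LL^m(A)$ because $\{g^k\}$ tends to zero by~\cite[Prop.\,3.8]{GOMS} and $A$ is a $\Q$-algebra; hence $d\log u_i$ is exact. Expanding $\bigwedge_i d\log\varphi_i$ multilinearly, every term containing at least one factor $d\log u_i$ becomes, upon wedging with the closed forms $\tfrac{dt_{q_j}}{t_{q_j}}$, an exact $m$-form, and so contributes zero to the residue. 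The surviving contribution selects only rows $j\in\{p_1,\ldots,p_n\}$ (using $x_{j,i}=0$ for $j>p_i$ together with $\{p_i\}\cap\{q_j\}=\varnothing$), and a short calculation identifies it with $\det\big((x_{p_k,l})_{k,l=1}^n\big)\cdot\sgn(\phi)\cdot\res(d\log t_1\wedge\ldots\wedge d\log t_m)$; the determinant equals $d(\phi)$ by upper-triangularity, the reordering sign is $\sgn(\phi)$ by definition, and $\res(d\log t_1\wedge\ldots\wedge d\log t_m)=1$. The main technical point is the exactness of $d\log u_i$ over a $\Q$-algebra, which hinges on the convergence of the logarithm series in the Parshin topology.
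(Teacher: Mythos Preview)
Your proof is correct and follows essentially the same route as the paper: reduce to $\Q$-algebras via Corollary~\ref{cor:repr}(i), then use the residue isomorphism $\widetilde{\Omega}^n_{\LL^n(A)}/d\widetilde{\Omega}^{n-1}_{\LL^n(A)}\simeq A$ to reduce to the single logarithmic form $d\log t_1\wedge\ldots\wedge d\log t_n$, and finally compute the resulting residue. The only difference is in this last step: the paper invokes \cite[Prop.\,8.4]{GOMS}, which asserts directly that $\res\big(d\log f_1\wedge\ldots\wedge d\log f_m\big)=\det\big(\nu(f_1),\ldots,\nu(f_m)\big)$, whereas you reprove the needed special case by hand via the decomposition $d\log\varphi_i=\sum_j x_{j,i}\,d\log t_j+d\log u_i$ and the exactness of $d\log u_i$ over a $\Q$-algebra. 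Your argument here is sound (the wedge of $d\eta_i$ with a closed $(m-1)$-form is exact, and the column echelon shape forces the permutation sum to collapse to the identity), and has the advantage of being self-contained; the paper's version is shorter but relies on the external reference.
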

\begin{proof}
Both sides of formula~\eqref{eq:res} are functions with values in $A$ that depend on a homomorphism $\phi\in\Hc om^{\rm c,alg}(\LL^n,\LL^m)(A)$ and a differential form $\omega\in\widetilde{\Omega}^n_{\LL^n(A)}$. Clearly, the functor $A\mapsto \widetilde{\Omega}^n_{\LL^n(A)}$ is isomorphic to $L^n\ga$. Thus both sides of formula~\eqref{eq:res} are regular functions from ${\OO\big(\Hc om^{\rm c,alg}(\LL^n,\LL^m)\times L^n\ga\big)}$. Hence by Corollary~\ref{cor:repr}$(i)$, it is enough to prove the equality between the images of  these functions in the ring  ${\OO\big(\Hc om^{\rm c,alg}(\LL^n,\LL^m)_{\Q}\times (L^n\ga)_{\Q}\big)}$. In other words, it is enough to prove the proposition when $A$ is a $\Q$-algebra, which we assume from now on.

The left hand side of formula~\eqref{eq:res} is equal to zero if $\omega$ is exact. Therefore the left hand side of formula~\eqref{eq:res} with fixed $\phi$ defines an $A$-linear map from the quotient
$$
\widetilde{\Omega}^n_{\LL^n(A)}/d\widetilde{\Omega}^{n-1}_{\LL^n(A)}\stackrel{\sim}\longrightarrow A
$$
to $A$. Thus this map sends any differential form $\omega\in\widetilde{\Omega}^n_{\LL^n(A)}$ to $c\cdot\res(\omega)$, where $c\in A$ can be calculated for a particular differential form $\frac{dt_1}{t_1}\wedge\ldots\wedge\frac{dt_n}{t_n}$. Explicitly, we have
$$
c=\res\left(\phi\left(\frac{dt_1}{t_1}\wedge\ldots\wedge\frac{dt_n}{t_n}\right)\wedge\frac{dt_{q_1}}{t_{q_1}}\wedge\ldots\wedge\frac{dt_{q_{m-n}}}{t_{q_{m-n}}}\right)=
$$
$$
=\res\left(\frac{d\phi(t_1)}{\phi(t_1)}\wedge\ldots\wedge\frac{d\phi(t_n)}{\phi(t_n)}\wedge\frac{dt_{q_1}}{t_{q_1}}\wedge\ldots\wedge\frac{dt_{q_{m-n}}}{t_{q_{m-n}}}\right)\,.
$$
By~\cite[Prop.\,8.4]{GOMS}, we have that $c$ is the image in $A$ of the determinant of the \mbox{$(m\times m)$-matrix}
$$
\big(\Upsilon(\phi),\nu(t_{q_1}),\ldots,\nu(t_{q_{m-n}})\big)\in\M_{m\times m}\big(\uz(A)\big)\,.
$$
Obviously, this determinant equals $\sgn(\phi)d(\phi)$.
\end{proof}

\begin{corol}\label{cor:res}
Suppose that $m=n$. Then for any differential form $\omega\in\widetilde{\Omega}^n_{\LL^n(A)}$, we have
$$
\res\big(\phi(\omega)\big)=d(\phi)\res(\omega)\,.
$$
\end{corol}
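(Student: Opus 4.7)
The plan is to derive this as an immediate specialization of Proposition~\ref{prop:invres} to the case $m = n$. In that setting the complementary collection $(q_1, \ldots, q_{m-n})$ is empty, so the auxiliary wedge factor $\frac{dt_{q_1}}{t_{q_1}} \wedge \ldots \wedge \frac{dt_{q_{m-n}}}{t_{q_{m-n}}}$ in formula~\eqref{eq:res} is the empty wedge product, which by convention equals $1$. Consequently the left-hand side of~\eqref{eq:res} collapses to $\res(\phi(\omega))$.

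For the sign factor, I would invoke the remark made just before Proposition~\ref{prop:invres}: when $m = n$, the indices $p_1 < \ldots < p_n$ in $\uz(A)$ necessarily form the full sequence $(1, 2, \ldots, n)$ point-wise on $\Spec(A)$, since they are the row indices of the leading entries of the square upper-triangular matrix $\Upsilon(\phi) \in \M^+_{n\times n}\bigl(\uz(A)\bigr)$ with positive diagonal entries (Lemma~\ref{lem:upper}(ii)). Hence the locally constant permutation sending $(1, 2, \ldots, n)$ to $(p_1, \ldots, p_n, q_1, \ldots, q_{m-n}) = (1, 2, \ldots, n)$ is the identity, so $\sgn(\phi) = 1$.

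Substituting both observations into the equality of Proposition~\ref{prop:invres} yields $\res(\phi(\omega)) = d(\phi) \res(\omega)$, as required. There is no obstacle here; the content of the corollary is entirely in Proposition~\ref{prop:invres}, and this is just a matter of unpacking the definitions of $\sgn(\phi)$ and $d(\phi)$ in the square case, noting also that $d(\phi) = \prod_{i=1}^n x_{p_i, i} = \prod_{i=1}^n x_{i,i} = \det(\Upsilon(\phi))$, which matches the formula $d(\phi) = \det(\Upsilon(\phi))$ stated in the preamble to Proposition~\ref{prop:invres}.
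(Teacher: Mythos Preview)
Your proposal is correct and is precisely the intended argument: the paper states the corollary without proof immediately after Proposition~\ref{prop:invres}, having already noted in the preamble that when $m=n$ one has $\sgn(\phi)=1$ and $d(\phi)=\det\big(\Upsilon(\phi)\big)$. You have simply made explicit the trivial specialization (empty complementary index set, identity permutation) that the paper leaves to the reader.
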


\begin{rmk}\label{rmk:invres}
If $\phi$ is invertible, then by Corollary~\ref{cor:hommon} and Theorem~\ref{prop:contchange}, we have the equality $d(\phi)=1$. Thus by Corollary~\ref{cor:res}, for any differential form $\omega\in\widetilde{\Omega}^n_{\LL^n(A)}$, we have
$$
\res\big(\phi(\omega)\big)=\res(\omega)\,.
$$
\end{rmk}

\medskip

Proposition~\ref{prop:invres} has also the following application for injectivity of homomorphisms.

\begin{corol}\label{cor:inj}
Suppose that the image of $d(\phi)$ under the natural homomorphism $\uz(A)\to A$ is not a zero divisor in the ring $A$. Then the continuous homomorphism ${\phi\colon\LL^n(A)\to\LL^m(A)}$ is injective.
\end{corol}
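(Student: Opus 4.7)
The plan is to combine Proposition~\ref{prop:invres} with the (elementary) perfectness of the residue pairing
$$\LL^n(A) \times \widetilde{\Omega}^n_{\LL^n(A)} \lrto A, \qquad (f,\omega) \longmapsto \res(f\omega),$$
mentioned in the introduction. Specifically, given $f \in \LL^n(A)$ with $\phi(f) = 0$, I aim to show $\res(f\omega) = 0$ for every $\omega \in \widetilde{\Omega}^n_{\LL^n(A)}$, and then conclude $f = 0$ directly from this vanishing.

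First I would observe that, because $\phi$ is an $A$-algebra homomorphism, the induced map on top forms satisfies $\phi(f\omega) = \phi(f)\,\phi(\omega)$. Hence $\phi(f) = 0$ forces $\phi(f\omega) = 0$ for every $\omega$. Applying Proposition~\ref{prop:invres} to $f\omega$ in place of $\omega$ then yields
$$0 \;=\; \res\!\left(\phi(f\omega) \wedge \frac{dt_{q_1}}{t_{q_1}} \wedge \ldots \wedge \frac{dt_{q_{m-n}}}{t_{q_{m-n}}}\right) \;=\; \sgn(\phi)\,d(\phi)\,\res(f\omega)$$
in $A$. Since $\sgn(\phi) \in \uz(A)$ takes values in $\{\pm 1\}$, its image in $A$ is a unit (after, if necessary, decomposing $A$ into the finite product of rings on which $\sgn(\phi)$ is constant). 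Combined with the hypothesis that the image of $d(\phi)$ in $A$ is a non-zero-divisor, this allows us to cancel both factors and conclude $\res(f\omega) = 0$ for every $\omega$.

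It remains to deduce $f = 0$ from this vanishing. Writing $f = \sum_{l \in \z^n} a_l\,t^l$ and testing against $\omega_l := t_1^{-l_1-1}\ldots t_n^{-l_n-1}\,dt_1 \wedge \ldots \wedge dt_n$, the very definition of $\res$ gives $\res(f\omega_l) = a_l$, so every coefficient of $f$ vanishes and hence $f = 0$. There is no real obstacle in this argument: the substantive work has already been absorbed into Proposition~\ref{prop:invres}, and the only points requiring a moment's care are the unitness of $\sgn(\phi)$ in $A$ and the routine verification that the residue pairing separates elements of $\LL^n(A)$.
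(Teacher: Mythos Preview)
Your proof is correct and is essentially the contrapositive of the paper's own argument: the paper picks a single $\omega=f\,t^{-l}\frac{dt_1}{t_1}\wedge\ldots\wedge\frac{dt_n}{t_n}$ with $\res(\omega)\ne 0$ and uses Proposition~\ref{prop:invres} once to conclude $\phi(\omega)\ne 0$, hence $\phi(f)\ne 0$, while you run the same mechanism for all $\omega$ and then test against the monomials~$\omega_l$. Your handling of $\sgn(\phi)$ is slightly more explicit than the paper's, but the content is identical.
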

\begin{proof}
Let us show that for any non-zero element $f\in\LL^n(A)$, its image $\phi(f)$ is also non-zero. Consider an element $l\in\z^n$ such that the $l$-th coefficient of $f$ is non-zero. Define a differential form
$$
\omega:=f t^{-l}\,\frac{dt_1}{t_1}\wedge\ldots\wedge\frac{dt_n}{t_n}\in\widetilde{\Omega}^n_{\LL^n(A)}\,.
$$
By construction, we have that $\res(\omega)\ne 0$. Therefore by the condition of the corollary and by Proposition~\ref{prop:invres}, we have (in the notation of the proposition) that
${\res\left(\phi(\omega)\wedge\frac{dt_{q_1}}{t_{q_1}}\wedge\ldots\wedge\frac{dt_{q_{m-n}}}{t_{q_{m-n}}}\right)\ne 0}$. Hence $\phi(\omega)\ne 0$, and
therefore  $\phi(f)\ne 0$.
\end{proof}

\begin{rmk}\label{rmk:inj}
Suppose that $\phi(f)=0$ for an element $f\in\LL^n(A)$. Then all coefficients of~$f$ are nilpotent elements of $A$, which can be proven as follows. First one shows that for any closed (with respect to the topology on $\LL^n(A)$) prime ideal $I\subset\LL^n(A)$, there is a prime ideal $\p\subset A$ such that $I$ consists of all iterated Laurent series with coefficients in~$\p$. For this, one uses that if $A$ is a domain, then the localization ${(A\smallsetminus\{0\})^{-1}\cdot\LL^n(A)}$ is a field. Now one sees that any continuous homomorphism of \mbox{$A$-algebras} $\phi\colon\LL^n(A)\to\LL^m(A)$ induces a bijection between the sets of closed prime ideals, because these ideals are defined by prime ideals of $A$ as above. Hence $f$ belongs to the intersection of all closed prime ideals in~$\LL^n(A)$, as the analogous holds for the element $\phi(f)=0$ in $\LL^m(A)$. Finally, one uses that the intersection of all prime ideals in $A$ coincides with the nilradical of $A$.
\end{rmk}

\smallskip

It is an interesting question whether there exists a non-injective continuous homomorphism of $A$-algebras $\phi\colon\LL^n(A)\to\LL^m(A)$. By Corollary~\ref{cor:inj} and Remark~\ref{rmk:inj}, we see that for such $\phi$, the image of  $d(\phi) \in \uz(A)$ in $A$ should be a zero divisor in $A$, and if $\phi(f)=0$ for $f \in \LL^n(A)$, then all coefficients of $f$ are nilpotent elements of~$A$.

\section{Invertibility criterion}

We start this section with a self-duality of the topological $A$-module $\LL^n(A)$, see Proposition~\ref{prop:seldual}. Then we study the adjoint map $\phi^{\vee}$ to the map of $\LL^n(A)$-modules of top differential forms, where the last map is induced by a continuous homomorphism $\phi$, see Proposition~\ref{lem:leftinverse} and Remark~\ref{rmk:expladj}. Finally, we prove in Theorem~\ref{theor:inv} a criterion of invertibility of continuous endomorphisms of the $A$-algebra~$\LL^n(A)$.

\medskip

By $\LL^n(A)^{\vee}$ denote the \mbox{$A$-module} of all continuous $A$-linear maps $\LL^n(A)\to A$, where we consider the discrete topology on $A$. The $A$-module $\LL^n(A)^{\vee}$ has a natural topology such that $\LL^n(A)^{\vee}$ a topological group. The base of open neighborhoods of zero in $\LL^n(A)^{\vee}$ is given by annihilators of compact subsets of $\LL^n(A)$.

Given an iterated Laurent series~$f \in \LL^n(A)$  and an element $k\in\z^n$, by $[f]_k  \in A$ we denote the $k$-th coefficient of~$f$.

\begin{prop}\label{prop:seldual}
For any element $k\in\z^n$, the pairing
$$
\LL^n(A)\times\LL^n(A)\longrightarrow A\,,\qquad (f,g)\longmapsto [fg]_k\,,
$$
defines an isomorphism of topological $A$-modules
$$
\tau\,:\,\LL^n(A)\stackrel{\sim}\longrightarrow \LL^n(A)^{\vee}\,,\qquad f\longmapsto\big(g\mapsto [fg]_k\big)\,.
$$
\end{prop}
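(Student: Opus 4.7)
The plan is to establish $\tau$ as an $A$-module bijection first, then to match the topologies.  Throughout I will use the identity $\tau(f)(t^l) = [f]_{k-l}$, which follows directly from the definition of the pairing, together with the basic observation that any translate of a lexicographic cone $\z^n_\lambda$ by an element of $\z^n$ is again such a cone (proved by straightforward induction on $n$ from the structure of $\Lambda_n$).

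First I would verify that $\tau$ is well-defined and $A$-linear.  For $f \in \LL^n(A)$ with $\supp f \subset \z^n_\lambda$, I choose $\mu \in \Lambda_n$ with $-\z^n_\mu \supset k - \z^n_\lambda$.  Then for $g \in U_\mu$ no pair $(p,q) \in \supp f \times \supp g$ can satisfy $p+q = k$, because otherwise $q = k-p \in k - \z^n_\lambda \subset -\z^n_\mu$, contradicting $g \in U_\mu$; hence $[fg]_k = 0$, so $\ker\tau(f) \supset U_\mu$, and $\tau(f)$ is continuous into the discrete group $A$.  $A$-linearity is immediate.

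Injectivity of $\tau$ follows from $\tau(f)(t^{k-l}) = [f]_l$.  For surjectivity, given a continuous $A$-linear $\phi\colon\LL^n(A)\to A$, I would set $a_l := \phi(t^{k-l})$ and choose $\mu$ with $U_\mu \subset \ker\phi$.  Then $a_l = 0$ unless $l \in k + \z^n_\mu = \z^n_{\mu'}$ for some $\mu' \in \Lambda_n$, so $f := \sum a_l t^l$ genuinely lies in $\LL^n(A)$.  The continuous maps $\tau(f)$ and $\phi$ into discrete $A$ agree on all monomials, hence on the dense subspace of Laurent polynomials by Lemma~\ref{cor:conv}(ii), and therefore everywhere: their difference is a continuous map into a discrete group vanishing on a dense subset, whose preimage of $\{0\}$ is open and contains the dense set.

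For the homeomorphism, I observe that $\tau(U_\mu) = K_\mu^\perp$ where $K_\mu := \{t^l : l \in k + \z^n_\mu\} \cup \{0\}$ is compact: the countable family $\{t^l\}_{l \in k + \z^n_\mu}$ tends to zero in $\LL^n(A)$ by Lemma~\ref{lemma:intersect}, and given any open cover of $K_\mu$, a member $V \ni 0$ absorbs all but finitely many $t^l$; thus $\tau$ is open.  For continuity of $\tau^{-1}$, given compact $K \subset \LL^n(A)$, I would first show $X := \bigcup_{g \in K} \supp g \subset \z^n_\lambda$ for some $\lambda$: for each $\mu$ the continuous projection $K \to \LL^n(A)/U_\mu \cong A^{(-\z^n_\mu)}$ has finite image (compact into discrete), whose combined support equals $X \cap (-\z^n_\mu)$, so each such intersection is finite and Lemma~\ref{lemma:intersect} applies; then picking $\mu'$ with $-\z^n_{\mu'} \supset k - \z^n_\lambda$, the argument of the first step gives $U_{\mu'} \subset \tau^{-1}(K^\perp)$.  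This bounded-support property of compact subsets is the main obstacle, but once it is in hand via the discreteness of the quotients $\LL^n(A)/U_\mu$ combined with Lemma~\ref{lemma:intersect}, everything else is formal.
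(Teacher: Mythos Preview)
Your proof is correct and follows essentially the same approach as the paper's. The only cosmetic differences are that the paper first reduces to $k=0$ by multiplying by $t^k$, and for the openness of $\tau$ it uses as the compact witness the set of $0$--$1$ series supported in $\z^n_\lambda$ (compact by a Tychonoff-type argument) rather than your set of monomials $\{t^l : l \in k+\z^n_\mu\}\cup\{0\}$; both choices yield the same annihilator and the bounded-support argument for compact subsets via Lemma~\ref{lemma:intersect} is identical.
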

\begin{proof}
Recall that the product with a fixed iterated Laurent series is a continuous map. Applying to one of the arguments in the pairing the continuous automorphism given by the product with $t^k$, we see that it is enough to prove the proposition when $k=0$, which we assume from now on.

Since the map $\LL^n(A)\to A$, $f\mapsto [f]_0$, is continuous, $\tau$ is a well-defined $A$-linear map.

If $m\in\z^n$ is  an element such that the $m$-th coefficient of an iterated Laurent series $f\in\LL^n(A)$ is non-zero, then for $g:=t^{-m}$, we have that $[fg]_0\ne 0$. Hence the map~$\tau$ is injective.

Let $\chi\colon \LL^n(A)\to A$ be a continuous $A$-linear map. Then there is $\lambda\in\Lambda_m$ such that we have~$\chi(U_{\lambda})=0$. Now we put $a_l:=\chi(t^{-l})$ for $l \in \z^n$. It follows from the definition of $U_{\lambda}$ that $a_l=0$ if ${-l\notin (-\z^n_{\lambda})}$, that is, if ${l\notin \z^n_{\lambda}}$. Hence $f:=\sum\limits_{l\in\z^n}a_lt^l$ is a well-defined element of $\LL^n(A)$ with $\supp(f)\subset\z^n_{\lambda}$. We claim that for any $g\in\LL^n(A)$, we have $\chi(g)=[fg]_0$. Indeed, by continuity and $A$-linearity, it is enough to check this when $g$ is a monomial, which holds by construction of $f$. Thus we have shown that the map~$\tau$ is surjective.

Finally, let us prove that $\tau$ is a homeomorphism. Given an element $\lambda\in\Lambda_n$, by $K_{\lambda}$ denote the $A$-submodule of~$\LL^n(A)$ that consists of all elements $f\in\LL^n(A)$ with condition $\supp(f)\subset\z^n_{\lambda}$. Note that $K_{\lambda}$ is the closure in $\LL^n(A)$ of the submodule generated by the compact subset of $\LL^n(A)$ that consists of all series in $K_{\lambda}$ whose coefficient take only two values~$0$ and~$1$.

We claim that any compact subset~$C$ of~$\LL^n(A)$ is contained in~$K_{\lambda}$ for some $\lambda\in\Lambda_n$. Indeed, for any $\mu\in\Lambda_n$, the image of $C$ in the discrete \mbox{$A$-module~${\LL^n(A)/U_{\mu}}$} is a finite set whose elements are images of finite sums of monomials. Therefore condition~$(ii)$ of Lemma~\ref{lemma:intersect} holds for the subset of $\z^n$ which is the union of supports of all elements in~$C$. Thus Lemma~\ref{lemma:intersect} implies that $C$ is contained in $K_{\lambda}$ for some $\lambda\in\Lambda_n$.

Further, the map $\tau$ gives an isomorphism between the set $U_{\lambda}$ and the annihilator of the set $K_{\lambda}$. This proves that $\tau$ is a homeomorphism.
\end{proof}

Since we have an isomorphism ${\widetilde{\Omega}^n_{\LL^n(A)}\simeq \LL^n(A) dt_1\wedge\ldots\wedge dt_n}$, Proposition~\ref{prop:seldual} with ${k=(-1,\ldots,-1)}$ implies that the pairing
\begin{equation}\label{eq:pair}
\LL^n(A)\times\widetilde{\Omega}^n_{\LL^n(A)}\longrightarrow A\,,\qquad (f,\omega)\longmapsto \res(f\omega)\,,
\end{equation}
defines isomorphisms of topological $A$-modules
\begin{equation}\label{eq:dual}
{\LL^n(A)\stackrel{\sim}\longrightarrow\big(\widetilde{\Omega}^n_{\LL^n(A)}\big)^{\vee}}\,,
\qquad{\widetilde{\Omega}^n_{\LL^n(A)}\stackrel{\sim}\longrightarrow\LL^n(A)^{\vee}}\,.
\end{equation}
These isomorphisms are more useful than the isomorphisms from Proposition~\ref{prop:seldual}, because the isomorphisms~\ref{eq:dual} behave nicely under continuous endomorphisms of the \mbox{$A$-algebra~$\LL^n(A)$} as Proposition~\ref{lem:leftinverse} below claims.

\medskip

For any ${\phi\in\Hom^{\rm c,alg}_A\big(\LL^n(A),\LL^n(A)\big)}$, let ${\phi^{\vee}\in\Hom^{\rm c}_A\big(\LL^n(A),\LL^n(A)\big)}$ (see Definition~\ref{def:hom}) be the adjoint map to the continuous $A$-linear map ${\phi\colon\widetilde{\Omega}^n_{\LL^n(A)}\to \widetilde{\Omega}^n_{\LL^n(A)}}$ with respect to the pairing~\eqref{eq:pair} (see also the first  isomorphism from formula~\eqref{eq:dual}). Equivalently, for all $f\in\LL^n(A)$ and $\omega\in\widetilde{\Omega}^n_{\LL^n(A)}$, there is an equality
$$
\res\big(\phi^{\vee}(f)\,\omega\big)=\res\big(f\phi(\omega)\big)\,.
$$

\begin{rmk}\label{rmk:funvtadj}
One easily checks that the assignment $\phi\mapsto\phi^{\vee}$ is functorial with respect to~$A$, that is, given a homomorphism of rings $A\to B$, the following diagram is commutative:
$$
\begin{CD}
\Hom^{\rm c,alg}_A\big(\LL^n(A),\LL^n(A)\big) @>{\vee}>>  \Hom^{\rm c}_A\big(\LL^n(A),\LL^n(A)\big)  \\
 @VV V @VVV \\
\Hom^{\rm c,alg}_B\big(\LL^n(B),\LL^n(B)\big) @>{\vee}>> \Hom^{\rm c}_B\big(\LL^n(B),\LL^n(B)\big)\,,
\end{CD}
$$
where the horizontal maps are given by $\phi\mapsto \phi^{\vee}$
and the vertical maps are obtained by taking the extension of a continuous $A$-linear endomorphism of the \mbox{$A$-module}~$\LL^n(A)$
to a continuous $B$-linear endomorphism of the \mbox{$B$-module}~$\LL^n(B)$. This extension exists and it is unique by Proposition~\ref{prop:funccont} and Lemma~\ref{cor:conv}(ii)  (see also Corollary~\ref{cor:funccontrg} and Lemma~\ref{lem:mult}).
\end{rmk}

\begin{prop}\label{lem:leftinverse}
For any $\phi\in\Hom^{\rm c,alg}_A\big(\LL^n(A),\LL^n(A)\big)$, there is an equality (see Definition~\ref{def:ups} for $\Upsilon(\phi)$)
$$
\phi^{\vee}\circ\phi=\det\big(\Upsilon(\phi)\big)\,{\rm id}
$$
in $\Hom_A^{\rm c}\big(\LL^n(A),\LL^n(A)\big)$. In particular, if $\det\big(\Upsilon(\phi)\big)=1$, then $\phi^{\vee}\circ\phi={\rm id}$.
\end{prop}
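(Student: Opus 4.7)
The plan is to reduce this identity directly to the invariance of the residue, Corollary~\ref{cor:res}, by unwinding the definition of the adjoint map.

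First, fix arbitrary elements $g\in\LL^n(A)$ and $\omega\in\widetilde{\Omega}^n_{\LL^n(A)}$. By the defining property of $\phi^{\vee}$ applied to $f=\phi(g)$, we have
$$
\res\bigl(\phi^{\vee}(\phi(g))\,\omega\bigr)=\res\bigl(\phi(g)\,\phi(\omega)\bigr).
$$
Since $\phi$ is a continuous homomorphism of $A$-algebras and the induced map on $\widetilde{\Omega}^n_{\LL^n(A)}$ comes from the universal property of K\"ahler differentials, it is compatible with the $\LL^n(A)$-module structure in the sense that $\phi(g)\,\phi(\omega)=\phi(g\omega)$. Thus the right-hand side rewrites as $\res\bigl(\phi(g\omega)\bigr)$.

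Next, apply Corollary~\ref{cor:res} to the top differential form $g\omega\in\widetilde{\Omega}^n_{\LL^n(A)}$ to obtain
$$
\res\bigl(\phi(g\omega)\bigr)=d(\phi)\,\res(g\omega)=\res\bigl(d(\phi)\,g\cdot\omega\bigr),
$$
where the last equality uses that $d(\phi)\in\uz(A)$ maps to an element of $A$, and $\res$ is $A$-linear. Combining the two displays gives
$$
\res\bigl(\phi^{\vee}(\phi(g))\,\omega\bigr)=\res\bigl(d(\phi)\,g\cdot\omega\bigr)
$$
for all $\omega\in\widetilde{\Omega}^n_{\LL^n(A)}$.

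Finally, I invoke the non-degeneracy of the residue pairing from Proposition~\ref{prop:seldual} (equivalently, the first isomorphism in formula~\eqref{eq:dual}): the map $\LL^n(A)\to\bigl(\widetilde{\Omega}^n_{\LL^n(A)}\bigr)^{\vee}$ sending $h\mapsto\bigl(\omega\mapsto\res(h\omega)\bigr)$ is injective. Therefore the identity above forces $\phi^{\vee}(\phi(g))=d(\phi)\,g$ in $\LL^n(A)$. As $g$ was arbitrary, this yields the equality $\phi^{\vee}\circ\phi=\det\bigl(\Upsilon(\phi)\bigr)\,{\rm id}$ in $\Hom_A^{\rm c}\bigl(\LL^n(A),\LL^n(A)\bigr)$, and the special case $\det\bigl(\Upsilon(\phi)\bigr)=1$ is immediate. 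The only substantive ingredient is Corollary~\ref{cor:res}; there is no real obstacle beyond correctly bookkeeping the adjointness and the ring-homomorphism property of $\phi$.
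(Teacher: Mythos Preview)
Your proof is correct and follows essentially the same route as the paper's own argument: both apply the definition of $\phi^{\vee}$, use Corollary~\ref{cor:res} to rewrite $\res\bigl(\phi(g)\phi(\omega)\bigr)=\det\bigl(\Upsilon(\phi)\bigr)\res(g\omega)$, and then invoke the first isomorphism of~\eqref{eq:dual} to conclude. You simply spell out in more detail the intermediate step $\phi(g)\phi(\omega)=\phi(g\omega)$, which the paper leaves implicit.
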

\begin{proof}
By definition of $\phi^{\vee}$ and  Corollary~\ref{cor:res}, for all $f\in\LL^n(A)$ and $\omega\in\widetilde{\Omega}^n_{\LL^n(A)}$, we have the equalities
$$
\res\big((\phi^{\vee}\circ\phi)(f)\,\omega\big)=\res\big(\phi(f)\phi(\omega)\big)=\det\big(\Upsilon(\phi)\big)\res(f\omega)\,.
$$
Thus, the statement follows from the first of the isomorphisms~\eqref{eq:dual}.
\end{proof}

\begin{rmk}\label{rmk:expladj}
Here is an explicit formula for the adjoint map. Given an element ${\phi\in\Hom^{\rm c,alg}_A\big(\LL^n(A),\LL^n(A)\big)}$, put $\varphi_i:=\phi(t_i)$, where $1\leqslant i\leqslant n$, and define the Jacobian $J(\varphi)$ of the collection $\varphi=(\varphi_1,\ldots,\varphi_n)$ as the determinant of the matrix
$$
\left(\frac{\partial\varphi_i}{\partial t_j}\right)\in\M_{n\times n}\big(\LL^n(A)\big)\,.
$$
For short, denote $(1,\ldots,1)\in\z^n$ just by $1$. In particular, for $l=(l_1,\ldots,l_n)$, we put $l-1=(l_1-1,\ldots,l_n-1)$.
Then for any $f\in\LL^n(A)$, there is an equality
$$
\phi^{\vee}(f)=\mbox{$\sum\limits_{l\in\z^n}\res\big(f\varphi^{-l-1}J(\varphi)dt_1\wedge\ldots \wedge dt_n\big)t^l$}\,.
$$
Indeed, for any $l\in\z^n$, the $l$-th coefficient of $\phi^{\vee}(f)$ is equal to
$$
\res\big(\phi^{\vee}(f)t^{-l-1}dt_1\wedge\ldots\wedge dt_n\big)=
$$
$$
=\res\big(f\varphi^{-l-1}\phi(dt_1\wedge\ldots\wedge dt_n)\big)=\res\big(f\varphi^{-l-1}J(\varphi)dt_1\wedge\ldots \wedge dt_n\big)\,.
$$
\end{rmk}

\begin{rmk}
In general, the adjoint map $\phi^{\vee}$ is not necessarily a ring homomorphism. For example, when $n=1$ and $\phi(t)=\varphi=t^2$, Remark~\ref{rmk:expladj} implies that $\phi^{\vee}(t)=0$ and $\phi^{\vee}(t^2)=2t$. However, if $\det\big(\Upsilon(\phi)\big)=1$, then we obtain a posteriori from Theorem~\ref{theor:inv} below that $\phi^{\vee}$ is a ring homomorphism. Note that the proof of Theorem~\ref{theor:inv} is based on the theory of thick ind-cones from~\cite{GOMS}. We do not know how to deduce directly from the definition of the adjoint map that $\phi^{\vee}$ is a ring homomorphism provided that $\det\big(\Upsilon(\phi)\big)=1$.
\end{rmk}

\medskip

Now we pass to invertibility of continuous endomorphisms of the $A$-algebra $\LL^n(A)$. First we prove the following auxiliary statement.

\begin{lemma}\label{lemma:inv}
An endomorphism $\phi\in\Hom^{\rm c,alg}_A\big(\LL^n(A),\LL^n(A)\big)$ with $\det\big(\Upsilon(\phi)\big)=1$ is invertible if and only if there is an equality $\phi\circ\phi^{\vee}={\rm id}$ in $\Hom^{\rm c}_A\big(\LL^n(A),\LL^n(A)\big)$.
\end{lemma}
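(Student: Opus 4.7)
The proof should be essentially a formal consequence of Proposition~\ref{lem:leftinverse}, which under the hypothesis $\det\bigl(\Upsilon(\phi)\bigr)=1$ already gives $\phi^{\vee}\circ\phi=\mathrm{id}$ in $\Hom^{\rm c}_A\bigl(\LL^n(A),\LL^n(A)\bigr)$. So the whole content of the lemma is that, under this hypothesis, the identity $\phi\circ\phi^{\vee}=\mathrm{id}$ is equivalent to invertibility of $\phi$ in the monoid $\Hom^{\rm c,alg}_A\bigl(\LL^n(A),\LL^n(A)\bigr)$.

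For the ``only if'' direction, I would let $\psi\in\Hom^{\rm c,alg}_A\bigl(\LL^n(A),\LL^n(A)\bigr)$ be a two-sided inverse of $\phi$. Since $\psi$ is in particular a continuous $A$-linear map, I can work inside the monoid $\Hom^{\rm c}_A\bigl(\LL^n(A),\LL^n(A)\bigr)$ under composition. Composing the identity $\phi^{\vee}\circ\phi=\mathrm{id}$ from Proposition~\ref{lem:leftinverse} on the right with $\psi$ yields $\phi^{\vee}=\phi^{\vee}\circ\phi\circ\psi=\psi$. Consequently $\phi\circ\phi^{\vee}=\phi\circ\psi=\mathrm{id}$, as required.

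For the ``if'' direction, I would combine the assumption $\phi\circ\phi^{\vee}=\mathrm{id}$ with the equality $\phi^{\vee}\circ\phi=\mathrm{id}$ from Proposition~\ref{lem:leftinverse}. Together these show that $\phi$ is bijective as an additive self-map of $\LL^n(A)$, with set-theoretic inverse equal to $\phi^{\vee}$. Since $\phi$ is itself an $A$-algebra homomorphism, its set-theoretic inverse is automatically an $A$-algebra homomorphism as well (this is purely formal: the inverse of a bijective ring map preserves sums, products, and the unit). The continuity of $\phi^{\vee}$ is built into its construction as the adjoint with respect to the continuous pairing~\eqref{eq:pair}. Therefore $\phi^{\vee}$ lies in $\Hom^{\rm c,alg}_A\bigl(\LL^n(A),\LL^n(A)\bigr)$ and is a two-sided inverse of $\phi$ in this monoid, so $\phi$ is invertible.

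There is no real obstacle, since both directions reduce to elementary monoid arithmetic built on top of Proposition~\ref{lem:leftinverse}. The only point that deserves a line of care is the observation, used in the ``if'' direction, that the adjoint $\phi^{\vee}$ acquires its multiplicative structure \emph{a posteriori} from being the bijective inverse of a ring homomorphism, even though in general (as the remark after Proposition~\ref{lem:leftinverse} illustrates) $\phi^{\vee}$ need not respect products on its own.
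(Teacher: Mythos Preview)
Your proof is correct and follows essentially the same approach as the paper: both directions are reduced to Proposition~\ref{lem:leftinverse} via elementary monoid arithmetic, with the ``if'' direction concluding from bijectivity of $\phi$ that its set-theoretic inverse $\phi^{\vee}$ is automatically a continuous $A$-algebra homomorphism. Your write-up is merely more explicit than the paper's in spelling out why $\phi^{\vee}$ inherits the ring structure and continuity.
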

\begin{proof}
Suppose that $\phi$ is invertible. Then by Proposition~\ref{lem:leftinverse}, we have that $\phi^{\vee}=\phi^{-1}$, whence $\phi\circ\phi^{\vee}={\rm id}$.

Now suppose that $\phi\circ\phi^{\vee}={\rm id}$. Then Proposition~\ref{lem:leftinverse} implies that $\phi$ is a bijection from~$\LL^n(A)$ to itself, whence $\phi$ is invertible in~$\Hom^{\rm c,alg}_A\big(\LL^n(A),\LL^n(A)\big)$.
\end{proof}

Notice that Theorem~\ref{theor:inv} below claims that, in fact, the equivalent conditions of Lemma~\ref{lemma:inv} hold for any $\phi\in\Hom^{\rm c,alg}_A\big(\LL^n(A),\LL^n(A)\big)$ with $\det\big(\Upsilon(\phi)\big)=1$.

\begin{lemma}\label{lemma:auxil}
Any endomorphism $\phi\in\Ker(\Upsilon)_n^{\rm nil}(A)$ is invertible.
\end{lemma}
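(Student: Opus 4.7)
My plan is to construct an explicit two-sided inverse $\psi\in\Ker(\Upsilon)_n^{\rm nil}(A)$ for $\phi$ by a Picard-type iteration, exploiting the nilpotence of the coefficients of the perturbations $h_i$.

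Write $\phi(t_i)=t_i+h_i$, where each $h_i$ is a Laurent polynomial in $t_1,\ldots,t_n$ all of whose coefficients are nilpotent in $A$. Since only finitely many coefficients are involved, the ideal $I\subset A$ they generate is itself nilpotent; fix $N\geqslant 1$ with $I^N=0$. Set $F^k:=I^k\LL^n(A)$; then $F^k\cdot F^{\ell}\subset F^{k+\ell}$ and $F^N=0$, so every element of $F^1$ is nilpotent in $\LL^n(A)$. Define a sequence inductively by $\psi_i^{(0)}:=t_i$ and
$$
\psi_i^{(k+1)}:=t_i-h_i\bigl(\psi_1^{(k)},\ldots,\psi_n^{(k)}\bigr)\,.
$$
By induction $\psi_i^{(k)}-t_i\in F^1$, so $\psi_i^{(k)}=t_i(1+\epsilon_i^{(k)})$ with $\epsilon_i^{(k)}\in F^1$ nilpotent; hence $\psi_i^{(k)}\in\LL^n(A)^*$, $\nu(\psi_i^{(k)})=\nu(t_i)$, and $(\psi_i^{(k)})^{-1}=t_i^{-1}\sum_{r\geqslant 0}(-\epsilon_i^{(k)})^r$ is a Laurent polynomial (the geometric series terminates by nilpotence) whose non-leading coefficients are nilpotent. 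This makes the substitutions defining the iteration unambiguous and keeps each $\psi_i^{(k)}-t_i$ a Laurent polynomial with nilpotent coefficients.

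The key estimate is $\psi_i^{(k+1)}-\psi_i^{(k)}\in F^{k+1}$, proved by induction on $k\geqslant 0$. For $k=0$ this is $-h_i\in F^1$. For the step, set $\delta_s:=\psi_s^{(k)}-\psi_s^{(k-1)}\in F^k$ and compute, for each integer $l$,
$$
(\psi_s^{(k)})^l-(\psi_s^{(k-1)})^l=(\psi_s^{(k-1)})^l\sum_{r\geqslant 1}\binom{l}{r}\bigl(\delta_s(\psi_s^{(k-1)})^{-1}\bigr)^r\in F^k,
$$
where the sum is finite because $\delta_s(\psi_s^{(k-1)})^{-1}\in F^k$ and its $r$-th power lies in $F^{kr}$, vanishing once $kr\geqslant N$. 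Telescoping across the coordinates $s$ (and using that $F^k$ is an ideal of $\LL^n(A)$) gives $\prod_s(\psi_s^{(k)})^{l_s}-\prod_s(\psi_s^{(k-1)})^{l_s}\in F^k$ for every multi-exponent. Multiplying by a coefficient of $h_i$, which lies in $I$, produces an element of $F^{k+1}$, and summing over the finitely many monomials in $h_i$ completes the induction.

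Since $F^N=0$, the iteration stabilizes, so $\psi_i:=\psi_i^{(N)}$ satisfies the fixed-point identity $\psi_i=t_i-h_i(\psi_1,\ldots,\psi_n)$. By Theorem~\ref{prop:contchange} the collection $(\psi_1,\ldots,\psi_n)$ (whose valuations form the identity matrix) determines a unique element $\psi\in\Ker(\Upsilon)_n^{\rm nil}(A)$, and by Corollary~\ref{cor:monoidphi} we get $(\phi\circ\psi)(t_i)=\phi(t_i)|_{t_j\mapsto\psi_j}=\psi_i+h_i(\psi)=t_i$, so $\phi\circ\psi={\rm id}$. Applying the same construction to $\psi$ yields some $\psi'\in\Ker(\Upsilon)_n^{\rm nil}(A)$ with $\psi\circ\psi'={\rm id}$; then $\phi=\phi\circ\psi\circ\psi'=\psi'$, so $\psi\circ\phi={\rm id}$ as well, and $\phi$ is invertible with inverse~$\psi$. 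The main obstacle is the inductive filtration estimate, and in particular controlling negative powers $(\psi_s^{(k)})^l$ inside the substitution: one must verify that the geometric-type expansions terminate (by nilpotence of $F^1$) and that the resulting differences actually land in $F^k$, which is precisely where the assumption that each $h_i$ is a Laurent \emph{polynomial} with nilpotent coefficients, and not merely a general Laurent series, is used in an essential way.
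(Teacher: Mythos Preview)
Your argument is correct in substance and gives a genuinely different proof from the paper's. The paper does not construct the inverse on the level of parameters at all: it observes that the reduction of $\phi$ modulo $I=\a\,\LL^n(A)$ is the identity, writes $\phi=\mathrm{id}+h$ as continuous $A$-linear maps with $h\big(\LL^n(A)\big)\subset I$, checks $h(I^k)\subset I^{k+1}$, and concludes that $h$ is a nilpotent additive endomorphism, so $\phi=\mathrm{id}+h$ is invertible via the finite geometric series $\sum_k(-h)^k$. Your Picard iteration is the coordinate-wise incarnation of the same nilpotent-filtration idea: the paper inverts $\mathrm{id}+h$ abstractly, while you solve the fixed-point equation $\psi_i=t_i-h_i(\psi)$ explicitly. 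Your route produces the inverse parameters $(\psi_1,\ldots,\psi_n)$ directly and stays inside $\Ker(\Upsilon)_n^{\rm nil}$, at the cost of tracking the filtration estimate on monomials (including negative exponents); the paper's route is shorter but less explicit.

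One notational slip: in the paper's conventions (see Corollary~\ref{cor:monoidphi}), substituting $t_j\mapsto\psi_j$ into $\phi(t_i)$ computes $(\psi\circ\phi)(t_i)$, not $(\phi\circ\psi)(t_i)$. So what your fixed-point identity actually yields is $\psi\circ\phi=\mathrm{id}$; applying the construction to $\psi$ then gives $\psi'$ with $\psi'\circ\psi=\mathrm{id}$, whence $\psi'=\psi'\circ\psi\circ\phi=\phi$ and $\phi\circ\psi=\mathrm{id}$. Your two-sided inverse argument survives unchanged once the order is corrected.
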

\begin{proof}
By the definition of $\Ker(\Upsilon)^{\rm nil}_n$, for any $i$, $1\leqslant i\leqslant n$, we have $\phi(t_i)=t_i+h_i$, where~$h_i$ is a Laurent polynomial with coefficients being nilpotent elements of~$A$. Let $\a\subset A$ be the ideal generated by all coefficients of $h_i$, $1\leqslant i\leqslant n$. Define also the ideal $I:=\a\,\LL^n(A)$ in~$\LL^n(A)$.

Clearly, the image of $\phi$ under the natural map
$$
\Hom^{\rm c,alg}_A\big(\LL^n(A),\LL^n(A)\big)\longrightarrow \Hom^{\rm c,alg}_{A/\a}\big(\LL^n(A/\a),\LL^n(A/\a)\big)
$$
coincides with the identity. Since $\a$ is finitely generated, we have an isomorphism $\LL^n(A)/I\simeq \LL^n(A/\a)$. Therefore we have that $\phi={\rm id}+h$, where $h\colon\LL^n(A)\to\LL^n(A)$ is a continuous $A$-linear map such that $h\big(\LL^n(A)\big)\subset I$.

Moreover, we claim that $h(I^k)\subset I^{k+1}$ for any $k\geqslant 0$. Indeed, we have $I^k=\a^k\,\LL^n(A)$ and for all $a\in \a^k$ and $f\in\LL^n(A)$ we have
$h(af)=ah(f)\in I^{k+1}$, because $h(f)\in I$.

 We have that $I$ is a nilpotent ideal, because  $I$ is finitely generated by nilpotent elements. Therefore  we see that the map $h$ is nilpotent. Hence the map $\phi={\rm id}+h$ is invertible.
\end{proof}

\medskip

Here is the main result of this section.

\begin{theor}\label{theor:inv}
A continuous endomorphism of the $A$-algebra $\phi\colon\LL^n(A)\to\LL^n(A)$ is invertible if and only if the matrix ${\Upsilon(\phi)\in\M^+_{n\times n}\big(\uz(A)\big)}$ is invertible, that is, the upper-triangular matrix with (locally constant on $\Spec(A)$) integral entries
$$
\Upsilon(\phi)=\big(\nu(\phi(t_1)),\ldots,\nu(\phi(t_n))\big)
$$
has units on the diagonal. Moreover, if $\phi$ is invertible, then we have the equality
$$
\phi^{-1}=\phi^{\vee}\,.
$$
\end{theor}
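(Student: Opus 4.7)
The plan is as follows. The ``only if'' direction is immediate from Corollary~\ref{cor:hommon}: invertibility of $\phi$ forces $\Upsilon(\phi)$ to be invertible in $\M_{n\times n}(\uz(A))$, and since $\Upsilon(\phi)$ is upper triangular with positive (locally constant integral) diagonal entries, invertibility amounts to each diagonal entry being equal to~$1$. For the ``if'' direction I assume $\det(\Upsilon(\phi))=1$. Proposition~\ref{lem:leftinverse} already yields $\phi^\vee\circ\phi=\mathrm{id}$, and by Lemma~\ref{lemma:inv} both the invertibility of~$\phi$ and the formula $\phi^{-1}=\phi^\vee$ reduce to establishing the complementary identity $\phi\circ\phi^\vee=\mathrm{id}$.

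I would first reduce to the case $\Upsilon(\phi)=\mathrm{id}$, that is, to $\phi\in\Ker(\Upsilon)_n(A)$. Setting $M:=\Upsilon(\phi)$ and letting $\phi_M$ be the image of $M$ under the monoid section of Corollary~\ref{cor:section}, one observes that $M$ is upper triangular unipotent, hence invertible in $\M_{n\times n}^+(\uz(A))$ with inverse $M^{-1}$ of the same shape; the monoid property of the section then gives $\phi_M^{-1}=\phi_{M^{-1}}$. Writing $\psi:=\phi_M^{-1}\circ\phi$, Corollary~\ref{cor:hommon} shows $\Upsilon(\psi)=\mathrm{id}$, so $\psi\in\Ker(\Upsilon)_n(A)$, and $\phi$ is invertible if and only if $\psi$ is.

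The crucial step is then to recast $\psi\circ\psi^\vee=\mathrm{id}$ as the vanishing of a family of regular functions on the ind-scheme $\Ker(\Upsilon)_n\times L^n\ga$: for each $k\in\z^n$, the map $(\psi,f)\mapsto[\psi(\psi^\vee(f))-f]_k$ is a morphism to~$\ga$, by Proposition~\ref{prop:repr} together with the functoriality recorded in Remark~\ref{rmk:funvtadj}. Corollary~\ref{cor:repr}(ii) permits testing this vanishing after restriction to the subfunctor $\Ker(\Upsilon)_n^{\rm nil}\times L^n\ga$, where Lemma~\ref{lemma:auxil} directly asserts that $\psi$ is invertible; Proposition~\ref{lem:leftinverse} then forces $\psi^\vee=\psi^{-1}$ and hence $\psi\circ\psi^\vee=\mathrm{id}$.

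The main obstacle is that a left inverse $\phi^\vee\circ\phi=\mathrm{id}$ does not, in the infinite-dimensional topological setting of $\LL^n(A)$, automatically yield a right inverse. Precisely this gap is bridged by the representability result Proposition~\ref{prop:repr} together with the density statement Corollary~\ref{cor:repr}(ii), which in turn rest on the thick ind-cone machinery of~\cite{GOMS}; only after the reduction to the nilpotent locus does the identity become accessible through the elementary nilpotent expansion underlying Lemma~\ref{lemma:auxil}.
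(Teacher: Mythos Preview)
Your proposal is correct and follows essentially the same approach as the paper: the forward implication via Corollary~\ref{cor:hommon}, the reduction to $\Ker(\Upsilon)_n$ via the monoid section of Corollary~\ref{cor:section}, the reformulation of $\phi\circ\phi^\vee=\mathrm{id}$ as the vanishing of regular functions on $\Ker(\Upsilon)_n\times L^n\ga$, the passage to $\Ker(\Upsilon)_n^{\rm nil}$ via Corollary~\ref{cor:repr}(ii), and the conclusion via Lemma~\ref{lemma:auxil} all match the paper's argument step for step.
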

\begin{proof}
By Corollary~\ref{cor:hommon} and Theorem~\ref{prop:contchange}, one implication is clear. Now suppose that the matrix $\Upsilon(\phi)$ is invertible and let us show the invertibility of~$\phi$. By Corollary~\ref{cor:section}, it is enough to consider the case when $\phi\in\Ker(\Upsilon)_n(A)$, that is, the case when $\Upsilon(\phi)$ is the identity matrix, which we assume from now on.

By Lemma~\ref{lemma:inv}, we need to show that that for any $\phi\in\Ker(\Upsilon)_n(A)$, there is an equality $\phi\circ\phi^{\vee}={\rm id}$ in~$\Hom^{\rm c}_A\big(\LL^n(A),\LL^n(A)\big)$. In other words, we need to show that for all $\phi\in\Ker(\Upsilon)_n(A)$ and $f\in\LL^n(A)$, there is an equality
\begin{equation}\label{eq:main}
(\phi\circ\phi^{\vee})(f)-f=0
\end{equation}
in $\LL^n(A)$. It follows from Remark~\ref{rmk:funvtadj} that for each $l\in\z^n$, the $l$-th coefficient of the iterated Laurent series obtained in the left hand side of~\eqref{eq:main} is given by a regular function $\xi_l\in\OO\big(\Ker(\Upsilon)_n\times L^n\ga\big)$. Therefore equality~\eqref{eq:main} is equivalent to countably many equalities $\xi_l=0$, where $l\in\z^n$. Hence by Corollary~\ref{cor:repr}$(ii)$, we may assume that $\phi\in\Ker(\Upsilon)^{\rm nil}_n(A)$.

Finally, by Lemma~\ref{lemma:auxil}, any $\phi\in\Ker(\Upsilon)_n^{\rm nil}(A)$ is invertible, whence again by Lemma~\ref{lemma:inv}, we have the equality $\phi\circ\phi^{\vee}={\rm id}$. This finishes the proof.
\end{proof}

\medskip

Note that since $\phi^{-1}=\phi^{\vee}$, we obtain the explicit formula for the inverse automorphism by Remark~\ref{rmk:expladj}. In particular, for all $\varphi=(\varphi_1,\ldots,\varphi_n)\in\Hb_{n,n}(A)$ and $f\in\LL^n(A)$, there is an equality
\begin{equation}  \label{eq:identity}
f=\mbox{$\sum\limits_{l\in\z^n}\res\big(f\varphi^{-l-1}J(\varphi)dt_1\wedge\ldots \wedge dt_n\big)\varphi^l$}\,,
\end{equation}
which corresponds to the equality ${f=(\phi\circ\phi^{-1})(f)=(\phi\circ\phi^{\vee})(f)}$.
It is unclear how to prove equality~\eqref{eq:identity} directly, without using the theory of thick ind-cones developed in~\cite{GOMS}. Note that this would give a different explicit proof of Theorem~\ref{theor:inv}.

\medskip

\begin{rmk}
\hspace{0cm}
\begin{itemize}
\item[(i)]
Let $n=1$ and let ${\varphi=\sum\limits_{l\in\z}a_lt^l\in\LL(A)}$ be a Laurent series such that ${\nu(\varphi)=1}$. One checks easily that the derivative ${\partial\varphi/\partial t\in\LL(A)}$ is invertible. Applying formula~\eqref{eq:identity} with ${f=\varphi(\partial\varphi/\partial t)^{-1}t^{-1}}$, we obtain the equality
\begin{equation}\label{eq:rmkViktor}
\mbox{$\sum\limits_{l\in\z}[\varphi^{-l}]_0\,\varphi^l=\varphi\,(\partial\varphi/\partial t)^{-1}\,t^{-1}$}\,.
\end{equation}
\item[(ii)]
Suppose that $A=k$ is a field of zero characteristic. In this case, it is well-known and is easy to show that a Laurent series $\varphi\in k((t))$ with $\nu(\varphi)=1$ defines an automorphism of~$k((t))$ (cf. Theorem~\ref{theor:inv}) and that the residue map is invariant under such automorphism, see, e.g.~\cite{S} (cf. Proposition~\ref{prop:invres} and Remark~\ref{rmk:invres}). By Lemma~\ref{lemma:inv}, this proves formulas~\eqref{eq:identity} and~\eqref{eq:rmkViktor} in this case without using the theory of thick-ind cones involved in the proofs of Theorem~\ref{theor:inv} and Proposition~\ref{prop:invres}. Moreover, positive powers of~$\varphi$ have a zero constant term, whence formula~\eqref{eq:rmkViktor} becomes
$$
\mbox{$1+\sum\limits_{l\geqslant 1}[\varphi^{-l}]_0\,\varphi^l=\varphi\,(\partial\varphi/\partial t)^{-1}\,t^{-1}$}\,.
$$
Also, $\varphi=(\partial\varphi/\partial t)t$ only for $\varphi=ct$, where $c\in k^*$. Thus after replacing $\varphi$ by its inverse, we obtain the following statement.

Let $F\in\LL(k)$ be a Laurent series such that $\nu(F)=-1$ and $F\ne ct^{-1}$, where $c\in k^*$. Then the generating series of constant terms of powers of $F$, namely, the series ${\sum\limits_{l\geqslant 1}[F^l]_0\,t^l}$, is not equal to zero. Note that when $F$ is also a Laurent polynomial, this is a particular case of~\cite[Theor.\,2]{DK}.
\end{itemize}
\end{rmk}

\bigskip

Sergey Gorchinskiy

Steklov Mathematical Institute of Russian Academy of Sciences, ul. Gubkina 8, Moscow, 119991 Russia

{\em E-mail address}: gorchins@mi.ras.ru

\bigskip

Denis Osipov

Steklov Mathematical Institute of Russian Academy of Sciences, ul. Gubkina 8, Moscow, 119991 Russia

National University of Science and Technology MISIS, Leninskii pr. 4, Moscow, 119049 Russia

{\em E-mail address}: d\_osipov@mi.ras.ru

\end{document}